\DeclareMathAlphabet{\pazocal}{OMS}{zplm}{m}{n}
\definecolor{ao(english)}{rgb}{0.0, 0.5, 0.0}
\definecolor{antiquefuchsia}{rgb}{0.57, 0.36, 0.51}
\definecolor{awesome}{rgb}{1.0, 0.13, 0.32}
\newtheorem{theorem}{Theorem}[section]
\newtheorem{lemma}{Lemma}[section]
\newtheorem{cor}{Corollary}[section]
\newtheorem{prop}{Proposition}[section]
\newtheorem{definition}{Definition}[section]
\newtheorem{example}{Example}[section]
\theoremstyle{remark}
\newtheorem{remark}{Remark}[section]
\numberwithin{equation}{section}
\newcommand{\R}{\mathbb{R}}
\renewcommand{\bar}{\overline}
\begin{document}

\title[Volume properties and rigidity on self-expanders]{Volume properties and rigidity on self-expanders   of mean curvature flow }

\date{}

 \subjclass[2000]{Primary: 53C42;
Secondary: 58J50}

\thanks{The first author is supported by CAPES of Brasil.  The second author  is partially supported by CNPq and Faperj of Brazil.}

\address{Instituto de Matem\'atica e Estat\'\i stica, Universidade Federal Fluminense,
Niter\'oi, RJ 24020, Brazil}

\author[Saul Ancari]{saul Ancari}

\email{sa\_ancari@id.uff.br}

\author[Xu Cheng]{Xu Cheng}

\email{xucheng@id.uff.br}

\newcommand{\M}{\mathcal M}

\begin{abstract} In this paper, we mainly study   immersed self-expander hypersurfaces  in   Euclidean space whose mean curvatures have some linear growth controls. We discuss  the volume growths and the finiteness  of  the weighted volumes. We prove some theorems that characterize the hyperplanes through the origin as self-expanders.  We  estimate 
upper bound of the bottom of the spectrum of the drifted Laplacian. We also  give the  upper and lower bounds for  the bottom  of the spectrum of the $L$-stability operator and discuss the $L$-stability of some special self-expanders. Besides, we prove that the surfaces  $\Gamma\times\mathbb{R}$  with the product metric are the  only  complete self-expander surfaces immersed  in $\mathbb{R}^3$ with constant scalar curvature,  where $\Gamma$ is a complete self-expander curve (properly) immersed in $\mathbb{R}^2$.

\end{abstract}

\maketitle
\section{introduction}\label{introduction}

In this article we  study self-expanders that are self-expanding solutions for mean curvature flows.
An $n$-dimensional smooth  immersed  submanifold  $\Sigma$  in the Euclidean space $\R^{m}$ is called self-expander if   its  mean curvature vector $ {\bf H}$  satisfies the equation
\begin{equation}{\bf H}=\dfrac12x^{\perp},
\end{equation}
where $x^{\perp}$ denotes the normal component of the position vector $x$.\\

Equivalently, $\Sigma$ is a self-expander if and only if $\sqrt{t}\Sigma, t\in (0,\infty)$ is a mean curvature flow (MCF).\\

Self-expanders have a very  important role  in the study of  MCF. They describe the asymptotic longtime behavior for MCF  and the local structure of MCF after the singularities in the very short time.  In \cite{EH}, Ecker and Huisken  studied MCF evolutions of entire graphical immersions. Under some assumptions on the initial hypersurface at infinity,  they showed that the solution of MCF  exists for all times $t>0$ and  converges to a self-expander.  Stavrou \cite{S} later proved the same result under weaker hypotheses that the initial hypersurface  has a unique tangent cone at infinity. Self-expanders also appears in  the mean curvature evolution of cones.  In \cite{I}, Ilmanen studied the existence of E-minimizing self-expanding hypersurfaces  which converge to prescribed closed cones at infinity. In \cite{D},  Ding  studied self-expanders and their relationship to minimal cones in Euclidean space.    Recently, Bernstein and Wang (\cite{BW1} and \cite{BW2}) obtained various results on asymptotically conical self-expanders. There are other works in self-expanders. See, for instance, \cite{AIC}, \cite{FM}, \cite{smoczyk2020self}, etc.\\
 
Recently in \cite{cheng2018}, the second author of the present paper and Zhou studied  some properties of complete properly immersed  self-expanders. Especially, they proved the discreteness of the spectrum of the drifted operator $\mathcal{L}=\Delta+\dfrac12\langle x,\nabla \cdot\rangle $. In the  case of self-expander hypersurfaces,    they gave the lower bound estimate for the  first eigenvalue  $\lambda_1$  of the operator $\mathcal{L}$   and also  proved that the bottom $\mu_1$ of the spectrum of the   stability operator $L=\mathcal{L}+|A|^2-\frac12$ satisfies $\mu_1\leq \lambda_1+\frac12$ (\cite[Theorems 1.3, 1.5]{cheng2018}). Besides, they proved the uniqueness of hyperplanes through the origin for mean convex self-expanders under some integrability conditions on the square of the norm of the second fundamental form (\cite[Theorems 1.4]{cheng2018}).\\

 Motivated by the work in \cite{cheng2018}, in the present  paper we  study the topics discussed in
  \cite{cheng2018}. One of our  strategies is  to make use of  the properties on the finiteness of  weighted volumes and the volume growth upper estimate for self-expanders  with some restriction on mean curvature. In order to prove these properties, 
we first prove   Theorem \ref{tstimat} on   a  general Riemannian manifold which generalizes Theorem 5 in \cite{AM} and is also of independent interest.
Then  we apply Theorem \ref{tstimat} to  self-expanders  in $\mathbb{R}^m$ and  obtain the following result: 

\begin{theorem}\label{ineq5}
	Let $\Sigma$ be a complete $n$-dimensional properly immersed   self-expander   in $\R^m$, $n<m$.  Assume  that  its mean curvature vector $\bf{H}$ satisfies  $|{\bf H}|(x)\leq a|x|+b$, $x\in \Sigma$, for  some constants $0\leq a<\frac{1}{2}$ and  $b>0$. Then it holds that, for any  $\alpha >\frac{4a^2}{1-4a^2},$
	\begin{itemize}
		\item[(i)] $\int_{\Sigma}e^{-\frac{\alpha}{4}|x|^2}d\sigma<\infty.$
		\item[(ii)] The volume of $B_r(0)\cap\Sigma$ satisfies 
		\begin{align*}
		 Vol(B_r(0)\cap\Sigma)\leq C(\alpha)e^{\frac{\alpha }{4}r^2},
		\end{align*}
	\end{itemize}
	 where $B_r(0)$ denotes  the  round ball in $\mathbb{R}^m$ of radius $r$ centered at the origin  $0\in \mathbb{R}^m$. 
	 
	In particular,  if  $0\leq a<\frac{1}{2\sqrt{2}}$,  then  the Gaussian weighted volume is finite, that is,
	\begin{align}\label{ps}
	\int_{\Sigma}e^{-\frac{|x|^2}{4}}d\sigma<\infty.
	\end{align} 
	
\end{theorem}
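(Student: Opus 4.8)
The plan is to reduce the whole statement to a single pointwise differential inequality for the function $|x|^2$ on $\Sigma$ and then feed that inequality into Theorem \ref{tstimat}. First I would record the two standard facts for an immersed submanifold. Since the induced Laplacian of the position vector equals the mean curvature vector, $\Delta x=\mathbf H$, one has $\Delta|x|^2=2n+2\langle x,\mathbf H\rangle$. The self-expander equation $\mathbf H=\tfrac12x^{\perp}$ gives $\langle x,\mathbf H\rangle=\tfrac12|x^{\perp}|^2$ and $|x^{\perp}|=2|\mathbf H|$, so
\[
\Delta|x|^2=2n+|x^{\perp}|^2=2n+4|\mathbf H|^2 .
\]

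Next I would pass to the drift operator naturally attached to the weight $e^{-\frac{\alpha}{4}|x|^2}$, namely $\mathcal L_\alpha u=\Delta u-\tfrac{\alpha}{2}\langle x,\nabla u\rangle$, for which that weight is the invariant measure (so that $\operatorname{div}(e^{-\frac{\alpha}{4}|x|^2}\nabla u)=e^{-\frac{\alpha}{4}|x|^2}\mathcal L_\alpha u$). Using $\nabla|x|^2=2x^{\top}$ and $|x^{\top}|^2=|x|^2-|x^{\perp}|^2$, a short computation yields
\[
\mathcal L_\alpha|x|^2=2n+(1+\alpha)|x^{\perp}|^2-\alpha|x|^2 .
\]
Inserting the hypothesis $|\mathbf H|\le a|x|+b$, so that $|x^{\perp}|^2=4|\mathbf H|^2\le 4(a|x|+b)^2$, I get
\[
\mathcal L_\alpha|x|^2\le\big(4(1+\alpha)a^2-\alpha\big)|x|^2+8(1+\alpha)ab\,|x|+\big(2n+4(1+\alpha)b^2\big).
\]
The decisive point is that the leading coefficient $4(1+\alpha)a^2-\alpha$ is negative precisely when $\alpha>\frac{4a^2}{1-4a^2}$ (here $a<\tfrac12$ keeps the denominator positive), which is exactly the admissible range in the statement. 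Thus for such $\alpha$ there exist $\delta>0$ and $C_0\in\R$ with $\mathcal L_\alpha|x|^2\le-\delta|x|^2+C_0$ on all of $\Sigma$.

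With this inequality I would apply Theorem \ref{tstimat} to the proper exhaustion function $|x|^2$ to deduce part (i), $\int_\Sigma e^{-\frac{\alpha}{4}|x|^2}\,d\sigma<\infty$; properness, hence compactness of $B_r(0)\cap\Sigma$, is exactly where the properly immersed hypothesis is used. If one wants a self-contained derivation, integrate $\operatorname{div}(e^{-\frac{\alpha}{4}|x|^2}\nabla|x|^2)$ over $B_r(0)\cap\Sigma$ for a regular value $r$: the boundary term is nonnegative because the outward conormal points along $x^{\top}$, giving $\delta\int_{B_r\cap\Sigma}|x|^2e^{-\frac{\alpha}{4}|x|^2}\le C_0\int_{B_r\cap\Sigma}e^{-\frac{\alpha}{4}|x|^2}$; splitting the integral at a fixed radius $R_0$ with $\delta R_0^2>2C_0$ then yields $\int_{B_r\cap\Sigma}e^{-\frac{\alpha}{4}|x|^2}\le 2\,\mathrm{Vol}(B_{R_0}(0)\cap\Sigma)$ uniformly in $r$, and letting $r\to\infty$ gives (i). Part (ii) is then immediate, since on $B_r(0)\cap\Sigma$ one has $e^{-\frac{\alpha}{4}|x|^2}\ge e^{-\frac{\alpha}{4}r^2}$, whence $\mathrm{Vol}(B_r(0)\cap\Sigma)\le e^{\frac{\alpha}{4}r^2}\int_\Sigma e^{-\frac{\alpha}{4}|x|^2}\,d\sigma$, i.e.\ the claimed bound with $C(\alpha)=\int_\Sigma e^{-\frac{\alpha}{4}|x|^2}\,d\sigma$. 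Finally the ``in particular'' assertion follows because $a<\frac{1}{2\sqrt2}$ is equivalent to $\frac{4a^2}{1-4a^2}<1$, so $\alpha=1$ is admissible and (i) with $\alpha=1$ is exactly finiteness of the Gaussian weighted volume $\int_\Sigma e^{-\frac{|x|^2}{4}}\,d\sigma$.

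The computation of $\mathcal L_\alpha|x|^2$ and the identification of the threshold $\alpha>\frac{4a^2}{1-4a^2}$ are the conceptual heart, but they are elementary once the identities above are in place. I expect the genuine obstacle to be matching the hypotheses of Theorem \ref{tstimat} to the inequality $\mathcal L_\alpha|x|^2\le-\delta|x|^2+C_0$ on a general ambient geometry (ensuring properness and that the general estimate applies verbatim to $|x|^2$); in the self-contained route, the corresponding delicate point is the self-improving iteration, i.e.\ absorbing a fraction of $\int_{B_r\cap\Sigma}e^{-\frac{\alpha}{4}|x|^2}$ back into itself to obtain a bound uniform in $r$ rather than merely a relation between two possibly infinite quantities.
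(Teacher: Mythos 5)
Your core computations coincide with the paper's: $\Delta|x|^2=2n+4|\mathbf H|^2$, the drift combination producing the factor $(1+\alpha)|\mathbf H|^2$, and the threshold $4(1+\alpha)a^2<\alpha\Leftrightarrow\alpha>\frac{4a^2}{1-4a^2}$; your treatment of (ii) and of the ``in particular'' case $a<\frac{1}{2\sqrt 2}$ also matches. However, your primary route --- absorbing everything into $\mathcal L_\alpha|x|^2\le-\delta|x|^2+C_0$ and then invoking Theorem \ref{tstimat} --- does not, as literally stated, satisfy that theorem's hypotheses. Theorem \ref{tstimat} requires \emph{two} inequalities with a \emph{common} quadratic bound $k(r)=a_2r^2+a_1r+a_0$ on $D_r$: the drift inequality \emph{and} $\Delta h\le k(r)$, with $a_2<\beta/4$. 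Your reduction amounts to taking $k(r)=C_0$ constant (with $\beta=\delta$), and then the second hypothesis fails whenever $a>0$, since $\Delta|x|^2=2n+4|\mathbf H|^2$ can grow like $a^2|x|^2$; this second hypothesis is genuinely used in the proof of Theorem \ref{tstimat} (it controls the term $(1-t^{-\gamma})\Delta h$ in the deformation argument). The repair is to keep the quadratic bounds instead of absorbing them: with $h=\frac{|x|^2}{4}$ and $\beta=\alpha$ one gets $\Delta h\le a^2r^2+2abr+b^2+\frac n2$ and $\Delta h-\alpha|\nabla h|^2+\alpha h=\frac n2+(1+\alpha)|\mathbf H|^2$ on $B_r(0)\cap\Sigma$, both bounded by $k(r)$ with $a_2=(1+\alpha)a^2$, and $a_2<\frac{\alpha}{4}$ is precisely $\alpha>\frac{4a^2}{1-4a^2}$ --- which is exactly the paper's proof.

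Your self-contained fallback, by contrast, is correct and complete, and it is a genuinely different argument from the paper's: properness gives compactness of $B_r(0)\cap\Sigma$ and (via Sard) regular values $r$; the divergence theorem with outward conormal $x^T/|x^T|$ gives a nonnegative boundary term; this yields $\delta\int_{B_r\cap\Sigma}|x|^2e^{-\frac{\alpha}{4}|x|^2}\le C_0\int_{B_r\cap\Sigma}e^{-\frac{\alpha}{4}|x|^2}$, and the split at $R_0$ with $\delta R_0^2>2C_0$ (note $C_0>0$ automatically, as $C_0\ge 2n$) gives a bound uniform in $r$, hence (i), and (ii) follows trivially with the same constant $C(\alpha)=\int_\Sigma e^{-\frac{\alpha}{4}|x|^2}d\sigma$. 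This bypasses entirely the machinery in the proof of Theorem \ref{tstimat} (the one-parameter deformation of the weight plus the iteration over annuli), and --- tellingly --- it never needs the hypothesis $\Delta h\le k(r)$, which is why your reduction to $-\delta|x|^2+C_0$ is harmless there. What Theorem \ref{tstimat} buys in exchange is generality (an arbitrary proper function $h$ on an arbitrary manifold, and the sharper family of bounds in its part (ii)); what your direct argument buys is brevity and transparency for this specific application. So either present the divergence-theorem argument as the proof, or, if you wish to cite Theorem \ref{tstimat}, verify both of its hypotheses with the quadratic $k(r)$ kept intact.
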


\begin{remark} 
	The partial  conclusion in Theorem \ref{ineq5} ($\int_{\Sigma}e^{-\frac{|x|^2}{4}}d\sigma<\infty$ if $0\leq a<\frac1{2\sqrt{2}}$) can also be proved using Theorem 5 in \cite{AM}.  It is noted    that  the restriction  $a_2<\frac{1}{4}$ should be added in the assumption of Theorem 5 in \cite{AM}.   
\end{remark}
A natural question arises,  whether  \eqref{ps} holds for any complete properly immersed self-expanders.  In this direction, we have the following result:
\begin{theorem}\label{ineq4}
		Let $\Sigma$ be a complete $n$-dimensional properly immersed   self-expander   in $\R^m$, $n<m$.   If there are some  constants  $0\leq a<\frac12$, $b\geq 0$ and $r_0>0$ such that  the mean curvature vector of $\Sigma$ satisfies that  $| { \bf H}|(x)\geq a|x|+b$ for   $x\in\Sigma\setminus B_{r_0}(0)$, then 
	\begin{align*}
	\int_{\Sigma}e^{-\frac{4a^2}{1-4a^2}|x|^2}d\sigma=\infty.
	\end{align*} 
		In particular, if there are some  constants $b\geq 0$ and $r_0>0$ such that  the mean curvature vector of $\Sigma$ satisfies that  $| { \bf H}|(x)\geq\frac{1}{2\sqrt{2}}|x|+b$ for   $x\in\Sigma\setminus B_{r_0}(0)$,  then 
	\begin{align*}
	\int_{\Sigma}e^{-\frac{|x|^2}{4}}d\sigma=\infty.
	\end{align*}
\end{theorem}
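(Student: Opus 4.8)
The plan is to argue by contradiction from a weighted divergence identity, the weight being a Gaussian tuned to the hypothesis. First I would record the two self-expander facts I need. Writing $r=|x|$ for the extrinsic distance restricted to $\Sigma$, the Beltrami identity $\Delta_\Sigma x=\mathbf H$ together with $\mathbf H=\tfrac12 x^\perp$ gives $\Delta_\Sigma |x|^2 = 2n+2\langle x,\mathbf H\rangle = 2n+4|\mathbf H|^2$, while $|x^\perp|^2=4|\mathbf H|^2$ yields the tangential identity $|x^\top|^2=|x|^2-4|\mathbf H|^2$. The lower bound $|\mathbf H|(x)\ge a|x|+b$ on $\Sigma\setminus B_{r_0}(0)$ then becomes the tangential pinching $|x^\top|^2\le(1-4a^2)|x|^2$, equivalently $|\nabla_\Sigma r|\le\sqrt{1-4a^2}<1$ outside $B_{r_0}(0)$. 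This sub-unit Lipschitz control on $r$ is the geometric mechanism forcing the volume to grow and the weighted integral to diverge.

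For $s>0$ I would test with the field $X=e^{-s|x|^2}\nabla_\Sigma|x|^2=2e^{-s|x|^2}x^\top$ and compute
$$\operatorname{div}_\Sigma X = e^{-s|x|^2}\bigl(2n+4|\mathbf H|^2-4s|x^\top|^2\bigr).$$
Inserting $4|\mathbf H|^2\ge 4a^2|x|^2$ and $|x^\top|^2\le(1-4a^2)|x|^2$, the parenthesis is bounded below by $2n+4\bigl(a^2-s(1-4a^2)\bigr)|x|^2$. The exponent is then pinned by demanding nonnegativity: at the critical value $s=\dfrac{a^2}{1-4a^2}$ the $|x|^2$-coefficient vanishes and $\operatorname{div}_\Sigma X\ge 2n\,e^{-s|x|^2}>0$ on $\Sigma\setminus B_{r_0}(0)$. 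At $a=\tfrac1{2\sqrt2}$ this critical $s$ equals $\tfrac14$, recovering the stated special case $\int_\Sigma e^{-|x|^2/4}\,d\sigma=\infty$; in general it is exactly the threshold making the statement the sharp divergence counterpart of the finiteness Theorem \ref{ineq5}.

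Now suppose, for contradiction, that $\int_\Sigma e^{-s|x|^2}\,d\sigma<\infty$ at this critical $s$. Integrating the identity over $\Sigma\cap B_R(0)$ and using the divergence theorem gives $\int_{\Sigma\cap B_R}\operatorname{div}_\Sigma X\,d\sigma=\mathcal B(R)$, where the outward conormal is $x^\top/|x^\top|$ and $\mathcal B(R)=2e^{-sR^2}\int_{\Sigma\cap\partial B_R}|x^\top|\,ds\ge0$. By the coarea formula the assumption says $\phi(\rho):=e^{-s\rho^2}\int_{\{r=\rho\}}|\nabla_\Sigma r|^{-1}\,ds$ lies in $L^1(0,\infty)$, whence $\liminf_{\rho\to\infty}\rho\,\phi(\rho)=0$; a short estimate using $|x^\top|\le\sqrt{1-4a^2}\,R$ (which also bounds $|\nabla_\Sigma r|^{-1}=R/|x^\top|$ from below, hence the boundary measure) gives $\mathcal B(R)\le 2(1-4a^2)R\,\phi(R)$, so $\liminf_{R\to\infty}\mathcal B(R)=0$. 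But $\mathcal B$ is nonnegative and nondecreasing on $[r_0,\infty)$, since its increments integrate the nonnegative $\operatorname{div}_\Sigma X$; a nonnegative nondecreasing function with $\liminf=0$ vanishes identically, forcing $\operatorname{div}_\Sigma X\equiv 0$ on $\Sigma\setminus B_{r_0}(0)$ and contradicting $\operatorname{div}_\Sigma X\ge 2n\,e^{-s|x|^2}>0$ there.

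The main obstacle is precisely the flux control $\liminf_{R\to\infty}\mathcal B(R)=0$: this is where completeness and properness enter, and where the gradient bound must be matched to the coarea density correctly, the nonnegativity and monotonicity of $\mathcal B$ being what upgrade a $\liminf$ into genuine vanishing. The only other delicate point is bookkeeping of the critical constant in the Gaussian weight; the sign condition above produces $s=a^2/(1-4a^2)$, which is sharp against Theorem \ref{ineq5} and specializes to the asserted particular case, so I would present the argument at that exponent and deduce every weaker exponent by the pointwise comparison $e^{-c|x|^2}\ge e^{-s|x|^2}$ for $c\le s$.
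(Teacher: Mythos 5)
Your proof is correct, and at its core it runs on the same engine as the paper's: the identities $\Delta_\Sigma|x|^2=2n+4|\mathbf{H}|^2$ and $|x^T|^2=|x|^2-4|\mathbf{H}|^2$, integration by parts of the weighted field $e^{-s|x|^2}\nabla_\Sigma|x|^2$ over $\Sigma\cap B_R(0)$ (the paper's first displayed identity in its proof is exactly this, written with $u=e^{-\frac{\alpha}{4}|x|^2}$), and the same critical exponent chosen to kill the $|x|^2$-coefficient. The difference lies in the final assembly. The paper combines Stokes with the coarea formula into a direct monotonicity statement, namely that $t^{-n}\int_{(B_t(0)\setminus B_{r_0}(0))\cap\Sigma}u\,d\sigma$ is nondecreasing for $t\ge r_0$, so the weighted annular volume grows at least like $t^n$ and divergence follows immediately and quantitatively. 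You instead argue by contradiction: finiteness plus coarea forces $\liminf_{\rho\to\infty}\rho\,\phi(\rho)=0$, your estimate $\mathcal{B}(R)\le 2(1-4a^2)R\,\phi(R)$ (which checks out) transfers this to the flux, and monotonicity of the flux, coming from $\operatorname{div}_\Sigma X\ge 2n\,e^{-s|x|^2}>0$ outside $B_{r_0}(0)$, upgrades the $\liminf$ to $\mathcal{B}\equiv 0$, a contradiction. Your route needs the extra $L^1$-$\liminf$ lemma and yields only divergence, whereas the paper's yields the explicit $t^n$ growth; both are sound. Two bookkeeping remarks. First, your exponent $s=\frac{a^2}{1-4a^2}$ is precisely what the paper's proof establishes (weight $e^{-\frac{\alpha}{4}|x|^2}$ with $\alpha=\frac{4a^2}{1-4a^2}$), and it is the only value consistent with the particular case $a=\frac{1}{2\sqrt{2}}$, $s=\frac14$; the exponent $\frac{4a^2}{1-4a^2}$ displayed in the theorem drops the factor $\frac14$ and is evidently a typo, so your closing comment about deducing smaller exponents by pointwise comparison is the right reading, and neither your proof nor the paper's proves the literal (stronger) displayed claim. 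Second, you should state explicitly that $\Sigma\setminus B_{r_0}(0)$ is nonempty --- a complete properly immersed self-expander cannot lie in a ball, since $\Delta_\Sigma|x|^2\ge 2n>0$ forbids an interior maximum of $|x|^2$ --- and that Stokes and coarea are applied at regular values of $|x|$ restricted to $\Sigma$ (which suffice by Sard's theorem); these are the same technical points the paper also leaves implicit.
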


 In this paper,  the notation $A$  denotes  the second fundamental form of $\Sigma$. We obtain the following rigidity property of hyperplanes as  self-expanders.
   \begin{theorem}\label{se33}
  	Let $\Sigma$ be a complete properly immersed self-expander hypersurface in $\R^{n+1}$.   Assume  that  its  mean curvature $H$ satisfies $|H|(x)\leq a|x|+b$, $x\in\Sigma$, for  some constants $0\leq a<\frac{1}{2}$ and   $b>0$.  If there exists $\beta>0$, such that 
  	$$|A|^2H^2 +\frac{1}{2}H^2+\beta A(x^T,x^T)H\leq0,$$
  	then $\Sigma$ must be a hyperplane $\R^{n}$ through the origin,  where $x^T$ denotes the
  	tangent component of the position vector $x$.
  \end{theorem}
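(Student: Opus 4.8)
The plan is to first pin down the elliptic equation satisfied by the mean curvature and then feed the pointwise hypothesis into a weighted integral identity. Writing $H=\frac12\langle x,N\rangle$ for the scalar mean curvature and combining the self-expander equation with the Codazzi equations, a direct computation (the expander analogue of the Colding--Minicozzi identity) gives
\[
\mathcal{L}H=-\Bigl(|A|^2+\tfrac12\Bigr)H,\qquad \mathcal{L}=\Delta+\tfrac12\langle x,\nabla\cdot\rangle .
\]
Differentiating $\langle x,N\rangle$ along $\Sigma$ (so that $\nabla\langle x,N\rangle=-A(x^T)$) also yields the first-order identity $A(x^T,x^T)\,H=-\langle\nabla H^2,x^T\rangle$, so the hypothesis rewrites as $\bigl(|A|^2+\tfrac12\bigr)H^2\le \beta\,\langle\nabla H^2,x^T\rangle$. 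Using $\mathcal{L}(H^2)=2H\mathcal{L}H+2|\nabla H|^2=-2(|A|^2+\tfrac12)H^2+2|\nabla H|^2$, the rewritten hypothesis says that $v:=H^2\ge0$ is a nonnegative subsolution, $\tfrac12\Delta v+\bigl(\tfrac14+\beta\bigr)\langle x^T,\nabla v\rangle\ge|\nabla H|^2\ge0$; this is the conceptual heart of the argument.

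Next I would integrate. Fix $\alpha>\frac{4a^2}{1-4a^2}$, so that by Theorem \ref{ineq5}(i) the weight $e^{-\frac{\alpha}{4}|x|^2}$ is integrable on $\Sigma$ and, by part (ii), $\mathrm{Vol}(B_r(0)\cap\Sigma)\lesssim e^{\frac\alpha4 r^2}$. Take a cutoff $\phi_R$ equal to $1$ on $B_R(0)$, supported in $B_{2R}(0)$, with $|\nabla\phi_R|\le C/R$. Multiplying $\mathcal{L}H=-(|A|^2+\tfrac12)H$ by $H\,\phi_R^2\,e^{-\frac{\alpha}{4}|x|^2}$ and integrating by parts, the Laplacian produces $\int|\nabla H|^2$ together with a cutoff error $\int H\langle\nabla H,\nabla\phi_R^2\rangle e^{-\frac\alpha4|x|^2}$, while the drift of $\mathcal{L}$ and the gradient of the weight leave a single first-order term proportional to $\int\langle\nabla H^2,x^T\rangle\,\phi_R^2\,e^{-\frac{\alpha}{4}|x|^2}$. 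The cutoff error is treated by Cauchy--Schwarz and partially absorbed; its surviving part is $\lesssim R^{-2}\int_{B_{2R}\setminus B_R}H^2 e^{-\frac\alpha4|x|^2}$, and since $H^2\le(a|x|+b)^2$ is at most quadratic while $\int_\Sigma e^{-\frac\alpha4|x|^2}<\infty$, the tail estimate coming from Theorem \ref{ineq5}(ii) forces this to tend to $0$ as $R\to\infty$. The remaining first-order term is exactly the quantity the hypothesis controls, and this is where most of the work lies: using $\bigl(|A|^2+\tfrac12\bigr)H^2\le\beta\langle\nabla H^2,x^T\rangle$ to trade it against the manifestly nonnegative $\int(|A|^2+\tfrac12)H^2 e^{-\frac\alpha4|x|^2}$.

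Granting that this trade can be made quantitative, passing to the limit $R\to\infty$ leaves only the nonnegative integrals $\int|\nabla H|^2 e^{-\frac\alpha4|x|^2}$ and $\int(|A|^2+\tfrac12)H^2 e^{-\frac\alpha4|x|^2}$, and the resulting inequality forces both to vanish. Since $e^{-\frac\alpha4|x|^2}>0$ and $|A|^2+\tfrac12\ge\tfrac12$, this yields $H\equiv0$ on $\Sigma$. Finally $H\equiv0$ gives $\mathbf H=\tfrac12 x^\perp=0$, i.e.\ $\langle x,N\rangle\equiv0$, so the position vector is everywhere tangent; hence $\Sigma$ is invariant under the dilations $x\mapsto\lambda x$ and is a minimal cone. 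A complete, properly immersed, smooth minimal cone must be a hyperplane, and $\langle x,N\rangle\equiv0$ pins that hyperplane through the origin, which is the claim.

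I expect the genuine difficulty to lie entirely in the integration step, and specifically in reconciling two competing weights. The operator $\mathcal{L}$ is self-adjoint with respect to the growing weight $e^{\frac14|x|^2}$, for which the drift term disappears but every integral and cutoff error diverges; integrability instead demands the decaying weight $e^{-\frac\alpha4|x|^2}$, which reintroduces a first-order drift term upon differentiating the weight. The role of the hypothesis---and of the free parameter $\beta$, which I would choose in tandem with $\alpha$ taken just above the threshold $\frac{4a^2}{1-4a^2}$---is precisely to dominate this leftover first-order term by the good curvature term. The assumption $a<\frac12$ enters twice: it makes $e^{-\frac\alpha4|x|^2}$ integrable, and it guarantees $|x^T|^2\ge(1-4a^2)|x|^2-O(|x|)$, keeping $|x^T|^2$ comparable to $|x|^2$ at infinity so that the drift cannot overwhelm the estimate. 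Making this domination quantitative, and verifying that the boundary terms produced by integrating the weight against $x^T$ do not spoil the sign, is the crux of the proof.
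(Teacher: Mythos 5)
Your overall mechanism is the same as the paper's: the paper deduces Theorem \ref{se33} from its Theorem \ref{se3} (multiply the equation for $H$ by $H^{\delta}\varphi^2e^{-\frac{\alpha}{4}|x|^2}$ with $\delta$ odd --- your proposal is the case $\delta=1$ --- integrate by parts, and kill the cutoff error using the weighted-volume finiteness of Lemma \ref{spn10}, which comes from Theorem \ref{ineq5}), so your integration scheme, your use of Theorem \ref{ineq5}, and your endgame $H\equiv 0\Rightarrow$ hyperplane all match the paper. But there is a genuine gap, and it sits exactly at the step you yourself call the crux. Your first-order identity has the opposite sign to the one valid in the conventions under which the hypothesis is stated. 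The paper's conventions are $\mathbf{H}=-H\mathbf{n}$, $H=-\frac12\langle x,\mathbf{n}\rangle$ and $2\nabla_{e_i}H=h_{ij}\langle x,e_j\rangle$, which give $\langle x,\nabla H\rangle=\frac12A(x^T,x^T)$ and hence $A(x^T,x^T)H=+\langle x^T,\nabla H^2\rangle$ (this is precisely what makes \eqref{spn8} true); the hypothesis therefore forces $\langle x^T,\nabla H^2\rangle\le-\frac{1}{\beta}\bigl(|A|^2+\frac12\bigr)H^2\le 0$. Your identity $A(x^T,x^T)H=-\langle\nabla H^2,x^T\rangle$ (the convention with $\operatorname{tr}A=H$) turns the hypothesis into $\bigl(|A|^2+\frac12\bigr)H^2\le+\beta\langle\nabla H^2,x^T\rangle$, i.e.\ a \emph{lower} bound on the drift term. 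Now the exact identity your integration produces is
\[
\int_{\Sigma}|\nabla H|^2\phi_R^2e^{-\frac{\alpha}{4}|x|^2}+\int_{\Sigma}H\langle\nabla H,\nabla\phi_R^2\rangle e^{-\frac{\alpha}{4}|x|^2}
=\int_{\Sigma}\Bigl[\bigl(|A|^2+\tfrac12\bigr)H^2+\tfrac{\alpha+1}{4}\langle\nabla H^2,x^T\rangle\Bigr]\phi_R^2e^{-\frac{\alpha}{4}|x|^2},
\]
and with your sign the right-hand side is bounded \emph{below} by $\bigl(1+\frac{\alpha+1}{4\beta}\bigr)\int\bigl(|A|^2+\frac12\bigr)H^2\phi_R^2e^{-\frac{\alpha}{4}|x|^2}\ge 0$, not above: the ``trade'' goes the wrong way, the conclusion degenerates to ``nonnegative $\ge$ nonnegative'', and nothing forces $\int|\nabla H|^2$ or $\int(|A|^2+\frac12)H^2$ to vanish. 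With the paper's sign the same computation closes at once, since the right-hand side is $\le\bigl(1-\frac{\alpha+1}{4\beta}\bigr)\int\bigl(|A|^2+\frac12\bigr)H^2\phi_R^2e^{-\frac{\alpha}{4}|x|^2}\le 0$ as soon as $\alpha\ge 4\beta-1$.

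There is also a smaller but real omission in your parameter bookkeeping: $\beta$ is prescribed by the hypothesis, not free, and taking $\alpha$ ``just above the threshold $\frac{4a^2}{1-4a^2}$'' need not give $\frac{\alpha+1}{4}\ge\beta$. The paper resolves this with a one-line observation you never make: since $|A|^2H^2+\frac12H^2\ge 0$, the hypothesis already forces $A(x^T,x^T)H\le 0$, so it self-improves from the coefficient $\beta$ to any coefficient $\frac{\alpha+1}{4}\ge\beta$; one then fixes $\alpha>\max\bigl\{4\beta-1,\frac{4a^2}{1-4a^2}\bigr\}$, for which both the sign condition (i) and the integrability condition (ii) of Theorem \ref{se3} hold (the latter by Lemma \ref{spn10}). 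Once the sign of the identity is corrected and this observation is added, your proposal becomes precisely the paper's proof, the only immaterial difference being extrinsic rather than intrinsic cutoffs; your final step ($H\equiv 0$, so $x^{\perp}\equiv 0$, so $\Sigma$ is a smooth minimal cone, hence a hyperplane through the origin, alternatively via \eqref{eH-1}) is fine.
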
 

Theorem \ref{se33} is a consequence of a more general result (see Theorem \ref{se3}). \\

We also prove the following result:

\begin{theorem}\label{se334}
	Let $\Sigma$ be a complete properly immersed  self-expander hypersurface in $\R^{n+1}$. Assume that its mean curvature $H$ is bounded from below and satisfies  $H(x)\leq a|x|+b$, $x\in\Sigma$, for  some constants $0\leq a<\frac{1}{2}$ and $b>0$. If   there exists  $\alpha>\frac{4a^2}{1-4a^2}$ such that 
	$$|A|^2H+\frac{H}{2} +\frac{\alpha+1}{4}A(x^T,x^T)\geq0,$$
	then  $\Sigma$ must be a  hyperplane  $\R^{n}$ through the origin, where $x^T$ denotes the
	tangent component of the position vector $x$.
\end{theorem}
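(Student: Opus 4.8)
The plan is to reduce the pointwise curvature inequality to a single drift‑differential inequality for the mean curvature $H$ and then run a weighted integral estimate that forces $H$ to be constant. First I would record the two basic self‑expander identities for a hypersurface. Choosing the unit normal so that the self‑expander equation $\vec H=\tfrac12 x^{\perp}$ reads $\langle x,\nu\rangle=2H$, a direct computation of $\nabla\langle x,\nu\rangle$ and $\Delta\langle x,\nu\rangle$ via the Weingarten relation and the contracted Codazzi equation gives
\[
\mathcal L H=-\Big(|A|^2+\tfrac12\Big)H,\qquad A(x^T,x^T)=-2\langle x,\nabla H\rangle,
\]
where $\mathcal L=\Delta+\tfrac12\langle x,\nabla\cdot\rangle$. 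Substituting $(|A|^2+\tfrac12)H=-\mathcal L H$ and $\tfrac{\alpha+1}{4}A(x^T,x^T)=-\tfrac{\alpha+1}{2}\langle x,\nabla H\rangle$ into the hypothesis turns it into the clean statement
\[
\Delta H+\frac{\alpha+2}{2}\langle x,\nabla H\rangle\le 0,
\]
i.e. $\operatorname{div}\!\big(e^{\frac{\alpha+2}{4}|x|^2}\nabla H\big)\le 0$: the function $H$ is a supersolution for the drift Laplacian with potential $\tfrac{\alpha+2}{4}|x|^2$.

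Next, the goal is to show $H$ is constant. I would test the inequality against a cutoff $\phi_R$ times the decaying Gaussian weight $e^{-\frac{\alpha}{4}|x|^2}$, whose total mass is finite precisely because $\alpha>\tfrac{4a^2}{1-4a^2}$ (Theorem \ref{ineq5}(i)) and whose interaction with the at‑most‑Gaussian volume growth (Theorem \ref{ineq5}(ii)) is controlled. Integrating by parts and re‑inserting the identity for $\mathcal L H$, the bulk term should reduce to a nonnegative weighted integral (of $|\nabla H|^2$, or $(|A|^2+\tfrac12)H^2$, type), while the cutoff error terms are estimated using the linear bound $H\le a|x|+b$, the lower bound on $H$, and the gradient bound $|\nabla H|\le\tfrac12|A|\,|x|$ coming from the second identity above. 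The lower bound on $H$ is what lets me fix the sign of $H$ where it enters. Letting $R\to\infty$, the error terms vanish by dominated convergence against the finite weighted measure, and one is left with a nonnegative weighted integral forced to be $\le 0$, hence $\nabla H\equiv0$.

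Once $H$ is constant, the identity $\mathcal L H=-(|A|^2+\tfrac12)H$ reads $0=-(|A|^2+\tfrac12)H$; since $|A|^2+\tfrac12\ge\tfrac12>0$ this gives $H\equiv0$. Then the self‑expander equation yields $x^{\perp}=2H\nu=0$, so the position vector is everywhere tangent and $\Sigma$ is a smooth complete minimal cone with vertex at the origin; such a hypersurface must be a hyperplane $\mathbb R^{n}$ through the origin, which is the assertion.

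The hard part is the integration by parts. The inequality $\operatorname{div}(e^{\frac{\alpha+2}{4}|x|^2}\nabla H)\le0$ has as its \emph{natural} self‑adjoint weight the exponentially \emph{growing} measure $e^{\frac{\alpha+2}{4}|x|^2}d\sigma$, against which nothing is integrable, whereas the only finite weighted volume available is the \emph{decaying} one $e^{-\frac{\alpha}{4}|x|^2}d\sigma$ of Theorem \ref{ineq5}. The whole difficulty is to choose the test function so that the principal term keeps the favorable sign supplied by the hypothesis while the mismatch between the two weights is absorbed into error integrals that are genuinely controlled by the finiteness and volume‑growth estimates of Theorem \ref{ineq5}; the threshold $\alpha>\tfrac{4a^2}{1-4a^2}$ is exactly what makes these weighted integrals converge, so that the cutoff errors die in the limit and the rigidity conclusion can be extracted.
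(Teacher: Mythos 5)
Your reduction goes wrong at the very first identity, and the sign you lose there is exactly what creates the ``hard part'' that your sketch then fails to resolve. Under the paper's conventions (${\bf H}=-H{\bf n}$, so $H=-\frac12\langle x,{\bf n}\rangle$, with $h_{ij}$ normalized so that $\bar{\nabla}_{e_i}{\bf n}=-h_{ij}e_j$), differentiating $H=-\frac12\langle x,{\bf n}\rangle$ gives $2\nabla_{e_i}H=h_{ij}\langle x,e_j\rangle$, hence
\begin{equation*}
\langle x,\nabla H\rangle=+\tfrac12 A(x^T,x^T),
\end{equation*}
which is the \emph{opposite} of your $A(x^T,x^T)=-2\langle x,\nabla H\rangle$ (your sign corresponds to the convention $H=+\operatorname{tr}A$; the theorem's hypothesis is not invariant under flipping the relative sign of $H$ and $A$, so this changes the statement being proved). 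With the correct sign, the hypothesis together with $\mathcal{L}H=-(|A|^2+\frac12)H$ is \emph{equivalent} to
\begin{equation*}
\mathcal{L}_\alpha H:=\Delta H-\tfrac{\alpha}{2}\langle x,\nabla H\rangle\le 0,
\qquad\text{i.e.}\qquad
\operatorname{div}\bigl(e^{-\frac{\alpha}{4}|x|^2}\nabla H\bigr)\le 0,
\end{equation*}
(this is \eqref{spn8}), whose natural self-adjoint weight is the \emph{decaying} Gaussian $e^{-\frac{\alpha}{4}|x|^2}d\sigma$ --- precisely the measure that is finite for $\alpha>\frac{4a^2}{1-4a^2}$ by Theorem \ref{ineq5} and Lemma \ref{spn10} (note $|H|\le a|x|+b_1$ follows from the upper linear bound plus the lower bound). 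So the mismatch you describe between a growing natural weight $e^{\frac{\alpha+2}{4}|x|^2}$ and a decaying finite one is an artifact of your sign error, not a feature of the problem; and since you never overcome it (you only assert that the cutoff errors ``should'' be absorbable), the central analytic step of your proof is missing.

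Even granting the corrected reduction, your plan to extract $\nabla H\equiv0$ by testing the inequality directly has a second gap: any test of $\mathcal{L}_\alpha H\le0$ against $H$ (times cutoffs and the weight) requires sign control of $H$ that is not available. The paper's route (Theorem \ref{se5}) handles this with the lower bound on $H$ used in a different way: set $C=\inf_\Sigma H>-\infty$; by the strong maximum principle applied to $\mathcal{L}_\alpha(H-C)\le0$, either $H\equiv C$ or $H>C$ everywhere, and in the latter case $u=\log(H-C)$ satisfies $\mathcal{L}_\alpha u\le-|\nabla u|^2$, so that testing with $\varphi_j^2e^{-\frac{\alpha}{4}|x|^2}$, $|\nabla\varphi_j|\le\frac1j$, and absorbing by Cauchy--Schwarz gives
\begin{equation*}
\int_\Sigma \varphi_j^2|\nabla u|^2e^{-\frac{\alpha}{4}|x|^2}d\sigma
\le 4\int_\Sigma|\nabla\varphi_j|^2e^{-\frac{\alpha}{4}|x|^2}d\sigma
\le\frac{4}{j^2}\int_{B^{\Sigma}_{2j}(p)\setminus B^{\Sigma}_{j}(p)}e^{-\frac{\alpha}{4}|x|^2}d\sigma\longrightarrow0,
\end{equation*}
forcing $H$ to be constant and contradicting $H>\inf_\Sigma H$. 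The logarithm trick is essential here: it makes the good term $|\nabla u|^2$ appear with a coefficient independent of $H$, which is what your ``nonnegative weighted integral of $|\nabla H|^2$ or $(|A|^2+\frac12)H^2$ type'' would need but does not have. Once $H\equiv C$, equation \eqref{eH-1} forces $H\equiv0$, and your concluding cone argument for why a smooth complete minimal hypersurface with $x^{\perp}\equiv0$ is a hyperplane through the origin is fine.
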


 In Section \ref{results} of this paper, we study the problems related to the spectrum of the drifted Laplacian  $\mathcal{L}$. In \cite{cheng2018},  the second author of the present paper and Zhou (\cite[Theorems 1.1 and 1.3]{cheng2018}) proved that the spectrum  of the  drifted Laplacian $\mathcal{L}$ on a properly immersed $n$-dimensional self-expander $\R^{n+k}$, $k\geq1$, is discrete. In particular, the bottom $\lambda_1$ of the spectrum of $\mathcal{L}$ is the first weighted $L^2$ eigenvalue of $\mathcal{L}$. Further, for codimension $1$ case, they  proved that  $\displaystyle \lambda_1\geq\frac{n}{2} +\inf_{x\in\Sigma}H^2$  and  this lower bound  is achieved if  and only if the self-expander is the hyperplane  through the origin. 
In this paper we give an upper bound
 for the bottom of the spectrum of the drifted Laplacian $\mathcal{L}$ and  discuss the rigidity of the upper bound. More precisely, we prove that
 
  \begin{theorem} \label{thm-3-hyper} Let $\Sigma$ be  a complete properly immersed self-expander hypersurface in $\mathbb{R}^{n+1}$. Assume that its mean curvature $H$ satisfies $|H|(x)\leq a|x|+b$, $x\in\Sigma$, for some constants $0\leq a<\frac{1}{2\sqrt{2}}$ and $b>0$.  Then the bottom $\lambda_1$ of the spectrum of the drifted Laplacian $\mathscr{L}=\Delta+\frac{1}{2}\left<x, \nabla\cdot\right>$ on $\Sigma$, i.e. the first weighted $L^2$ eigenvalue of $\mathscr{L}$ satisfies
 	\begin{align}
 	\lambda_1
 	&\leq \frac{n}{2}+\frac{\int_{\Sigma}H^2 e^{-\frac{|x|^2}{4}}d\sigma}{\int_{\Sigma}e^{-\frac{|x|^2}{4}}d\sigma},
 	\end{align}
 	with  equality  if and only if $\Sigma$ is the hyperplane $\mathbb{R}^n$ through the origin.
 \end{theorem}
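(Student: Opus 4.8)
The plan is to bound $\lambda_1$ from above by evaluating the Rayleigh quotient of the drifted Laplacian on a single well-chosen test function, and then to identify the equality case with the rigidity already available for the lower bound. Recall that $\mathscr L=\Delta+\frac12\langle x,\nabla\cdot\rangle$ is self-adjoint with respect to the weighted measure $d\mu=e^{|x|^2/4}\,d\sigma$, since on $\Sigma$ one has $\frac12\langle x,\nabla f\rangle=\langle\nabla\frac{|x|^2}{4},\nabla f\rangle$. Because the spectrum of $\mathscr L$ is discrete by \cite{cheng2018}, the bottom eigenvalue admits the variational characterization
\begin{equation*}
\lambda_1=\inf\left\{\frac{\int_\Sigma|\nabla u|^2\,d\mu}{\int_\Sigma u^2\,d\mu}\ :\ u\in W^{1,2}(\Sigma,d\mu),\ u\neq0\right\},
\end{equation*}
so it suffices to exhibit one admissible $u$ whose quotient equals the right-hand side of the claimed inequality, and any minimizer of this quotient is automatically a first eigenfunction.

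My candidate is $u=e^{-|x|^2/4}$, which is precisely the ground state on the flat hyperplane through the origin. First I would check that $u$ lies in the form domain: one computes $u^2\,d\mu=e^{-|x|^2/4}\,d\sigma$ and $|\nabla u|^2\,d\mu=\frac14|x^T|^2e^{-|x|^2/4}\,d\sigma$, and both integrals are finite by Theorem \ref{ineq5}. Indeed, the hypothesis $a<\frac{1}{2\sqrt2}$ forces $\frac{4a^2}{1-4a^2}<1$, so one may fix $\alpha\in(\frac{4a^2}{1-4a^2},1)$; part (i) of Theorem \ref{ineq5} gives $\int_\Sigma e^{-\frac{\alpha}{4}|x|^2}\,d\sigma<\infty$, hence $\int_\Sigma e^{-|x|^2/4}\,d\sigma<\infty$, while the volume growth in part (ii) together with $\alpha<1$ and the co-area formula yields $\int_\Sigma|x|^2e^{-|x|^2/4}\,d\sigma<\infty$, which dominates the Dirichlet integral since $|x^T|\le|x|$.

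Next I would evaluate the quotient using the self-expander equation. From $\Delta_\Sigma x=\mathbf H$, $|\nabla_\Sigma x|^2=n$, and $\mathbf H=\frac12 x^\perp$ with $|x^\perp|^2=4H^2$, one gets the key identity $\Delta_\Sigma\frac{|x|^2}{4}=\frac n2+H^2$, and a direct computation then gives $\mathscr L u=-(\frac n2+H^2)u$. Integrating by parts against $d\mu$, with a cutoff exhaustion to discard the terms at infinity (legitimate by the finiteness above), produces
\begin{equation*}
\int_\Sigma|\nabla u|^2\,d\mu=-\int_\Sigma u\,\mathscr L u\,d\mu=\int_\Sigma\Big(\tfrac n2+H^2\Big)e^{-|x|^2/4}\,d\sigma,
\end{equation*}
so dividing by $\int_\Sigma u^2\,d\mu=\int_\Sigma e^{-|x|^2/4}\,d\sigma$ gives exactly the stated upper bound for $\lambda_1$.

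Finally, for the rigidity: equality forces $u=e^{-|x|^2/4}$ to realize the infimum, hence to be a first eigenfunction, so $-\mathscr L u=\lambda_1 u$; comparing with $-\mathscr L u=(\frac n2+H^2)u$ and using $u>0$ shows that $H^2\equiv c$ is constant, whence $\lambda_1=\frac n2+c=\frac n2+\inf_\Sigma H^2$. This is precisely equality in the lower bound $\lambda_1\ge\frac n2+\inf_\Sigma H^2$ of \cite{cheng2018}, whose rigidity statement then forces $\Sigma$ to be the hyperplane $\mathbb R^n$ through the origin; the converse is immediate, since there $H\equiv0$ and a direct computation gives $\lambda_1=\frac n2$. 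The main technical obstacle is the justification of the integration by parts on the non-compact $\Sigma$, that is, controlling the boundary terms of the cutoff exhaustion; this is exactly where the sharp threshold $a<\frac1{2\sqrt2}$ enters, guaranteeing that the Gaussian weight $e^{-|x|^2/4}$ beats the volume growth $e^{\alpha r^2/4}$ with $\alpha<1$.
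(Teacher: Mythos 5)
Your proposal is correct and follows essentially the same route as the paper: the paper proves a one-parameter version (Theorem \ref{thm-3-hyper-1}) with test function $e^{-\frac{\alpha+1}{8}|x|^2}$ and then sets $\alpha=1$, which is exactly your test function $e^{-|x|^2/4}$, with the same identity $\mathscr{L}v+(\tfrac{n}{2}+H^2)v=0$, the same integrability input from Theorem \ref{ineq5} (Lemma \ref{spn9}), the same cutoff argument, and the same conclusion that equality makes $v$ a first eigenfunction, forcing $H$ to be constant. The only cosmetic difference is the last step of the rigidity: you invoke the equality case of the lower bound $\lambda_1\geq\frac{n}{2}+\inf_\Sigma H^2$ from \cite{cheng2018}, while the paper concludes directly from $\mathscr{L}H=-(|A|^2+\tfrac12)H$ that a constant $H$ must vanish; both are legitimate.
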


 \begin{remark}
 If $0\leq a<\frac{1}{2}$, we have a general upper bound estimate for $\lambda_1$. See Theorem \ref{thm-3-hyper-1}.
 \end{remark}

  In this paper, we also study the $L$-stability operator for self-expanders: 
  \begin{align}
  L=\mathcal{L}+|A|^2-\frac{1}{2}.
  \end{align}
    It is well known that a self-expander is noncompact (see, e.g., \cite{CH}). Thus the bottom $\mu_1$ of the spectrum of the operator $L$ may take $-\infty$ and  for $\mu_1>-\infty$, it  may not be the lowest weighted $L^2$-eigenvalue for $L$.
  If $\mu_1\geq 0$, $\Sigma$ is called $L$-stable.
 $L$-stability means that the second variation of its weighted volume is nonnegative for any compactly supported normal variation.     In this paper,  we obtain a lower bound for $\mu_1$ as follows: 
  \begin{theorem}\label{staab}
  	Let $\Sigma$ be a complete immersed self-expander hypersurface in $\R^{n+1}$. Then 
  	\begin{align}\label{bee}
  	\mu_1\geq\frac{n+1}{2}+\inf_{x\in\Sigma}Scal_{\Sigma},
  	\end{align}
  	where $\inf_{x\in\Sigma}Scal_{\Sigma}$ denotes the infimum  of the scalar curvature $Scal_{\Sigma}=H^2-|A|^2$ of $\Sigma$.
  	
  	Moreover,  the equality in \eqref{bee} holds  if  $\Sigma$ is also properly immersed, has   constant scalar curvature and satisfies   $|H|(x)\leq a|x|+b$, $x\in\Sigma$, for some constants $0\leq a<\frac{1}{2\sqrt{2}}$ and $b>0$.
  \end{theorem}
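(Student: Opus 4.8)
The plan is to run everything through the Rayleigh-quotient characterization of $\mu_1$. Writing $d\mu$ for the weighted volume element with respect to which $\mathcal L$ is self-adjoint, integration by parts gives, for every $f\in C_c^\infty(\Sigma)$,
\[
-\int_\Sigma f\,Lf\,d\mu=\int_\Sigma\Big(|\nabla f|^2-\big(|A|^2-\tfrac12\big)f^2\Big)\,d\mu,
\]
so that $\mu_1=\inf_f\big(\int_\Sigma(|\nabla f|^2-(|A|^2-\frac12)f^2)\,d\mu\big)\big/\int_\Sigma f^2\,d\mu$. Using the Gauss equation $Scal_\Sigma=H^2-|A|^2$ I would rewrite the numerator as $\int_\Sigma|\nabla f|^2\,d\mu+\int_\Sigma(\frac12-H^2+Scal_\Sigma)f^2\,d\mu$. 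Since $Scal_\Sigma\ge\inf_\Sigma Scal_\Sigma$, the estimate \eqref{bee} reduces to the weighted Poincaré inequality $\int_\Sigma|\nabla f|^2\,d\mu\ge\int_\Sigma(\frac n2+H^2)f^2\,d\mu$: adding the constant $\frac12+\inf_\Sigma Scal_\Sigma$ then produces exactly $\frac{n+1}{2}+\inf_\Sigma Scal_\Sigma$.

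To establish this Poincaré inequality I would feed the self-expander equation into the position function. A direct computation using $\Delta_\Sigma|x|^2=2n+2\langle x,\mathbf H\rangle$, the soliton identity $\langle x,\mathbf H\rangle=\frac12|x^\perp|^2$, and $|x^\perp|^2=\langle x,\nu\rangle^2=4H^2$, together with the drift term, collapses to the clean identity $\mathcal L|x|^2=|x|^2+2n$ (note only squares of $\langle x,\nu\rangle$ enter, so this is insensitive to sign conventions). Testing it against $f^2$ and integrating by parts yields $\int_\Sigma f^2(|x|^2+2n)\,d\mu=-4\int_\Sigma f\,\langle\nabla f,x^T\rangle\,d\mu$. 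I then apply Young's inequality $4|f|\,|\nabla f|\,|x^T|\le f^2|x^T|^2+4|\nabla f|^2$ and, crucially, substitute $|x^T|^2=|x|^2-\langle x,\nu\rangle^2=|x|^2-4H^2$. The terms $\int_\Sigma f^2|x|^2\,d\mu$ cancel, leaving $2n\int_\Sigma f^2\,d\mu+4\int_\Sigma H^2f^2\,d\mu\le 4\int_\Sigma|\nabla f|^2\,d\mu$, i.e. precisely the desired inequality. This proves $\mu_1\ge\frac{n+1}{2}+\inf_\Sigma Scal_\Sigma$ for any complete immersed self-expander, using neither properness nor a growth bound.

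For the equality statement the plan is to produce the matching upper bound by testing with $f=e^{-|x|^2/4}$. This is the natural extremizer, since $\nabla(e^{-|x|^2/4})=-\frac12 x^T e^{-|x|^2/4}$ turns the Young step above into an equality; moreover $f^2\,d\mu=e^{-|x|^2/4}\,d\sigma$, so under $0\le a<\frac{1}{2\sqrt2}$ Theorem \ref{ineq5} gives $\int_\Sigma f^2\,d\mu<\infty$, and combined with $|H|(x)\le a|x|+b$ and the volume bound $Vol(B_r(0)\cap\Sigma)\le C(\alpha)e^{\alpha r^2/4}$ it also forces $\int_\Sigma|x|^2e^{-|x|^2/4}\,d\sigma<\infty$, hence $\int_\Sigma|\nabla f|^2\,d\mu<\infty$ and (via $|A|^2=H^2-Scal_\Sigma$ with $Scal_\Sigma$ constant) $\int_\Sigma|A|^2f^2\,d\mu<\infty$. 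Computing its Rayleigh quotient through the weighted divergence identity $\int_\Sigma|x^T|^2e^{-|x|^2/4}\,d\sigma=2n\int_\Sigma e^{-|x|^2/4}\,d\sigma+4\int_\Sigma H^2e^{-|x|^2/4}\,d\sigma$ gives $(-\int_\Sigma fLf\,d\mu)\big/\int_\Sigma f^2\,d\mu=\frac{n+1}{2}+\big(\int_\Sigma Scal_\Sigma\,e^{-|x|^2/4}\,d\sigma\big)\big/\big(\int_\Sigma e^{-|x|^2/4}\,d\sigma\big)$, which equals $\frac{n+1}{2}+Scal_\Sigma$ exactly because $Scal_\Sigma$ is constant; with the lower bound this forces equality. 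The main obstacle is analytic rather than computational: one must verify that $e^{-|x|^2/4}$ genuinely lies in the form domain of $L$, so that $\mu_1$ does not exceed its Rayleigh quotient, by approximating it in the form norm with compactly supported functions. This is exactly where properness and the volume growth estimate of Theorem \ref{ineq5} are invoked, to bound the cut-off errors and to guarantee that the weighted divergence theorem and all the weighted integrals above converge.
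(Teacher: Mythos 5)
Your proposal is correct and is essentially the paper's own argument: your weighted Poincar\'e inequality $\int_\Sigma|\nabla f|^2\,d\mu\geq\int_\Sigma\bigl(\tfrac{n}{2}+H^2\bigr)f^2\,d\mu$, obtained from $\mathcal{L}|x|^2=|x|^2+2n$ plus Young's inequality, is exactly the inequality the paper derives by setting $w=\log v=-\tfrac{|x|^2}{4}$ for the function $v=e^{-\frac{|x|^2}{4}}$ satisfying $\mathscr{L}v+(\tfrac{n}{2}+H^2)v=0$, the two computations differing only by the substitution $w=-\tfrac{|x|^2}{4}$. Your equality step---testing the Rayleigh quotient of $L$ with $e^{-\frac{|x|^2}{4}}$, justified by cut-offs and the integrability supplied by Theorem \ref{ineq5}---is precisely the paper's Theorem \ref{staab2-1} (hence Theorem \ref{staab2}) specialized to $\alpha=1$, which is exactly what the paper's proof of Theorem \ref{staab} invokes.
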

 In \cite{cheng2018}, the second author of the present paper and Zhou proved that the mean convex self-expanders are $L$-stable. Here  Theorem \ref{staab}  implies that
  \begin{cor}\label{cor-stab} Let $\Sigma$ be a complete immersed self-expander hypersurface in $\mathbb{R}^{n+1}$. If the scalar curvature of  $\Sigma$ satisfies 
  	\begin{align*}
  	Scal_{\Sigma}\geq -\frac{n+1}{2},
  	\end{align*}
  	then $\Sigma$ is $L$-stable.
  	\end{cor}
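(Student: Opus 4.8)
The plan is to read off the conclusion directly from the spectral lower bound established in Theorem \ref{staab}, with essentially no additional work. Recall from the discussion preceding Theorem \ref{staab} that $L$-stability is, by definition, the statement that $\mu_1 \geq 0$, where $\mu_1$ denotes the bottom of the spectrum of the operator $L = \mathcal{L} + |A|^2 - \frac{1}{2}$. Hence it suffices to verify that $\mu_1 \geq 0$ under the stated curvature hypothesis.

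First I would invoke Theorem \ref{staab}, which applies to \emph{any} complete immersed self-expander hypersurface in $\mathbb{R}^{n+1}$ and provides the lower bound
$$\mu_1 \geq \frac{n+1}{2} + \inf_{x\in\Sigma} Scal_{\Sigma}.$$
Next, the pointwise hypothesis $Scal_{\Sigma} \geq -\frac{n+1}{2}$ passes to the infimum, giving $\inf_{x\in\Sigma} Scal_{\Sigma} \geq -\frac{n+1}{2}$. Substituting this into the displayed inequality yields
$$\mu_1 \geq \frac{n+1}{2} - \frac{n+1}{2} = 0,$$
which is precisely the definition of $L$-stability, as desired.

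Since all of the analytic content—the test-function and Rayleigh-quotient estimate underlying the spectral lower bound—is already carried out in the proof of Theorem \ref{staab}, there is no genuine obstacle in the corollary itself; it is a direct specialization. The only point requiring a moment's care is that the \emph{inequality} part of Theorem \ref{staab} holds with no auxiliary assumptions (properness, the growth condition $|H|(x)\leq a|x|+b$, and constant scalar curvature are invoked only for the equality case). Thus the bound is legitimately available for an arbitrary complete immersed self-expander hypersurface, which is exactly the generality in which Corollary \ref{cor-stab} is stated, and the deduction goes through.
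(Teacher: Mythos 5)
Your proof is correct and matches the paper's own argument: the paper deduces Corollary \ref{cor-stab} directly from the lower bound $\mu_1\geq\frac{n+1}{2}+\inf_{x\in\Sigma}Scal_{\Sigma}$ of Theorem \ref{staab}, exactly as you do. Your observation that the inequality part of Theorem \ref{staab} requires none of the auxiliary hypotheses (properness, mean curvature growth, constant scalar curvature), which are needed only for the equality case, is also accurate and is precisely why the deduction is valid in the stated generality.
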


 The authors in \cite{cheng2018} also proved that the inequality  $\mu_1\leq \lambda_1+\frac12$ holds on complete properly immersed self-expander hypersurfaces. In this paper, we obtain  another  upper bound for $\mu_1$. More precisely,
\begin{theorem} \label{staab2} Let $\Sigma$ be  a complete properly immersed self-expander hypersurface in $\mathbb{R}^{n+1}$.   Assume  that its mean curvature $H$ satisfies   $|H|(x)\leq a|x|+b$,  $x\in\Sigma$, for some constants  $0\leq a<\frac{1}{2\sqrt{2}}$ and $b>0$.   Then the bottom $\mu_1$ of the spectrum of the $L$-stability operator $L$  satisfies
	\begin{align}\label{bee1}
	\mu_1
	&\leq \frac{n+1}{2}+\frac{\int_{\Sigma}Scal_{\Sigma} e^{-\frac{|x|^2}{4}}d\sigma}{\int_{\Sigma}e^{-\frac{|x|^2}{4}}d\sigma}.
	\end{align}
	If $\mu_1>-\infty$, then $\int_{\Sigma}Scal_{\Sigma}e^{-\frac{|x|^2}{4}}d\sigma<\infty$ and the equality \eqref{bee1}
	holds if and only if the curvature $Scal_{\Sigma}$ of $\Sigma$ is constant.
\end{theorem}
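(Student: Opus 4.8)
The plan is to estimate $\mu_1$ from above by inserting the single explicit test function $f=e^{-\frac{|x|^2}{4}}$ into the variational characterization of the bottom of the spectrum. Writing $\phi=\frac{|x|^2}{4}$, the operator $L$ is self-adjoint with respect to the weighted measure $e^{\phi}\,d\sigma$ (the measure in whose weighted volume the self-expander is critical), so
\[
\mu_1=\inf_{f}\frac{-\int_\Sigma (Lf)\,f\,e^{\phi}\,d\sigma}{\int_\Sigma f^2\,e^{\phi}\,d\sigma}.
\]
The whole argument turns on the pointwise identity
\[
L\!\left(e^{-\phi}\right)=-\left(\tfrac{n+1}{2}+Scal_\Sigma\right)e^{-\phi},
\]
which displays $e^{-\phi}$ as an exact eigenfunction of $L$ precisely when $Scal_\Sigma$ is constant. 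Note that with $f=e^{-\phi}$ the weight $e^{\phi}$ cancels against $(e^{-\phi})^2=e^{-2\phi}$, so the Rayleigh quotient collapses to the Gaussian integrals $\int_\Sigma(\cdots)e^{-\frac{|x|^2}{4}}d\sigma$ appearing in \eqref{bee1}; admissibility of $f=e^{-\phi}$ in this weighted $L^2$ is exactly the finiteness $\int_\Sigma e^{-\frac{|x|^2}{4}}d\sigma<\infty$ supplied by Theorem \ref{ineq5}, which is where the hypothesis $a<\frac{1}{2\sqrt2}$ enters.

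To derive the identity I would first compute $\Delta_\Sigma\phi$. From $\operatorname{div}_\Sigma x=n$ and, using the self-expander equation ${\bf H}=\frac12 x^{\perp}$ in the form $x^{\perp}=2H\nu$, the relation $\operatorname{div}_\Sigma(x^{\perp})=\operatorname{div}_\Sigma(2H\nu)=-2H^2$, one gets $\operatorname{div}_\Sigma(x^{T})=n+2H^2$ and hence $\Delta_\Sigma\phi=\frac{n}{2}+H^2$. Substituting $e^{-\phi}$ into $\mathcal{L}=\Delta+\frac12\langle x,\nabla\cdot\rangle$, the two terms $\frac14|x^{T}|^2$ coming from $\Delta(e^{-\phi})$ and from the drift cancel, leaving $\mathcal{L}(e^{-\phi})=-(\frac{n}{2}+H^2)e^{-\phi}$; adding $(|A|^2-\frac12)e^{-\phi}$ and using the Gauss equation $Scal_\Sigma=H^2-|A|^2$ yields the displayed identity. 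Feeding $f=e^{-\phi}$ into the Rayleigh quotient (rigorously through cutoffs $\eta_R e^{-\phi}$ and a limit $R\to\infty$, the gradient error tending to $0$ because $\int_\Sigma|x|^2 e^{-\frac{|x|^2}{4}}d\sigma<\infty$) and using the identity gives numerator $\int_\Sigma(\frac{n+1}{2}+Scal_\Sigma)e^{-\frac{|x|^2}{4}}d\sigma$ and denominator $\int_\Sigma e^{-\frac{|x|^2}{4}}d\sigma$, which is precisely \eqref{bee1}.

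For the integrability statement I would run the same cutoff family through the form lower bound $-\int_\Sigma(Lf)f\,e^{\phi}\,d\sigma\ge\mu_1\int_\Sigma f^2 e^{\phi}\,d\sigma$ available once $\mu_1>-\infty$: bounding the gradient term by $\frac14\int_\Sigma|x^{T}|^2 e^{-\frac{|x|^2}{4}}d\sigma\le\frac14\int_\Sigma|x|^2 e^{-\frac{|x|^2}{4}}d\sigma<\infty$ and letting $R\to\infty$ forces $\int_\Sigma|A|^2 e^{-\frac{|x|^2}{4}}d\sigma<\infty$ by monotone convergence; together with $\int_\Sigma H^2 e^{-\frac{|x|^2}{4}}d\sigma<\infty$ (from $|H|\le a|x|+b$ and the volume bound of Theorem \ref{ineq5}, which for $a<\frac1{2\sqrt2}$ may be taken with exponent $\alpha<1$) this gives $\int_\Sigma|Scal_\Sigma|e^{-\frac{|x|^2}{4}}d\sigma<\infty$. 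Finally, for the equality case: if \eqref{bee1} is an equality then $e^{-\phi}$ realizes the infimum, hence is a genuine first eigenfunction $L(e^{-\phi})=-\mu_1 e^{-\phi}$, and comparing with the identity forces $Scal_\Sigma\equiv\mu_1-\frac{n+1}{2}$, a constant. Conversely, if $Scal_\Sigma$ is constant the identity shows the positive function $e^{-\phi}\in L^2(e^{\phi}d\sigma)$ is an eigenfunction of $L$, and a positive $L^2$ eigenfunction must sit at the bottom of the spectrum, giving equality. I expect the main difficulty to be exactly these last analytic points — justifying the non-compactly-supported $e^{-\phi}$ as a test function with no boundary contribution at infinity, upgrading attainment of the infimum to the eigenfunction equation, and invoking the standard (but non-formal) fact that a positive $L^2$ eigenfunction realizes the bottom of the spectrum.
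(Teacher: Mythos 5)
Your proposal is correct and is essentially the paper's own proof: the paper likewise tests the Rayleigh quotient of $L$ with $v=e^{-\frac{|x|^2}{4}}$ (obtained as the $\alpha=1$ case of the family $e^{-\frac{\alpha+1}{8}|x|^2}$ used in Theorem \ref{staab2-1}), uses the same identity $\mathscr{L}v+\bigl(\tfrac{n}{2}+H^2\bigr)v=0$, hence $Lv=-\bigl(\tfrac{n+1}{2}+Scal_{\Sigma}\bigr)v$, the same cutoff and monotone-convergence argument (with admissibility supplied by Theorem \ref{ineq5} and Lemmas \ref{spn9} and \ref{spn10}, which is exactly where $a<\frac{1}{2\sqrt{2}}$ enters), the same deduction of $\int_{\Sigma}|A|^2e^{-\frac{|x|^2}{4}}d\sigma<\infty$ from $\mu_1>-\infty$, and the same ``attainment of the infimum forces the eigenfunction equation'' step for the forward equality direction. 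The only minor deviation is the converse: you invoke the standard fact that a positive $L^2$ eigenfunction realizes the bottom of the spectrum, whereas the paper combines the upper bound with the lower bound $\mu_1\geq\frac{n+1}{2}+\inf_{x\in\Sigma}Scal_{\Sigma}$ of Theorem \ref{staab} --- two equivalent ways of finishing, since the proof of that lower bound is precisely the logarithmic-substitution argument behind the fact you cite.
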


 \begin{remark} In the more general case of $0\leq a<\frac{1}{2}$, we also obtain  a general upper bound estimate for $\mu_1$. See Theorem \ref{staab2-1}.
 \end{remark}

 Theorems  \ref{staab} and \ref{staab2} have the following consequence:
\begin{cor}\label{corr1}
	Let $\Gamma$ be  a complete  immersed self-expander curve in $\R^2$ and $\Sigma$ be  the self-expander hypersurface $\Gamma\times\R^{n-1}$ with the product metric, where $n\geq 1$. Then the bottom $\mu_1$ of the spectrum of the $L$-stability operator on $\Sigma$ is $\frac{n+1}{2}$. In particular, the bottom  of the spectrum of the $L$-stability operator of $\Gamma$ is $1$.
\end{cor}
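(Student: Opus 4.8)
The plan is to reduce the whole statement to the single fact that the product self-expander $\Sigma=\Gamma\times\R^{n-1}$ has identically vanishing scalar curvature, and then to squeeze $\mu_1$ between the bounds of Theorems \ref{staab} and \ref{staab2}. First I would record the geometry of the product: taking the unit normal of $\Sigma$ to be $N=(\nu_\Gamma,0)\in\R^2\times\R^{n-1}$ with $\nu_\Gamma$ the unit normal of $\Gamma$, the $\R^{n-1}$ factor is totally geodesic, so the shape operator of $\Sigma$ has eigenvalues $\kappa,0,\dots,0$, where $\kappa$ is the geodesic curvature of $\Gamma$. Hence $H=\kappa$, $|A|^2=\kappa^2$, and therefore $Scal_{\Sigma}=H^2-|A|^2\equiv0$; a one-line check that $\tfrac12 x^{\perp}={\bf H}_{\Sigma}$ confirms that $\Sigma$ is a self-expander whenever $\Gamma$ is. With $Scal_{\Sigma}\equiv0$, Theorem \ref{staab} at once gives the lower bound $\mu_1\ge\frac{n+1}{2}+\inf_{x\in\Sigma}Scal_{\Sigma}=\frac{n+1}{2}$.

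For the matching upper bound I would invoke Theorem \ref{staab2}, which gives $\mu_1\le\frac{n+1}{2}+\frac{\int_{\Sigma}Scal_{\Sigma}\,e^{-|x|^2/4}d\sigma}{\int_{\Sigma}e^{-|x|^2/4}d\sigma}=\frac{n+1}{2}$. The catch is that Theorem \ref{staab2} (and the equality clause of Theorem \ref{staab}) demands that $\Sigma$ be properly immersed with $|H|\le a|x|+b$ for some $a<\frac{1}{2\sqrt2}$; since $H=\kappa$ and the self-expander equation only yields the free bound $|\kappa|=\tfrac12|\langle\gamma,\nu_\Gamma\rangle|\le\tfrac12|x|$, i.e. $a=\tfrac12$, this is not directly good enough. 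The hard part is therefore to prove that a complete self-expander curve in $\R^2$ is automatically proper and of bounded curvature, so that one may take $a=0$.

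To do this I would work with the structure equations of $\Gamma$ in arclength, $\gamma'=T$, $T'=\kappa\nu_\Gamma$, $\nu_\Gamma'=-\kappa T$, and the self-expander relation $\kappa=\tfrac12\langle\gamma,\nu_\Gamma\rangle$. Introducing $u=\langle\gamma,\nu_\Gamma\rangle=2\kappa$ and $v=\langle\gamma,T\rangle$, a short computation yields the system $u'=-\tfrac12 uv$ and $v'=1+\tfrac12 u^2\ge1$. Thus $v$ is strictly increasing with $v(s)\to\pm\infty$ as $s\to\pm\infty$, while $(\log|u|)'=-\tfrac12 v$ together with these limits forces $u\to0$ at both ends. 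Consequently $\kappa=u/2$ is bounded on $\Gamma$, and $|\gamma|^2=u^2+v^2\to\infty$, so $\Gamma$---and hence $\Sigma=\Gamma\times\R^{n-1}$---is properly immersed with $|H|=|\kappa|\le b$ for a constant $b>0$. This verifies the hypotheses of Theorems \ref{staab} and \ref{staab2} with $a=0<\frac{1}{2\sqrt2}$; I expect this ODE analysis to be the only real obstacle, the rest being substitution.

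Combining the two bounds gives $\mu_1=\frac{n+1}{2}$. Finally, for the ``in particular'' assertion I would take $n=1$, so that $\R^{n-1}$ is a point, $\Sigma=\Gamma$, and the formula reads $\mu_1=\frac{1+1}{2}=1$, as claimed.
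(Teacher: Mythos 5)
Your proposal is correct, and its endgame is exactly the paper's: since $Scal_{\Sigma}\equiv 0$ for the product, squeeze $\mu_1$ between the unconditional lower bound of Theorem \ref{staab} and the upper bound of Theorem \ref{staab2}, whose hypotheses are satisfied once $\Gamma$ is proper with bounded curvature (so that one may take $a=0<\frac{1}{2\sqrt{2}}$). Where you differ is in how those hypotheses are verified. The paper quotes Halldorsson's classification (Theorem 6.1 of \cite{Hall}) for properness and proves Lemma \ref{cur3}, the conservation law $H^2=Ce^{-|x|^2/2}$ along $\Gamma$, for boundedness of $H$. You instead run a self-contained phase-plane analysis: with $u=\langle\gamma,\nu_\Gamma\rangle=2\kappa$ and $v=\langle\gamma,T\rangle$ one has $u'=-\frac12 uv$ and $v'=1+\frac12 u^2\geq 1$, whence $v\to\pm\infty$, $u\to 0$ at both ends, and $|\gamma|^2=u^2+v^2\to\infty$. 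This buys independence from Halldorsson's classification theorem---your observation that $v'\geq1$ forces properness is elementary and replaces a heavy citation---while the paper's route is shorter and its conservation law is precisely the first integral of your system, namely $\frac{d}{ds}\left[\log u^2+\frac12\left(u^2+v^2\right)\right]=0$, which yields the sharper quantitative decay $\kappa^2=Ce^{-|\gamma|^2/2}$ rather than mere boundedness. One small patch you should add: the step $(\log|u|)'=-\frac12 v$ presupposes that $u$ never vanishes; if $u(s_0)=0$ at a single point, then $u\equiv 0$ by uniqueness for the linear ODE $u'=-\frac12 uv$, so $\Gamma$ is a straight line through the origin, where properness and $H\equiv 0$ are immediate and the conclusion $\mu_1=\frac{n+1}{2}$ still follows. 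With that case noted, your argument is complete.
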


\begin{remark}
	Note that in Corollary \ref{corr1}   we do  not assume any hypothesis on the mean curvature. 
\end{remark}

 Noting that   the equality  in \eqref{bee1}  holds  if and only if the scalar curvature $ Scal_\Sigma $ is constant, we are
interested in characterizing self-expander hypersurfaces with constant scalar curvature, which is also of independent interest.  In this direction, we prove  Theorem \ref{scc} which states that  $\Gamma\times\mathbb{R}^{n-1}$  with the product metric are the  only  complete  self-expander hypersurfaces immersed in $\mathbb{R}^{n+1}$ with nonnegative constant scalar curvature,  where $\Gamma$ is a complete immersed self-expander curve in $\R^2$.   Theorem \ref{scc} is a  consequence of Proposition \ref{pmf} which states that a complete self-expander hypersurface immersed $\mathbb{R}^{n+1}$ different from a hyperplane and with nonnegative scalar curvature is of the form  $\Gamma\times\mathbb{R}^{n-1}$, where $\Gamma$ is a complete non-trivial self-expander  curve  immersed in $\mathbb{R}^2$, if and only if the scalar curvature attains a local minimum  on the open set $\{x\in\Sigma; H(x)\neq0\}$. In its proof, we use a result by Smoczyk which gave the equivalent property of the self-expander hypersurfaces $\Gamma\times\mathbb{R}^{n-1}$ (Theorem 5.1 in \cite{smoczyk2020self}). In general, it would be interesting to ask if the following is true.\\

\textbf{Problem:} Let $\Sigma$ be a complete immersed self-expander hypersurface in $\mathbb{R}^{n+1}$ with constant scalar curvature. Is it true that   $\Sigma=\Gamma\times\mathbb{R}^{n-1}$ with the product metric, where $\Gamma$ is a complete self-expander curve immersed in $\mathbb{R}^2$?\\

For self-expander surfaces in $\mathbb{R}^3$, we obtain the following result:
\begin{theorem}\label{scc6}
	Let $\Sigma$ be a complete  immersed self-expander surface in $\mathbb{R}^3$. If the scalar curvature of $\Sigma$ is constant, then  $\Sigma=\Gamma\times\mathbb{R}$  with the product metric, where $\Gamma$ is a complete self-expander curve (properly) immersed in $\mathbb{R}^2$.
\end{theorem}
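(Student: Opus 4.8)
The plan is to reduce everything to the already-proved Theorem~\ref{scc}, which covers the nonnegative-constant case, by disposing of the one extra possibility that a merely \emph{constant} scalar curvature permits, namely a negative constant. The first step is to specialize the scalar curvature to dimension two: for a surface $\Sigma$ immersed in $\R^3$ with principal curvatures $\kappa_1,\kappa_2$ one has $H=\kappa_1+\kappa_2$ and $|A|^2=\kappa_1^2+\kappa_2^2$, so by the Gauss equation
\[
Scal_{\Sigma}=H^2-|A|^2=2\kappa_1\kappa_2=2K,
\]
where $K$ is the intrinsic Gaussian curvature of $\Sigma$. Hence $Scal_{\Sigma}\equiv k$ is constant if and only if $K\equiv k/2$ is constant.

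Next I would fix the sign of $k$. If $k<0$, then $\Sigma$ is a complete surface of constant negative Gaussian curvature isometrically immersed in $\R^3$, which is impossible by Hilbert's theorem. Therefore $k\ge 0$, so $Scal_{\Sigma}$ is a nonnegative constant, and Theorem~\ref{scc} (with $n=2$) applies directly to give $\Sigma=\Gamma\times\R$ with the product metric, where $\Gamma$ is a complete immersed self-expander curve in $\R^2$ (the flat subcase corresponding to $\Gamma$ a straight line through the origin).

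It then remains only to upgrade $\Gamma$ from immersed to properly immersed. Parametrizing $\Gamma$ by arc length as $\gamma(s)$, with unit normal $N$ and signed curvature $\kappa$, the self-expander equation for the curve reads $\kappa=\tfrac12\langle\gamma,N\rangle$. Writing $f(s)=|\gamma(s)|^2$ and using the Frenet relation $\gamma''=\kappa N$, one computes
\[
f''(s)=2+2\kappa\langle\gamma,N\rangle=2+4\kappa^2\ge 2,
\]
so $f$ is strictly convex and $f(s)\ge f(0)+f'(0)\,s+s^2\to\infty$ as $|s|\to\infty$. Thus $|\gamma(s)|\to\infty$, the $\gamma$-preimage of any compact subset of $\R^2$ is compact, and $\Gamma$ is properly immersed, completing the argument.

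The genuinely new step, relative to Theorem~\ref{scc}, is the elimination of the negative-constant case, and this is where I expect the main obstacle to lie: it is the point at which completeness must be used in an essential, global way, through Hilbert's theorem, rather than through the local curvature-minimum mechanism underlying Proposition~\ref{pmf}. The passage from immersed to properly immersed is then just the elementary convexity estimate above.
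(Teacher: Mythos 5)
Your proposal is correct and takes essentially the same approach as the paper, whose own proof of Theorem \ref{scc6} consists exactly of invoking the classical Hilbert theorem to exclude a negative constant scalar curvature (equivalently, constant negative Gaussian curvature, since $Scal_{\Sigma}=2K$ for a surface) and then applying Theorem \ref{scc} with $n=2$. The only peripheral difference is that you establish the properness of $\Gamma$ by the elementary convexity estimate $f''=2+4\kappa^2\geq 2$ for $f=|\gamma|^2$, whereas the paper quotes Halldorsson's classification of complete self-expander curves (Theorem 6.1 in \cite{Hall}).
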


 Self-expander curves in $\mathbb{R}^2$ have been classified  (see the work of Ishimura \cite{Is} and Halldorsson \cite{Hall}). Theorem 6.1 in \cite{Hall} states that each of the complete self-expander curves immersed in $\mathbb{R}^2$  is convex, properly embedded and asymptotic to
the boundary of a cone with vertex at the origin. It is the graph of an even function.
The curves form a one-dimensional family parametrized by their distance to the
origin, which can take on any value in $[0, \infty)$.   \\

Theorems   \ref{staab2} and   \ref{scc6} have the following  consequence. 

\begin{cor}
	Let $\Sigma$ be a complete properly immersed self-expander surface  in $\R^{3}$. Assume that  $|H|(x)\leq a|x|+b$, $x\in\Sigma$, for some constants $0\leq a<\frac{1}{2\sqrt{2}}$ and $b>0$. Then 
	\begin{align}\label{spn66}
		\mu_1\leq\frac{3}{2}+\frac{\int_{\Sigma}Scal_{\Sigma} e^{-\frac{|x|^2}{4}}d\sigma}{\int_{\Sigma}e^{-\frac{|x|^2}{4}}d\sigma}.
	\end{align}
	If $\mu_1>-\infty$, then
	the equality \eqref{spn66} holds if and
	only if  $\Sigma=\Gamma\times\R$  with the product metric,  where $\Gamma$ is a  complete self-expander curve (properly) immersed in $\R^2$.
\end{cor}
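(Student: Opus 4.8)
The plan is to obtain this corollary as a direct combination of Theorem \ref{staab2} and Theorem \ref{scc6}, using the fact that a surface in $\mathbb{R}^3$ is precisely the hypersurface case $n+1=3$, i.e.\ $n=2$.

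First I would apply Theorem \ref{staab2} with $n=2$. The hypotheses match exactly: $\Sigma$ is a complete properly immersed self-expander hypersurface in $\mathbb{R}^3$ whose mean curvature satisfies $|H|(x)\leq a|x|+b$ with $0\leq a<\frac{1}{2\sqrt 2}$ and $b>0$. Hence Theorem \ref{staab2} yields the bound \eqref{bee1} with $\frac{n+1}{2}=\frac{3}{2}$, which is exactly \eqref{spn66}. Moreover, under the standing assumption $\mu_1>-\infty$, the equality clause of Theorem \ref{staab2} tells us that $\int_{\Sigma}Scal_{\Sigma}\,e^{-|x|^2/4}\,d\sigma<\infty$ and that equality in \eqref{spn66} holds if and only if $Scal_{\Sigma}$ is constant. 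It therefore remains only to identify the self-expander surfaces of constant scalar curvature.

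For the forward implication I would invoke Theorem \ref{scc6}: if equality holds then $Scal_{\Sigma}$ is constant, and Theorem \ref{scc6} forces $\Sigma=\Gamma\times\mathbb{R}$ with the product metric, where $\Gamma$ is a complete self-expander curve properly immersed in $\mathbb{R}^2$. For the converse I would simply check that such a product has constant scalar curvature: since $\Gamma\times\mathbb{R}$ carries the product metric and the $\mathbb{R}$-factor is flat and totally geodesic, the second fundamental form has a single nonzero principal curvature $\kappa$ (the geodesic curvature of $\Gamma$) together with a vanishing one in the $\mathbb{R}$-direction, so $H=\kappa$ and $|A|^2=\kappa^2$, giving $Scal_{\Sigma}=H^2-|A|^2\equiv 0$. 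Applying the equality clause of Theorem \ref{staab2} in the reverse direction then shows that equality in \eqref{spn66} holds, which closes the equivalence.

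Since essentially all of the analytic work is already carried out in Theorems \ref{staab2} and \ref{scc6}, there is no serious obstacle here; the only point requiring attention is the careful bookkeeping of the equality characterization together with the elementary observation that a product $\Gamma\times\mathbb{R}$ is scalar-flat, which is precisely what bridges the analytic equality condition ``$Scal_{\Sigma}$ constant'' with the geometric conclusion.
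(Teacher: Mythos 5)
Your proposal is correct and follows exactly the route the paper intends: the corollary is stated as a direct consequence of Theorem \ref{staab2} (applied with $n=2$, giving the bound \eqref{spn66} and the characterization of equality by constancy of $Scal_{\Sigma}$) combined with Theorem \ref{scc6}, and your converse step — observing that $\Gamma\times\mathbb{R}$ has principal curvatures $\kappa$ and $0$, hence $Scal_{\Sigma}=H^2-|A|^2\equiv 0$, so the equality clause of Theorem \ref{staab2} applies in reverse — is the same elementary bookkeeping the paper relies on (cf.\ the proof of Corollary \ref{corr1}, where $Scal_{\Sigma}\equiv 0$ for such products is noted).
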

For self-expander surfaces in $\mathbb{R}^3$, we also obtain the following result:
\begin{theorem}\label{sfc}
	Let $\Sigma$ be a complete properly immersed self-expander surface in $\mathbb{R}^3$. If  $\Sigma$ has nonpositive scalar curvature and the norm of its second fundamental form  is constant, then  $\Sigma$ is a plane $\mathbb{R}^2$ through the origin.
\end{theorem}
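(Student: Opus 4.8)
The plan is to reduce the statement to the constant–scalar–curvature classification of Theorem \ref{scc6}. The first point is that the two hypotheses interact: since $Scal_\Sigma=H^2-|A|^2\le 0$ and $|A|^2\equiv c$ is constant, we get $H^2\le c$, so $H$ is bounded and $|H|(x)\le a|x|+b$ holds with $a=0<\tfrac{1}{2\sqrt 2}$ and $b=\sqrt c$. This makes Theorem \ref{ineq5} available, giving finite Gaussian weighted volume and the growth $\mathrm{Vol}(B_r(0)\cap\Sigma)\le C_\epsilon e^{\epsilon r^2}$ for every $\epsilon>0$. The case $c=0$ is immediate: then $A\equiv0$, $\Sigma$ is totally geodesic hence a plane, and $\mathbf H=\tfrac12 x^\perp$ forces $x^\perp\equiv0$, so it passes through the origin. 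Thus I assume $c>0$ and aim to force a contradiction.

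The computational engine is the Simons identity for self-expanders. Starting from $\Delta A_{ij}=\nabla_i\nabla_j H+H(A^2)_{ij}-|A|^2A_{ij}$ and differentiating $H=\tfrac12\langle x,\nu\rangle$ twice (using $\nabla_i H=-\tfrac12A_{ij}x^T_j$, $\nabla_j x^T_k=\delta_{jk}+2HA_{jk}$ and Codazzi), one is led to the tensor eigenequation $\mathcal{L}A=-(|A|^2+\tfrac12)A$, its trace $\mathcal{L}H=-(|A|^2+\tfrac12)H$, and
\begin{equation*}
\tfrac12\,\mathcal{L}|A|^2=|\nabla A|^2-\tfrac12|A|^2-|A|^4 .
\end{equation*}
Since $|A|^2\equiv c$ the left side vanishes, giving the pointwise identity $|\nabla A|^2\equiv c\left(c+\tfrac12\right)>0$. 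Moreover $H$ solves the linear equation $\mathcal{L}H=-(c+\tfrac12)H$ and $H^2=c+Scal_\Sigma$, so proving $Scal_\Sigma$ constant is exactly proving $H^2$ constant.

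Once constancy of $Scal_\Sigma$ is in hand the conclusion is quick: Theorem \ref{scc6} gives $\Sigma=\Gamma\times\mathbb{R}$ with $\Gamma$ a complete self-expander curve, whence $|A|^2=\kappa_\Gamma^2\equiv c$ forces $\kappa_\Gamma$ to be a nonzero constant. But a complete curve of nonzero constant curvature is a circle, hence compact, contradicting the noncompactness of self-expanders. Therefore $\kappa_\Gamma\equiv0$, i.e. $c=0$ and $\Sigma$ is a plane through the origin — contradicting $c>0$. This disposes of everything \emph{provided} $Scal_\Sigma$ is constant.

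The constancy of $Scal_\Sigma$ is the main obstacle. Here I would first use that $\Sigma$ has bounded geometry: the Gauss equation gives $K=\tfrac12 Scal_\Sigma\in[-\tfrac c2,0]$, so the Bakry--Émery Ricci curvature of $(\Sigma,\mathcal{L})$ is bounded below, and the volume growth of Theorem \ref{ineq5} makes the $\mathcal{L}$-diffusion stochastically complete, so the Omori--Yau weak maximum principle holds for $\mathcal{L}$. Applying it to $w:=-Scal_\Sigma=c-H^2\in[0,c]$, whose drift Laplacian is $\mathcal{L}w=2(c+\tfrac12)H^2-2|\nabla H|^2$, yields $\sup w=c$, that is $\inf_\Sigma H^2=0$. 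The delicate step — the part I expect to be hardest — is to upgrade this to $H\equiv0$: the equation $\mathcal{L}H=-(c+\tfrac12)H$ by itself admits nonconstant bounded solutions, so constancy must come from the \emph{global} structure. I would combine the finite weighted volume from Theorem \ref{ineq5} with the sharp lower bound $\lambda_1\ge\tfrac n2+\inf_\Sigma H^2$ of \cite{cheng2018} (rigid only for the hyperplane through the origin) and the upper estimate of Theorem \ref{thm-3-hyper}, using $\mathcal{L}H=-(c+\tfrac12)H$ to realize $H$ as the relevant eigenfunction and pin $\lambda_1$, thereby forcing equality and the hyperplane. The genuine technical heart is controlling the competition between the naturally \emph{growing} expander weight $e^{|x|^2/4}$ and the decay of $H=\tfrac12\langle x,\nu\rangle$ (which $\to0$ along the asymptotically conical ends) so that the integration by parts and the eigenvalue comparison are legitimate.
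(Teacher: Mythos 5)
Your reduction has a genuine gap at its keystone. Everything you do is conditional on proving that $Scal_{\Sigma}$ is constant (equivalently, since $|A|^2\equiv c$, that $H^2$ is constant, which in the end must mean $H\equiv 0$), and this step is never actually carried out. The Omori--Yau argument, even granting the stochastic completeness you invoke, only yields $\inf_{\Sigma}H^2=0$; the equation $\mathcal{L}H=-(c+\tfrac12)H$ together with $\inf H^2=0$ does not force $H\equiv0$, as you yourself note. The plan you then sketch --- realizing $H$ as a weighted $L^2$ eigenfunction of $\mathcal{L}$ and comparing with the bounds $\lambda_1\geq \tfrac n2+\inf_\Sigma H^2$ and Theorem \ref{thm-3-hyper} --- faces a concrete obstruction you defer but do not resolve: the spectral theory here lives in $L^2(\Sigma, e^{|x|^2/4}d\sigma)$ with the \emph{growing} weight, so using $H$ as an eigenfunction requires $\int_{\Sigma}H^2e^{|x|^2/4}d\sigma<\infty$, and boundedness of $H$ gives no control whatsoever on this integral (Theorem \ref{ineq5} bounds integrals against the decaying weights $e^{-\alpha|x|^2/4}$ only). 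Moreover, even if $H$ were such an eigenfunction, its eigenvalue would be $c+\tfrac12$, and the comparison $1\leq\lambda_1\leq c+\tfrac12$ produces no contradiction. So the proposal, as written, proves the theorem only in the trivial case $c=0$.

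For comparison, the paper's proof is elementary and entirely local, needing neither Theorem \ref{scc6} nor any spectral or maximum-principle machinery. Properness gives a point $p\in\Sigma$ minimizing $|x|$; there $x^T=0$, hence $\nabla H(p)=0$ (since $2\nabla_{e_i}H=h_{ij}\langle x,e_j\rangle$). Diagonalizing $A$ at $p$, the conditions $\nabla H(p)=0$ and $\nabla|A|^2\equiv0$ force either $h_{11}=h_{22}$ or $h_{111}=h_{222}=0$ at $p$. In the first case $Scal_{\Sigma}(p)=|A|^2(p)\geq0$, which combined with $Scal_{\Sigma}\leq0$ gives $|A|(p)=0$; in the second case all components of $\nabla A$ vanish at $p$, while \eqref{Acon} and constancy of $|A|^2$ give $|\nabla A|^2=|A|^2(|A|^2+\tfrac12)$ everywhere, again forcing $|A|(p)=0$. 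Since $|A|$ is constant, $|A|\equiv0$ and $\Sigma$ is a plane through the origin. If you want to salvage your global route, you would need an independent proof that a complete self-expander surface with $|A|$ constant has constant scalar curvature --- which is essentially the full strength of the theorem --- so the local argument is both shorter and unavoidable here.
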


The rest of the paper is organized as follows:  In Section \ref{notation}, we recall some notations and  basic facts. In Section \ref{finiteness}, we prove Theorem \ref{tstimat} and apply it to prove  the finiteness of weighted volumes and volume growth estimate of   self-expanders, that is, Theorem \ref{ineq5}. We also prove Theorem \ref{ineq4}.  In Section \ref{hyperplane}, we prove rigidity Theorems. In Section \ref{results}, we obtain an upper bound for the bottom of the spectrum  of the drifted Laplacian $\mathcal{L}$.  In Section \ref{scalar}, we discuss self-expanders with constant scalar curvature and prove Theorem  \ref{scc6}. We also prove Theorem \ref{sfc}. In Section \ref{section-L}, we  obtain the  upper and  lower bounds for the bottom of the spectrum of the $L$-stability operator  $L$ and also discuss the $L$-stability of  self-expanders.

\section{Preliminaries}\label{notation}

In this section, we will recall some concepts and basic facts.\\

Assume that    $\left(M,\bar{g}\right)$ is a smooth  $m$-dimensional  Riemannian manifold.   Let $\Sigma$ be an $n$-dimensional immersed submanifold in $M$ with the induced metric $g$.
We will denote by  $d\sigma$ the volume form  of $\Sigma$. In this paper, unless otherwise specified, the notations with a bar,  for instance $\bar{\nabla}$ and  $\bar{\nabla}^2$,  denote the quantities
corresponding the metric $\bar{g}$ on $M$.  On the other hand, the notations  like  $\nabla, \Delta$ denote the quantities corresponding the intrinsic metric ${g}$ on $\Sigma$.\\

 The  isometric immersion $i: (\Sigma^n, g) \to (M^{n+k},\bar{g})$ is said to be properly immersed if,  for any compact subset $\Omega$ in $M$, the pre-image $i^{-1}(\Omega)$ is compact in $\Sigma$.\\




 Let $A$ denote the second fundamental form of $(\Sigma,g)$, that is,  at $p\in \Sigma$, 
$A(X,Y)= (\overline{\nabla}_XY)^\perp,$
where $X,Y\in T_p\Sigma$,  $\perp$ denotes the projection onto the normal bundle of $\Sigma$. 
The mean curvature vector ${\bf H}$ of $\Sigma$  is defined as 
 the trace of  $A$. If $\Sigma$ is a  hypersurface, its  mean curvature $H$   is defined by 
${\bf H}=-H{\bf n},$
where ${\bf n}$ is the unit normal field on $\Sigma$.  \\

Given a smooth function $f$ on $M$, define the weighted mean curvature vector ${\bf H}_f$ of a submanifold $(\Sigma, g)$ by ${\bf H}_f:={\bf H}+(\overline{\nabla }f)^{\perp} $.   $\Sigma$  is called $f$-minimal  if  its weighted mean curvature vector  ${\bf H}_f$ vanishes identically, or equivalently if it satisfies\begin{equation} \label{f-min}{\bf H}=-(\overline{\nabla} f)^\perp.
\end{equation}
If $\Sigma$ is a hypersurface, its   weighted  mean curvature  $H_f$ is defined by  $ {\bf H}_f =-H_f{\bf n}$. In particular $\Sigma$ is $f$-minimal if and only if the weighted mean curvature satisfies that
$H_f=H-\left<\bar{\nabla}f, {\bf n}\right>=0$ or equivalently
\begin{equation}
	H=\left<\bar{\nabla}f, {\bf n}\right>.
\end{equation}

The weighted volume of a measurable subset  $S\subset \Sigma$ with respect  to  the function $f$ is defined by
\begin{equation}\label{notation-eq-vol}V_f(S):=\int_S e^{-f}d\sigma.
\end{equation}

It is known that  an $f$-minimal submanifold  is a critical point of the weighted volume functional defined in (\ref{notation-eq-vol}). On the other hand,  it is also  a minimal submanifold under the conformal metric $\tilde{g}=e^{-\frac{2}{n}f}\bar{g}$ on $M$  (see, e.g. \cite{CMZ3}, \cite{CMZ}).
When $(M, \bar{g})$ is the Euclidean space $(\mathbb{R}^{n+k}, g_0)$, there are  very interesting examples of $f$-minimal submanifolds:
\begin{example}\label{example1} If $f=\frac{|x|^2}4$,  $-\frac{|x|^2}4$, and  $-\left<x,w\right>$ respectively,  where $w\in \mathbb{R}^{n+k}$ is a constant vector, an $n$-dimensional $f$-minimal submanifold $\Sigma$ is a  self-shrinker, self-expander and translator for MCF in the Euclidean space $\mathbb{R}^{n+k}$ respectively. 
\end{example}

 On the smooth metric measure space $(\Sigma, g, e^{-f})$, there is a very important second-order elliptic operator: the drifted Laplacian
$
\Delta_{f}=\Delta-\left\langle \nabla f,\nabla\cdot\right\rangle .
$
It is well known that $\Delta_f$ is a densely defined self-adjoint
operator in $L^2(\Sigma, e^{-f}d\sigma)$, i.e. for 
$u$ and $v$ in $C^{\infty}_0(\Sigma)$, it holds that 
\begin{equation}
\int_{\Sigma}(\Delta_{f}u) ve^{-f}d\sigma=-\int_{\Sigma}\left\langle \nabla u,\nabla v\right\rangle e^{-f}d\sigma.
\end{equation}

 Since we will study the spectrum problems in this paper, we recall some facts in spectral theory (see more details in,  e.g. \cite{G},  \cite{RS}). Consider the Schr$\ddot{\text{o}}$dinger operator on $\Sigma$:
$$S=\Delta_f+q, \quad q\in L^{\infty}_{loc}(\Sigma).$$
The  weighted $L^2$ spectrum of $S$  is called the spectrum of $S$ for short whenever there is no confusion.
The bottom $s_1$ of the spectrum of $S$ can be characterized by
\begin{align}\label{bottom-0}
\displaystyle s_1=\inf\left\{\frac{\int_{\Sigma}\left(|\nabla\varphi|^{2}-q\varphi^2\right)e^{-f}d\sigma}{\int_{\Sigma}\varphi^2e^{-f}d\sigma};  \varphi\in C_0^{\infty}(\Sigma),  \int_{\Sigma}\varphi^2e^{-f}d\sigma\neq 0\right\}.
\end{align}
In general, if $\Sigma$ is noncompact, the bottom $s_1$ may not be  the weighted $L^2$ eigenvalue and may take $-\infty$. \\

 Now we  give especial notations on self-expanders.  In the following, unless otherwise specified, let $\Sigma$ be an $n$-dimensional  self-expander in $\mathbb{R}^{n+k}, k\geq 1$, that is, $\Sigma$ satisfies the equation
\begin{align}{\bf H}=\dfrac{x^{\perp}}{2}
\end{align}

In the case of codimension $1$, the mean curvature $H$ of a self-expander $\Sigma$ satisfies that
\begin{align}H=-\frac12\langle x, {\bf n}\rangle.
\end{align}

Observe that taking  $f=-\frac{|x|^2}{4}$, a self-expander $\Sigma$ can be viewed as an $f$-minimal submanifold in $\mathbb{R}^{n+k}$ since it satisfies the equation (\ref{f-min}).

The weighted volume of a measurable subset  $S\subset \Sigma$ is given by
\begin{equation}\label{notation-expand-vol}V(S):=\int_S e^{\frac{|x|^2}{4}}d\sigma.
\end{equation}

The weighted $L^{2}$ inner product of functions $u$ and $v$ in  $L^2(\Sigma, e^{\frac{|x|^2}{4}}d\sigma)$ is defined by 
\[
\left\langle u,v\right\rangle _{L^2(\Sigma, e^{\frac{|x|^2}{4}}d\sigma)}=\int_{\Sigma}uve^{\frac{|x|^2}{4}}d\sigma.
\]

Denote by  $\mathcal{L}$ the drifted Laplacian on $\Sigma$, i.e.  $\mathcal{L}=\Delta+\frac12\left<x,\nabla\cdot\right>$.\\

The bottom $\lambda_1$ of the spectrum of $\mathcal{L}$  can be given by
\begin{align}\label{bottom-1}
\displaystyle\lambda_1=\inf\left\{\frac{\int_{\Sigma}|\nabla \varphi|^2e^{\frac{|x|^2}{4}}d\sigma}{\int_{\Sigma}\varphi^2e^{\frac{|x|^2}{4}}d\sigma};  \varphi\in C_0^{\infty}(\Sigma),  \int_{\Sigma}\varphi^2e^{\frac{|x|^2}{4}}d\sigma\neq 0\right\}.
\end{align}

From (\ref{bottom-1}), $\lambda_1$ is nonnegative.
 For self-expanders,   the stability operator for  $\Sigma$ appeared in the second variation formula of the weighted volume is a Schr$\ddot{\text{o}}$dinger operator:
\begin{equation} \label{L} L=\mathcal{L}+|A|^2-\frac12=\Delta+\frac12\left<x, \nabla\cdot\right>+|A|^2-\frac12.
\end{equation}

\begin{definition}A self-expander $\Sigma$ is said to be $L$-stable  if the following inequality holds  for all  $\varphi\in\mathit{C}^{\infty}_{0}(\Sigma)$,
\begin{equation}\label{f-stable-ine}
-\int_{\Sigma}\varphi (L\varphi)e^{\frac{|x|^2}{4}}d\sigma=\int_{\Sigma}\left(|\nabla\varphi|^{2}-\bigr(|A|^{2}-\frac12)\varphi^2\right)e^{\frac{|x|^2}{4}}d\sigma\geq 0.
\end{equation}
\end{definition}

 $L$-stability  of $\Sigma$ is equivalent to that   the second variation of its weighted volume is nonnegative for any compactly supported normal variation. Denote the bottom of the spectrum of $L$ by $\mu_1$.
$L$-stability  means  $\mu_1\geq 0$. \\

For self-expander hypersurfaces, the following equations are known (see, for instance,  \cite{cheng2018}).
\begin{prop}\label{simo}
	If $\Sigma \subset\R^{n+1}$ is a self-expander hypersurface, then
	\begin{align}\label{eH-1}
	&\mathcal{L}H=-(|A|^2+\frac{1}{2})H,\\
	&\mathscr{L}H^2=-H^2(2|A|^2+1)+2|\nabla H|^2,\\\label{Acon}
	&\mathscr{L}|A|^2=-|A|^2(2|A|^2+1)+2|\nabla A|^2,\\\label{esc}
	&\mathscr{L}(Scal_{\Sigma})=-Scal_{\Sigma}(2|A|^2+1)+2|\nabla H|^2 -2|\nabla A|^2.
	\end{align}
\end{prop}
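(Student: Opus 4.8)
The plan is to derive all four identities from a single computation of the drifted Laplacian of the scalar second fundamental form, using only that a self-expander hypersurface satisfies $H=-\frac12\langle x,\mathbf n\rangle$. I would work in a local orthonormal frame $e_1,\dots,e_n$ on $\Sigma$, write $h_{ij}$ for the components of $A$ (so $H=\sum_i h_{ii}$ and $|A|^2=\sum_{ij}h_{ij}^2$), decompose the position vector as $x=x^T-2H\mathbf n$, and fix signs via the Weingarten relation $\bar\nabla_{e_i}\mathbf n=\sum_j h_{ij}e_j$. The self-expander equation enters through exactly two structural formulas. Differentiating $H=-\frac12\langle x,\mathbf n\rangle$ and using $\bar\nabla_{e_i}x=e_i$ gives the gradient identity $\nabla_i H=-\frac12\sum_j h_{ij}x^T_j$; projecting $\bar\nabla_{e_i}x=e_i$ onto $T\Sigma$ gives $\nabla_i x^T_k=\delta_{ik}+2H\,h_{ik}$ (its normal projection merely reproduces the gradient formula).

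Next I would compute $\mathcal LH$. Taking a further covariant derivative of the gradient formula, substituting $\nabla_i x^T_k=\delta_{ik}+2H h_{ik}$, and applying the Codazzi relation $\sum_i\nabla_i h_{ij}=\nabla_j H$ yields $\Delta H=-\frac12\langle\nabla H,x^T\rangle-\frac12H-|A|^2H$. Adding the drift term $\frac12\langle x,\nabla H\rangle=\frac12\langle x^T,\nabla H\rangle$ cancels the first summand and leaves $\mathcal LH=-(|A|^2+\frac12)H$, the first identity. The second identity is then immediate from the product rule $\mathcal L(u^2)=2u\,\mathcal Lu+2|\nabla u|^2$ applied to $u=H$.

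The main computational step is the tensor identity $\mathcal Lh_{ij}=-(|A|^2+\frac12)h_{ij}$. Starting from Simons' identity in flat space, $\Delta h_{ij}=\nabla_i\nabla_j H+H\sum_k h_{ik}h_{kj}-|A|^2 h_{ij}$, I would compute $\nabla_i\nabla_j H$ from the gradient formula: this produces a cubic term $-H\sum_k h_{ik}h_{kj}$ that exactly cancels the cubic Simons term, together with $-\frac12 h_{ij}$ and a first-derivative term $-\frac12\sum_k(\nabla_i h_{jk})x^T_k$. Adding the drift $\frac12\sum_k x^T_k\nabla_k h_{ij}$ and invoking the full symmetry of $\nabla h$ (Codazzi) cancels the remaining first-derivative term, giving $\mathcal Lh_{ij}=-(|A|^2+\frac12)h_{ij}$. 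Contracting through $\mathcal L|A|^2=2\langle A,\mathcal LA\rangle+2|\nabla A|^2$ (evaluated in a frame normal at the point) gives the third identity, and subtracting it from the second, using $Scal_\Sigma=H^2-|A|^2$, gives the fourth.

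The main obstacle is bookkeeping rather than conceptual: one must fix the sign conventions relating $\bar\nabla_{e_i}\mathbf n$, $h_{ij}$, and $H=-\frac12\langle x,\mathbf n\rangle$ so that the cubic $H\sum_k h_{ik}h_{kj}$ terms and the $x^T$-derivative terms cancel as claimed, since a single sign error propagates and destroys the clean form. Beyond that, the argument is the standard self-shrinker-type computation carried out with the sign of the drift reversed, and the resulting identities also appear in \cite{cheng2018}.
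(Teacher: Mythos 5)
Your proposal is correct, and it checks out line by line under the conventions you fix: with $\bar{\nabla}_{e_i}\mathbf{n}=\sum_j h_{ij}e_j$ and $H=\sum_i h_{ii}$ one indeed has $\mathbf{H}=-H\mathbf{n}$, so the self-expander equation reads $H=-\frac{1}{2}\langle x,\mathbf{n}\rangle$; your two structural formulas $\nabla_iH=-\frac{1}{2}\sum_j h_{ij}x^T_j$ and $\nabla_i x^T_k=\delta_{ik}+2Hh_{ik}$ follow, the cubic term produced by differentiating the gradient formula cancels the cubic term in Simons' identity, and the Codazzi symmetry of $\nabla A$ cancels the leftover first-order term against the drift $\frac{1}{2}\langle x^T,\nabla h_{ij}\rangle$, giving $\mathcal{L}h_{ij}=-(|A|^2+\frac{1}{2})h_{ij}$, from which all four identities follow formally exactly as you describe. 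Note, however, that the paper does not prove Proposition \ref{simo} at all: it states the four equations as known and cites \cite{cheng2018}, so your contribution is precisely to supply the standard self-shrinker-style computation (with the sign of the drift reversed) that the citation points to; there is no in-paper argument to compare against. One point to watch if your computation were spliced into this paper: the lemma at the start of Section \ref{hyperplane}, which computes $\mathcal{L}_\alpha H$, implicitly uses the opposite Weingarten sign $\bar{\nabla}_{e_i}\mathbf{n}=-\sum_j h_{ij}e_j$, under which $\nabla_iH=+\frac{1}{2}\sum_j h_{ij}x^T_j$ and $\langle x,\nabla H\rangle=+\frac{1}{2}A(x^T,x^T)$. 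Since every identity in Proposition \ref{simo} is invariant under $\mathbf{n}\mapsto-\mathbf{n}$ (both sides are odd, or both even, in the orientation), this mismatch is harmless for the statement itself, but intermediate formulas would differ by signs, which is exactly the bookkeeping hazard you flagged.
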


\section{ Weighted  volumes of properly immersed  self-expanders }\label{finiteness}

In this section, motivated by Theorem 1.1  in \cite{CVZ}, Theorem 1.1 in \cite{CZ} and Theorem 5 in \cite{AM}, we  prove the following Theorem \ref{tstimat} which deals with   the growth of volumes of the level sets of adequate  functions on a general Riemannian manifold. 
Next, we prove Theorem \ref{ineq5}.  \\

Recall that a function $h$ on a  complete noncompact Riemannian manifold $M$ is said to be  proper if, for any bounded closed subset  $I\subset \mathbb{R}$, the inverse image $h^{-1}(I)$ is compact in $M$.

\begin{theorem}\label{tstimat}
	Let $(X,g)$ be a complete noncompact Riemannian manifold. Assume that $\alpha > 0$, $\beta> 0$, $a_0$, $a_1$ and $a_2$ are constants satisfying $a_2<\frac{\beta}{4}$. If $h$ is a proper nonnegative $C^2$ function on $X$ such that
	\begin{align}
	\Delta h-\alpha|\nabla h|^2+\beta h\leq a_2r^2+a_1r+a_0
	\end{align} 
	and
	\begin{align}
	\Delta h\leq a_2r^2+a_1r+a_0
	\end{align}
	hold on the sets $D_r=\{x\in X; 2\sqrt{h}\leq r\}$ for all $r>0$, then 
	\begin{itemize}
		\item[(i)] The integral 
		\begin{align}\label{dxm}
		\int_Xe^{-\alpha h}dv<\infty.
		\end{align}
		\item[(ii)] For all $r>0$ the volume of the set $D_r$ satisfies
		\begin{align}
		V(r)\leq Ce^{\varepsilon(a_2r^2+a_1r+a_0)+\frac{\alpha r^2}{4e^{\varepsilon \gamma}}}, \mbox{  for any } \varepsilon>0,
		\end{align}
		and 
		\begin{align}
		V(r)\leq Ce^{\frac{\alpha }{4}r^2},
		\end{align}
		where $\gamma=\frac{\beta}{\alpha}$ and $C=C(\alpha)=\int_Xe^{-\alpha h}dv<\infty.$
	\end{itemize}
\end{theorem}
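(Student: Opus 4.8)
The plan is to exploit the weight $e^{-\alpha h}$ together with the divergence theorem on sublevel sets, reducing everything to the pointwise sign of the drift Laplacian $\Delta h-\alpha|\nabla h|^2$ far out. First I would convert the family of hypotheses on $\{D_r\}$ into a single pointwise inequality: each $x\in X$ lies in $D_r$ with $r=2\sqrt{h(x)}$, so inserting this value of $r$ into the first hypothesis gives $\Delta h-\alpha|\nabla h|^2\le(4a_2-\beta)h+2a_1\sqrt h+a_0$ at $x$. Since $a_2<\beta/4$ the coefficient $4a_2-\beta$ is negative, so the right-hand side is a downward parabola in $\sqrt h$ and tends to $-\infty$; hence there are constants $h_0>0$ and $\delta>0$ with $\Delta h-\alpha|\nabla h|^2\le-\delta$ on $\{h\ge h_0\}$. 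Equivalently, writing $w=e^{-\alpha h}$ and using $\Delta w=-\alpha w(\Delta h-\alpha|\nabla h|^2)$, we obtain $\Delta w\ge\alpha\delta\,w>0$ on $\{h\ge h_0\}$.

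For part (i) I would integrate this over the annular region $A_r=\{h_0\le h\le r^2/4\}$, which is compact because $h$ is proper and nonnegative (choosing $h_0$ and $r^2/4$ to be regular values via Sard, so the bounding level sets are smooth). The divergence theorem gives $\int_{A_r}\Delta w=\int_{\partial A_r}\partial_\nu w$. On the outer boundary $\{h=r^2/4\}$ the flux is $\partial_\nu w=-\alpha e^{-\alpha r^2/4}|\nabla h|\le0$, hence contributes a nonpositive amount; on the inner boundary $\{h=h_0\}$ the outward normal points along $-\nabla h$, so the flux is $+\alpha e^{-\alpha h_0}|\nabla h|$ and contributes the fixed constant $\alpha e^{-\alpha h_0}J_0$ with $J_0=\int_{\{h=h_0\}}|\nabla h|$. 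Combining with $\int_{A_r}\Delta w\ge\alpha\delta\int_{A_r}w$ yields $\int_{A_r}e^{-\alpha h}\le\delta^{-1}e^{-\alpha h_0}J_0$ uniformly in $r$. Adding the integral over the compact set $\{h\le h_0\}$ and letting $r\to\infty$ proves $\int_X e^{-\alpha h}\,dv<\infty$, i.e. (i), and identifies the finite constant $C=C(\alpha)$.

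The first estimate in (ii) is then immediate: since $h\le r^2/4$ on $D_r$ we have $e^{-\alpha h}\ge e^{-\alpha r^2/4}$ there, so $C\ge\int_{D_r}e^{-\alpha h}\ge e^{-\alpha r^2/4}V(r)$, which is exactly $V(r)\le Ce^{\alpha r^2/4}$. For the sharper bound $V(r)\le Ce^{\varepsilon P(r)+\frac{\alpha r^2}{4}e^{-\varepsilon\gamma}}$ (with $P(r)=a_2r^2+a_1r+a_0$ and $\gamma=\beta/\alpha$) I would first reduce by Jensen's inequality applied to the convex function $t\mapsto e^{-\alpha t}$: over $D_r$ this gives $\int_{D_r}e^{-\alpha h}\ge V(r)\exp(-\alpha\,\bar h(r))$, where $\bar h(r)=V(r)^{-1}\int_{D_r}h\,dv$, hence $V(r)\le Ce^{\alpha\bar h(r)}$, and it suffices to bound the average $\bar h(r)$ by $\frac{\varepsilon}{\alpha}P(r)+\frac{r^2}{4}e^{-\varepsilon\gamma}$. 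To control $\int_{D_r}h\,dv$ I would feed the two differential inequalities into the divergence theorem against the test function $\frac{r^2}{4}-h$ (which vanishes on $\partial D_r$), using the identity $\int_{D_r}|\nabla h|^2=\int_{D_r}(\tfrac{r^2}{4}-h)\Delta h$ together with the weighted moment bound $\beta\int_{D_r}he^{-\alpha h}\le P(r)\int_{D_r}e^{-\alpha h}$ that comes from $\int_{D_r}(\Delta h-\alpha|\nabla h|^2)e^{-\alpha h}\ge0$, and then iterate this in $r$ as a Gronwall-type inequality that converts the factor $\beta$ into the exponential $e^{-\varepsilon\gamma}$.

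The main obstacle is exactly this last step. A single, blunt integration only reproduces the weak bound $\bar h(r)\lesssim r^2/4$, hence $V(r)\le Ce^{\alpha r^2/4}$ once more; extracting the genuine improvement, in which the coefficient of $r^2$ drops from $\tfrac\alpha4$ to $\varepsilon a_2+\tfrac\alpha4 e^{-\varepsilon\gamma}$, requires carefully tracking constants through the iteration scheme. This is precisely where the hypothesis $a_2<\beta/4$ does its work: for small $\varepsilon>0$ the gain $(1-e^{-\varepsilon\gamma})\tfrac\alpha4\approx\varepsilon\tfrac\beta4$ in the Gaussian exponent beats the cost $\varepsilon a_2$ of the polynomial factor exactly because $a_2<\beta/4$, so the refined estimate is strictly better than the crude one. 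A secondary, routine obstacle is the justification of the divergence theorem on the a priori only Lipschitz sublevel sets, handled by restricting to regular values of $h$ through Sard's theorem and using properness to keep every level set compact.
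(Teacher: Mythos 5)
Your argument for part (i) is correct and genuinely different from the paper's. You reduce the family of hypotheses to the single pointwise inequality $\Delta h-\alpha|\nabla h|^2\le(4a_2-\beta)h+2a_1\sqrt{h}+a_0$ (by evaluating the first hypothesis at $r=2\sqrt{h(x)}$), observe that $a_2<\beta/4$ makes the right-hand side eventually $\le-\delta$, and then run a divergence/flux argument for $w=e^{-\alpha h}$ on the compact annuli $\{h_0\le h\le r^2/4\}$, with the outer flux having a good sign and the inner flux independent of $r$. This is more elementary than the paper's mechanism, which is the monotonicity in $t$ of $I(t)=t^{-k(r)}\int_{\overline D_r}e^{-\alpha h/t^{\gamma}}dv$; notably, your route uses only the first differential inequality, whereas the paper needs both (the second one enters in \eqref{stim2} through the term $(1-t^{-\gamma})\Delta h$). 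Your derivation of the crude bound $V(r)\le Ce^{\alpha r^2/4}$ from (i) is also correct and is, if anything, cleaner than obtaining it as the $\varepsilon\to 0$ limit of the refined bound, as the paper does. Both proofs share the same mild technicality of working at regular values of $h$ and passing to a limit, so I do not count that against you.

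However, there is a genuine gap: the first estimate in (ii), namely $V(r)\le Ce^{\varepsilon(a_2r^2+a_1r+a_0)+\frac{\alpha r^2}{4e^{\varepsilon\gamma}}}$, is not proven, as you yourself concede. Your plan is Jensen's inequality $V(r)\le Ce^{\alpha\bar h(r)}$ followed by the average bound $\bar h(r)\le\frac{\varepsilon}{\alpha}P(r)+\frac{r^2}{4}e^{-\varepsilon\gamma}$, but the tools you bring to bear cannot produce that bound: integrating the first hypothesis against $e^{-\alpha h}$ controls the \emph{weighted} average, $\int_{D_r}h\,e^{-\alpha h}dv\le\frac{P(r)}{\beta}\int_{D_r}e^{-\alpha h}dv$, while Jensen requires the \emph{unweighted} average $\bar h(r)$; converting one into the other requires knowing how volume is distributed across the level sets of $h$, which is exactly the content of the estimate being proven, so the argument is circular, and the proposed ``Gronwall-type iteration'' is never actually specified. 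In the paper this refined inequality is precisely what the monotonicity of $I(t)$ delivers: integrating $I'(t)\le 0$ from $t=1$ to $t=e^{\varepsilon}$ gives $e^{-\varepsilon k(r)}\int_{\overline D_r}e^{-\alpha h/e^{\varepsilon\gamma}}dv\le\int_{\overline D_r}e^{-\alpha h}dv$ (this is \eqref{stim3}), after which $h\le r^2/4$ on $D_r$ yields \eqref{stim4} and hence the refined volume bound, with no average of $h$ ever needed. (In the paper that same inequality is also the engine for (i), via the annulus decomposition \eqref{stim5}--\eqref{stim6}; your flux argument replaces it for (i) and for the crude bound, but not for the refined one.) To complete your proof you would need either to establish such a comparison between $\int e^{-\alpha h}$ and $\int e^{-\alpha h/e^{\varepsilon\gamma}}$, or to give an honest proof of the unweighted average bound; as written, the refined estimate remains unproved.
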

\begin{remark}
If  $a_2=0$ and $a_1=0$,  \eqref{dxm} was obtained in \cite{CVZ} (see (1.6) of Theorem 1.1 in \cite{CVZ}). However,  in this case, the level sets $D_r$ have polynomial volume growth as proved in \cite{CVZ}. 
\end{remark}
\begin{remark}
 Taking $\alpha=\beta=1$  in Theorem \ref{tstimat}, we obtain Theorem 5 in \cite{AM}.  It worth mentioning   that  the restriction  $a_2<\frac14$ should be added in the assumption of Theorem 5 in \cite{AM}.
\end{remark}

\begin{proof}[Proof of Theorem \ref{tstimat}]
	Let $\gamma=\frac{\beta}{\alpha}$ and $k(r)=a_2r^2+a_1r+a_0$. Since $h$ is proper, the following integral $I(t)$ is well defined.
	\begin{align*}
	I(t)=\frac{1}{t^{k(r)}}\int_{\overline{D}_r}e^{\frac{-\alpha h}{t^{\gamma}}}dv, \mbox{ }t>0.
	\end{align*}
	\begin{align}\label{stim1}
	I'(t)=t^{-k(r)-1}\int_{\overline{D}_r}e^{\frac{-\alpha h}{t^{\gamma}}}\left( \frac{\beta h}{t^\gamma}-k(r)\right)dv. 
	\end{align}
	On the other hand,  for $  t\geq 1, \gamma>0,$
	\begin{align}\nonumber
	\int_{\overline{D}_r}\mbox{div}\left( e^{\frac{-\alpha h}{t^{\gamma}}}\nabla h\right)dv&= \int_{\overline{D}_r} e^{\frac{-\alpha h}{t^{\gamma}}}\left( \Delta h-\frac{\alpha}{t^\gamma}|\nabla h|^2\right)dv\\\nonumber 
	&=\int_{\overline{D}_r} e^{\frac{-\alpha h}{t^{\gamma}}}\left( (1-\frac{1}{t^{\gamma}})\Delta h+\frac{1}{t^\gamma}(\Delta h-\alpha|\nabla h|^2)\right)dv\\\nonumber
	&\leq \int_{\overline{D}_r} e^{\frac{-\alpha h}{t^{\gamma}}}\left[(1-\frac{1}{t^\gamma})k(r)+\frac{1}{t^{\gamma}}(-\beta h+k(r))\right]dv\\\label{stim2}
	&=\int_{\overline{D}_r} e^{\frac{-\alpha h}{t^{\gamma}}}\left( k(r)-\frac{\beta h}{t^\gamma}\right)dv.  
	\end{align}
	Substituting \eqref{stim2} into \eqref{stim1} gives
	\begin{align*}
	I'(t)\leq -t^{-k(r)-1}\int_{\overline{D}_r}\mbox{div}( e^{\frac{-\alpha h}{t^{\gamma}}}\nabla h)dv.
	\end{align*}
	At the regular value $r$ of $h$ and for $t\geq 1$, by Stokes' Theorem, we have
	\begin{align*}
	I'(t)&\leq-t^{-k(r)-1}\int_{\partial D_r}\left\langle e^{\frac{-\alpha h}{t^{\gamma}}}\nabla h,\frac{\nabla h}{|\nabla h|} \right\rangle dv\\
	&=-t^{-k(r)-1}\int_{\partial D_r} e^{\frac{-\alpha h}{t^{\gamma}}}|\nabla h|dv\leq 0.
	\end{align*}
	Integrating $I'(t)$ over $t$ from $1$ to $e^{\varepsilon}>1$, where $\varepsilon>0$, we get
	\begin{align}\label{stim3}
	\frac{1}{e^{\varepsilon k(r)}}\int_{\overline{D}_r} e^{\frac{-\alpha h}{e^{\varepsilon\gamma}}}dv\leq\int_{\overline{D}_r}e^{-\alpha h}dv.
	\end{align}
	Since the integral in \eqref{stim3} is right continuous  in $r$, \eqref{stim3} holds for all $r>0.$ \\
	Note $2\sqrt{h}\leq r$ over $\overline{D}_r$. \eqref{stim3} implies that, for all $r>0$,
	\begin{align}\nonumber
	\frac{1}{e^{\varepsilon k(r)+\frac{\alpha  r^2}{4e^{\varepsilon \gamma}}}}\int_{\overline{D}_r}dv&\leq\frac{1}{e^{\varepsilon k(r)}}\int_{\overline{D}_r} e^{\frac{-\alpha h}{e^{\varepsilon\gamma}}}dv\\\label{stim4}
	&\leq\int_{\overline{D}_r}e^{-\alpha h}dv.
	\end{align}
	Note that  for $r\geq 1$ 
	\begin{align}\nonumber
	\int_{\overline{D}_r}e^{-\alpha h}dv-\int_{\overline{D}_{r-1}}e^{-\alpha h}dv&=\int_{\overline{D}_r\setminus\overline{D}_{r-1}}e^{-\alpha h}dv\\\label{stim5}
	&\leq e^{\frac{-\alpha(r-1)^2}{4}}\int_{\overline{D}_r}dv.
	\end{align}
	By \eqref{stim4}, \eqref{stim5} 
	\begin{align}\nonumber
	&\int_{\overline{D}_r}e^{-\alpha h}dv-\int_{\overline{D}_{r-1}}e^{-\alpha h}dv\\\nonumber
	&\leq  e^{\varepsilon k(r)+\frac{\alpha  r^2}{4e^{\varepsilon \gamma}}-\frac{\alpha(r-1)^2}{4}}\int_{\overline{D}_r}e^{-\alpha h}dv\\\label{stim6}
	&=e^{(\varepsilon a_2+\frac{\alpha}{4e^{\varepsilon\gamma}}-\frac{\alpha}{4})r^2+(\varepsilon a_1+\frac{\alpha}{2})r+(\varepsilon a_0-\frac{\alpha}{4})}\int_{\overline{D}_r}e^{-\alpha h}dv.
	\end{align}
	Since $a_2<\frac{\beta}{4}$, there exists a very small $\varepsilon_0=\varepsilon_0(\alpha, \beta, a_2)$ such that 
	\begin{align*}
	(\varepsilon_0a_2+\frac{\alpha}{4e^{\varepsilon_0 \gamma}}-\frac{\alpha}{4})<0.
	\end{align*}
	Then there exists $r_0\geq1$ such that for $r\geq r_0$
	\begin{align*}
	e^{(\varepsilon_0 a_2+\frac{\alpha}{4e^{\varepsilon_0\gamma}}-\frac{\alpha}{4})r^2+(\varepsilon_0 a_1+\frac{\alpha}{2})r+(\varepsilon_0 a_0-\frac{\alpha}{4})}\leq e^{-r}.
	\end{align*}
	Substituting into \eqref{stim6} gives that
	\begin{align}
	\int_{\overline{D}_r}e^{-\alpha h}dv\leq\frac{1}{1-e^{-r}}\int_{\overline{D}_{r-1}}e^{-\alpha h}dv.
	\end{align}
	Then for any positive integer $N$, we have 
	\begin{align}
	\int_{\overline{D}_{r+N}}e^{-\alpha h}dv\leq \left( \prod_{i=0}^{N}\frac{1}{1-e^{-(r+i)}}\right) \int_{\overline{D}_{r-1}}e^{-\alpha h}dv.
	\end{align}
	Noting that the infinite product $\prod\limits_{i=0}^{\infty}\left( 1-e^{-(r+i)}\right) $ converges to a positive number and letting $N$  tend to infinity, we get that $\int_X e^{-\alpha h}dv< +\infty.$\\
	Moreover by \eqref{stim4}, for all $r>0$ and $\varepsilon>0$,
	\begin{align}\nonumber
	\frac{1}{e^{\varepsilon k(r)+\frac{\alpha  r^2}{4e^{\varepsilon \gamma}}}}\int_{\overline{D}_r}dv
	\leq\int_{\overline{D}_r}e^{-\alpha h}dv\leq\int_{X}e^{-\alpha h}dv.
	\end{align}
	Hence 
	\begin{align*}
	V(r)\leq C(\alpha)e^{\varepsilon k(r)+\frac{\alpha  r^2}{4e^{\varepsilon \gamma}}}=C(\alpha)e^{\varepsilon (a_2r^2+a_1r+a_0)+\frac{\alpha  r^2}{4e^{\varepsilon \gamma}}},
	\end{align*}
	where $C(\alpha)=\int_{X}e^{-\alpha h}dv.$\\
	Since $\varepsilon>0$ is arbitrary, letting $\varepsilon\rightarrow 0$ yields that 
	\begin{equation*}
	V(r)\leq C(\alpha)e^{\frac{\alpha }{4}r^2}.
	\end{equation*}
\end{proof}

\begin{proof}[Proof of Theorem \ref{ineq5}]
	 Take $h=\frac{|x|^2}{4}$, $x\in \Sigma$. Since $\Sigma$ is properly immersed in $\mathbb{R}^m$, $h(x)$ is proper on $\Sigma$.  We have that, on  $B_r(0)\cap\Sigma$,
	\begin{align*}\nonumber
	\Delta h&=\frac{n}{2}+\langle \overline{\nabla}h, {\bf H}\rangle=\frac{n}{2}+|{\bf H}|^2\\
	&\leq a^2r^2+2abr+(b^2+\frac{n}{2}).
	\end{align*} 
	 In the above, we used the hypothesis: $|{\bf H}|(x)\leq a|x|+b$, $x\in \Sigma$.  We also have that, on  $B_r(0)\cap\Sigma$,
	\begin{align}\nonumber
	\Delta h-\alpha|\nabla h|^2+\alpha h=&\frac{n}{2}+|{\bf H}|^2+\alpha |(\overline{\nabla}h)^\perp|^2\\\nonumber
	=&\frac{n}{2}+(1+\alpha)|{\bf H}|^2\\\nonumber
	\leq& (1+\alpha)a^2r^2+2(1+\alpha)abr
	+(1+\alpha)b^2+\frac{n}{2}.
	\end{align}

	Let $a_2=(1+\alpha)a^2$, $a_1=2(1+\alpha)ab$, $a_0=(1+\alpha)b^2+\frac{n}{2}$ and  $\beta=\alpha$.   For  $\alpha >\frac{4a^2}{1-4a^2}$,  where $0\leq a<\frac12$,  it holds that $a_2<\frac{\beta }{4}$. By applying Theorem \ref{tstimat}, we obtain that $\int_{\Sigma}e^{-\frac{\alpha}{4}|x|^2}d\sigma<\infty$ for all  $\alpha >\frac{4a^2}{1-4a^2}$ and the volume of $B_r(0)\cap\Sigma$ satisfies that, for all $r>0,$
	\begin{align*}
	 Vol(B_r(0)\cap\Sigma)\leq C(\alpha)e^{\frac{\alpha }{4}r^2}.
	\end{align*}
	 In the particular case of  $a<\frac{1}{2\sqrt{2}}$, since $a<\frac{1}{2\sqrt{2}}$ implies that  $\frac{4a^2}{1-4a^2}<1$,  we may  take $\alpha=1$.
\end{proof}

	 By an argument analogous to  the ones used in the proofs of Theorem 4.1 in \cite{CZ} and  Theorem 4 in \cite{AM}, we may prove the following  result: Let   $\Sigma$ be a complete $n$-dimensional immersed self-expander in $\R^m$, $n<m$. If there exists $\alpha>0$ such that   $\int_{\Sigma}e^{-\frac{\alpha}{4}|x|^2}d\sigma<\infty$, then  $\Sigma$ is properly immersed on $\R^m$.   
 Hence Theorem \ref{ineq5} has the following consequence.
\begin{cor}
	Let $\Sigma$ be a complete $n$-dimensional immersed self-expander in $\R^m$, $n<m.$ Assume that its mean curvature vector  ${\bf H}$  satisfies $|{\bf H}|(x)\leq a|x|+b$, $x\in\Sigma$, for some constants $0\leq a<\frac{1}{2}$ and $b>0$. Then for $\alpha>\frac{4a^2}{1-4a^2}$  the following statements are equivalent:
	\begin{itemize}
		\item [(i)] $\Sigma$ is properly immersed on $\R^m$.
		\item [(ii)] There exist constants $C=C(\alpha)$, $\overline{a}_0$, $\overline{a}_1$, $\overline{a}_2$, $\overline{a}_2<\frac{\alpha}{4}$, such that 
		\begin{align*}
			V(B_r(0)\cap\Sigma)\leq C(\alpha)e^{\overline{a}_2r^2+\overline{a}_1r+\overline{a}_0}.
		\end{align*}
		\item [(iii)] $\int_{\Sigma}e^{-\frac{\alpha}{4}|x|^2}d\sigma<\infty.$
	\end{itemize}
\end{cor}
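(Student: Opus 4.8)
The plan is to prove the three conditions equivalent by establishing the cycle $(i)\Rightarrow(ii)\Rightarrow(iii)\Rightarrow(i)$, which closes the loop and makes the separate implication $(i)\Rightarrow(iii)$ unnecessary. Throughout I fix $\alpha>\frac{4a^2}{1-4a^2}$, as in the statement.

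For $(i)\Rightarrow(ii)$ I would exploit the fact that Theorem \ref{ineq5} holds for \emph{every} exponent above the threshold $\frac{4a^2}{1-4a^2}$, not just for $\alpha$ itself. Since $\alpha>\frac{4a^2}{1-4a^2}$, the open interval $\left(\frac{4a^2}{1-4a^2},\,\alpha\right)$ is nonempty, so I pick some $\alpha'$ in it. Assuming $\Sigma$ is properly immersed, Theorem \ref{ineq5}(ii) applied with parameter $\alpha'$ yields $V(B_r(0)\cap\Sigma)\leq C(\alpha')e^{\frac{\alpha'}{4}r^2}$ for all $r>0$. Taking $\overline{a}_2=\frac{\alpha'}{4}$, $\overline{a}_1=\overline{a}_0=0$ and $C=C(\alpha')$ gives exactly the bound in $(ii)$, and crucially $\overline{a}_2=\frac{\alpha'}{4}<\frac{\alpha}{4}$ because $\alpha'<\alpha$.

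For $(ii)\Rightarrow(iii)$ I would run a layer-cake / integration-by-parts argument. Writing $\rho(p)=|i(p)|$ and $V(r)=V(B_r(0)\cap\Sigma)$, note that the hypothesis of $(ii)$ forces $V(r)<\infty$ for every $r$, so I may express the weighted volume as a Stieltjes integral $\int_{\Sigma}e^{-\frac{\alpha}{4}|x|^2}d\sigma=\int_0^{\infty}e^{-\frac{\alpha}{4}r^2}\,dV(r)$ and integrate by parts, obtaining a boundary term together with $\int_0^{\infty}\frac{\alpha}{2}\,r\,e^{-\frac{\alpha}{4}r^2}V(r)\,dr$. Inserting $V(r)\leq Ce^{\overline{a}_2r^2+\overline{a}_1r+\overline{a}_0}$ makes both the boundary term and the integrand decay like $e^{(\overline{a}_2-\frac{\alpha}{4})r^2}$ times a polynomial factor; since $\overline{a}_2-\frac{\alpha}{4}<0$ this is a convergent Gaussian-type integral, so $(iii)$ follows. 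Finally, $(iii)\Rightarrow(i)$ is precisely the properness result recorded in the remark immediately preceding the corollary (established by the arguments of Theorem 4.1 in \cite{CZ} and Theorem 4 in \cite{AM}): finiteness of $\int_{\Sigma}e^{-\frac{\alpha}{4}|x|^2}d\sigma$ for some $\alpha>0$ forces $\Sigma$ to be properly immersed, which I may simply quote.

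The steps $(i)\Rightarrow(ii)$ and $(ii)\Rightarrow(iii)$ are routine, and $(iii)\Rightarrow(i)$ is an already-available result, so there is no deep obstacle. The one point requiring care is the \emph{strict} inequality $\overline{a}_2<\frac{\alpha}{4}$ demanded in $(ii)$: it is exactly what guarantees convergence of the Gaussian integral in $(ii)\Rightarrow(iii)$, and the reason it is available for free in $(i)\Rightarrow(ii)$ is that the admissible range of exponents in Theorem \ref{ineq5} is an interval open at its lower end $\frac{4a^2}{1-4a^2}<\alpha$, leaving room to choose $\alpha'<\alpha$.
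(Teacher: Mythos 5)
Your proposal is correct and follows exactly the route the paper intends: the paper gives no explicit proof, presenting the corollary as an immediate consequence of Theorem \ref{ineq5} together with the properness result ($\int_{\Sigma}e^{-\frac{\alpha}{4}|x|^2}d\sigma<\infty$ implies properness) quoted in the remark just before it, which is precisely your cycle $(i)\Rightarrow(ii)\Rightarrow(iii)\Rightarrow(i)$ with the middle implication filled in by a routine Gaussian-tail estimate. Your device of applying Theorem \ref{ineq5} at an intermediate exponent $\alpha'\in\left(\frac{4a^2}{1-4a^2},\alpha\right)$ is a worthwhile clarification, since Theorem \ref{ineq5}(ii) as stated only yields the borderline coefficient $\frac{\alpha}{4}$ rather than the strict inequality $\overline{a}_2<\frac{\alpha}{4}$ demanded in (ii); the paper would otherwise have to invoke the sharper $\varepsilon$-form of the bound inside Theorem \ref{tstimat}.
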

\begin{proof}[Proof of Theorem \ref{ineq4}] 
	  Let  $u=e^{-\frac{\alpha}{4}|x|^2}$. Since $\Sigma$ is properly immersed, using $\Delta |x|^2=2n+4|{\bf H}|^2$ and  Stokes' Theorem, we have
	\begin{align}
	\int_{ \big(B_t(0)\setminus B_{r_0}(0)\big)\cap \Sigma}u 
	&=\frac{1}{2n}\int_{ \big(B_t(0)\setminus B_{r_0}(0)\big)\cap \Sigma}u\Delta |x|^2\nonumber\\
	&\qquad -\frac{2}{n}\int_{  \big(B_t(0)\setminus B_{r_0}(0)\big)\cap \Sigma}u|{\bf H}|^2\nonumber\\
	&=-\frac{1}{n}\int_{\big(B_t(0)\setminus B_{r_0}(0)\big)\cap \Sigma}\langle \nabla u, x^T\rangle +\frac{1}{n}\int_{\partial B_t(0)\cap \Sigma}u|x^T|\nonumber\\
	&\qquad -\frac{1}{n}\int_{\partial B_{r_0}(0)\cap \Sigma}u|x^T|-\frac{2}{n}\int_{ \big(B_t(0)\setminus B_{r_0}(0)\big)\cap \Sigma}u|{\bf H}|^2.\label{ineq1}
	\end{align}
	On the other hand, using co-area formula we obtain
	\begin{align}\label{ineq2}
	&\frac{d}{d t}\left( \frac{1}{t^{n}}\int_{ \big(B_t(0)\setminus B_{r_0}(0)\big)\cap \Sigma}u\right)\nonumber \\
	&=-\frac{n}{t^{n+1}}\int_{ \big(B_t(0)\setminus B_{r_0}(0)\big)\cap \Sigma}u+\frac{1}{t^{n}}\int_{\partial B_t(0)\cap \Sigma}\frac{u|x|}{|x^T|}.
	\end{align} 
	Substituting  \eqref{ineq1} into  \eqref{ineq2} and noting that $|x|=t$ on $\partial B_t(0)\cap \Sigma$, we have
	\begin{align}\label{ineq2-2}
	 &\frac{d}{d t}\left( \frac{1}{t^{n}}\int_{ \big(B_t(0)\setminus B_{r_0}(0)\big)\cap \Sigma}u\right)\nonumber\\
	 =&\frac{1}{t^{n+1}}\int_{ \big(B_t(0)\setminus B_{r_0}(0)\big)\cap \Sigma}\left(2u|{\bf H}|^2+\langle \nabla u,  x^T\rangle \right)\nonumber\\
	&+\frac{1}{t^{n +1}}\left(\int_{\partial B_t(0)\cap \Sigma}u\frac{|x|^2-|x^T|^2}{|x^T|}+\int_{\partial B_{r_0}(0)\cap \Sigma}u|x^T|\right)\nonumber\\
	\geq&\frac{1}{t^{n+1}}
	\int_{ \big(B_t(0)\setminus B_{r_0}(0)\big)\cap \Sigma}\left(2u|{\bf H}|^2+\langle \nabla u, x^T\rangle \right)\nonumber\\
	=&\frac{1}{t^{n+1}}
	\int_{ \big(B_t(0)\setminus B_{r_0}(0)\big)\cap \Sigma} \left( 2|{\bf H}|^2-\frac{\alpha}{2}|x^T|^2\right)e^{-\frac{\alpha}{4}|x|^2} \nonumber\\
	=&\frac{1}{t^{n+1}}
	\int_{ \big(B_t(0)\setminus B_{r_0}(0)\big)\cap \Sigma} \left( 2(1+\alpha)|{\bf H}|^2-\frac{\alpha}{2}|x|^2\right) e^{-\frac{\alpha}{4}|x|^2}
	\end{align} 
	Choose  $\alpha=\frac{4a^2}{1-4a^2}$, where $0\leq a<\frac12$.  By the hypothesis $|{\bf H}|(x)\geq a|x|+b$   for  $|x|\geq r_0$, it follows that   
	\begin{align}\label{ineq3}
	\frac{\partial}{\partial t}\left( \frac{1}{t^{n}}\int_{\big(B_t(0)\setminus B_{r_0}(0)\big)\cap \Sigma} e^{-\frac{\alpha}{4}|x|^2}\right)&\geq0 \quad \text{for} \quad t\geq r_0.
	\end{align}
	Since $\Sigma$ is properly immersed, there exists some $t_0>r_0$ such that $$\int_{\big(B_{t_0}(0)\setminus B_{r_0}(0)\big)\cap \Sigma} e^{-\frac{\alpha}{4}|x|^2}>0.$$ 
	Integrating \eqref{ineq3} from $t_0$ to  $t>t_0$, we get 
	\begin{align*}
	\int_{\big(B_{t}(0)\setminus B_{r_0}(0)\big)\cap \Sigma} e^{-\frac{\alpha}{4}|x|^2}d\sigma\geq\frac{t^{n}}{t_0^{n}}\int_{\big(B_{t_0}(0)\setminus B_{r_0}(0)\big)\cap \Sigma} e^{-\frac{\alpha}{4}|x|^2}d\sigma.
	\end{align*}
	Letting $t\rightarrow\infty$ in the above inequality implies 
	\begin{align*}
	\int_{\Sigma} e^{-\frac{\alpha}{4}|x|^2}d\sigma=\infty.
	\end{align*}
	The particular case stated in the theorem holds by taking $a=\frac1{2\sqrt{2}}$ and $\alpha=1$.
	
\end{proof}

\section{Rigidity of hyperplanes} \label{hyperplane}

  In this section we study the rigidity property of  hyperplanes  as self-expanders. First we state the following equations:
 \begin{lemma}
 	Let $\Sigma$ be an immersed self-expander hypersurface in $\R^{n+1}$. Then for $\alpha\in \R$ it holds that
 	\begin{align}\label{spn7}
 	&\mathcal{L}_\alpha H=-\frac{1}{2}H-|A|^2H-\frac{\alpha+1}{2}\langle x,\nabla H\rangle,\\\label{spn8}
 	&\mathcal{L}_\alpha H=-\frac{1}{2}H-|A|^2H-\frac{\alpha+1}{4}A(x^T,x^T),
 	\end{align}
 	where the operator $\mathcal{L}_\alpha=\Delta-\frac{\alpha}{2}\langle x, \nabla \cdot\rangle$ and $x^T$ denotes the tangent component of $x$.
 \end{lemma}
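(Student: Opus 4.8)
The plan is to obtain both identities as immediate consequences of the formula $\mathcal{L}H=-(|A|^2+\tfrac12)H$ recorded in \eqref{eH-1} of Proposition \ref{simo}, where $\mathcal{L}=\Delta+\tfrac12\langle x,\nabla\cdot\rangle$ is the drifted Laplacian of the self-expander. First I would compare the two first-order operators: since $\mathcal{L}_\alpha=\Delta-\tfrac{\alpha}{2}\langle x,\nabla\cdot\rangle$ while $\mathcal{L}=\Delta+\tfrac12\langle x,\nabla\cdot\rangle$, for every smooth function $u$ one has
\begin{equation*}
\mathcal{L}_\alpha u=\mathcal{L}u-\frac{\alpha+1}{2}\langle x,\nabla u\rangle.
\end{equation*}
Applying this with $u=H$ and inserting $\mathcal{L}H=-\tfrac12 H-|A|^2H$ yields \eqref{spn7} directly, with no further computation.

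The second identity \eqref{spn8} then reduces to the single pointwise relation
\begin{equation*}
\langle x,\nabla H\rangle=\frac12\,A(x^T,x^T),
\end{equation*}
since substituting it into \eqref{spn7} replaces the term $\tfrac{\alpha+1}{2}\langle x,\nabla H\rangle$ by $\tfrac{\alpha+1}{4}A(x^T,x^T)$. Establishing this relation is the one genuine computation, and the step I expect to require the most care. I would differentiate the self-expander equation $H=-\tfrac12\langle x,{\bf n}\rangle$ along a local orthonormal frame $\{e_i\}$ of $T\Sigma$. Because $\overline{\nabla}_{e_i}x=e_i$ is tangent and $\langle e_i,{\bf n}\rangle=0$, the only surviving term is $e_i(H)=-\tfrac12\langle x,\overline{\nabla}_{e_i}{\bf n}\rangle$. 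Using the Weingarten relation $\overline{\nabla}_{e_i}{\bf n}=-\sum_j h_{ij}e_j$, where $h_{ij}=\langle\overline{\nabla}_{e_i}e_j,{\bf n}\rangle$ are the components of the scalar second fundamental form, this becomes $e_i(H)=\tfrac12\sum_j h_{ij}\langle x^T,e_j\rangle$.

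Since $\nabla H$ is tangent to $\Sigma$, the normal part of $x$ drops out, and
\begin{equation*}
\langle x,\nabla H\rangle=\langle x^T,\nabla H\rangle=\sum_i\langle x^T,e_i\rangle\,e_i(H)=\frac12\sum_{i,j}h_{ij}\langle x^T,e_i\rangle\langle x^T,e_j\rangle=\frac12\,A(x^T,x^T),
\end{equation*}
as claimed. The only subtlety to watch is bookkeeping with the sign conventions fixed in Section \ref{notation} (namely ${\bf H}=-H{\bf n}$, ${\bf H}=\tfrac12 x^\perp$, hence $H=-\tfrac12\langle x,{\bf n}\rangle$), together with the convention that in the hypersurface case $A(x^T,x^T)$ is read as the scalar $\sum_{i,j}h_{ij}\langle x^T,e_i\rangle\langle x^T,e_j\rangle$. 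With those conventions in place the computation is tensorial and frame-independent, so both \eqref{spn7} and \eqref{spn8} hold at every point of $\Sigma$.
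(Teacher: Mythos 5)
Your proposal is correct and follows essentially the same route as the paper's own proof: both derive \eqref{spn7} by writing $\mathcal{L}_\alpha H=\mathcal{L}H-\frac{\alpha+1}{2}\langle x,\nabla H\rangle$ and invoking \eqref{eH-1}, then obtain \eqref{spn8} by differentiating $H=-\tfrac12\langle x,{\bf n}\rangle$ along an orthonormal frame and using the Weingarten relation to get $\langle x,\nabla H\rangle=\tfrac12 A(x^T,x^T)$. Your explicit remark on the sign conventions for $h_{ij}$ and the scalar reading of $A(x^T,x^T)$ matches what the paper does implicitly, so there is nothing to correct.
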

 \begin{proof}
 	Since $\mathscr{L}H=-\frac{1}{2}H-|A|^2H$,
 	\begin{align*}
 	\mathcal{L}_\alpha H=-\frac{1}{2}H-|A|^2H-\frac{\alpha+1}{2}\langle x,\nabla H\rangle.
 	\end{align*}
 	Take a local orthonormal frame $\{e_i\}$, $i=1,...,n$ for $\Sigma$. From $H=-\frac{1}{2}\langle x,{\bf n}\rangle$,
 	\begin{align*}
 	2\nabla_{e_i}H=&-\langle \nabla_{e_i}x, {\bf n}\rangle-\langle x,\nabla_{e_i}{\bf n}\rangle\\ 
 	=&h_{ij}\langle x,e_j\rangle
 	\end{align*}
 	and hence 
 	\begin{align*}
 	\langle x,\nabla H\rangle=\langle x,e_i\rangle\nabla_{e_i}H=\frac{1}{2}h_{ij}\langle x,e_i\rangle\langle x,e_j\rangle=\frac{1}{2}A(x^T,x^T).
 	\end{align*}
 	By this and Equation \eqref{spn7}, we have that 
 	\begin{align*}
 	\mathcal{L}_\alpha H&=-\frac{1}{2}H-|A|^2H-\frac{\alpha+1}{2}\langle x,\nabla H\rangle\\
 	&=-\frac{1}{2}H-|A|^2H-\frac{\alpha+1}{4}A(x^T,x^T).
 	\end{align*}
 \end{proof}

Now, we  prove the following result:
\begin{theorem}\label{se3}
	Let $\Sigma$ be a complete  immersed self-expander hypersurface in $\R^{n+1}$. Assume that  $\delta\in\{1,3,5...\}$ and $\alpha>0$. If $\Sigma$ satisfies 	the following properties:
	\begin{itemize}
		\item [(i)] $|A|^2H^2 +\frac{1}{2}H^2+\frac{\alpha+1}{4}A(x^T,x^T)H\leq0$,
		\item [(ii)] $\frac{1}{j^2}\int_{B^{\Sigma}_{2j}(p)\setminus B^{\Sigma}_j(p)}H^{\delta+1}e^{-\alpha\frac{|x|^2}{4}}d\sigma\rightarrow 0$ when $j\rightarrow \infty$, for a fixed point $p\in \Sigma$,
	\end{itemize}
	then $\Sigma$ must be a hyperplane $\R^{n}$ through the origin,  where $x^T$ denotes the
	tangent component of the position vector $x$.
\end{theorem}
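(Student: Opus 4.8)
The plan is to test the identity \eqref{spn8} against $H^{\delta}$ and integrate by parts in the weighted measure. The key observation is that $\mathcal{L}_\alpha=\Delta-\frac{\alpha}{2}\langle x,\nabla\cdot\rangle$ is precisely the drifted Laplacian $\Delta_f$ associated with $f=\alpha\frac{|x|^2}{4}$ on $\Sigma$ (indeed $\nabla f=\frac{\alpha}{2}x^T$, so $\Delta_f=\mathcal{L}_\alpha$), hence it is self-adjoint in $L^2(\Sigma,e^{-\alpha\frac{|x|^2}{4}}d\sigma)$. Multiplying \eqref{spn8} by $H$ and using hypothesis (i) shows that
\[
H\mathcal{L}_\alpha H=-\Bigl(|A|^2H^2+\tfrac12H^2+\tfrac{\alpha+1}{4}A(x^T,x^T)H\Bigr)\geq 0
\]
everywhere on $\Sigma$. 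Since $\delta$ is odd, $H^{\delta-1}=|H|^{\delta-1}\geq 0$ and $H^{\delta+1}=|H|^{\delta+1}\geq 0$; in particular $H^{\delta}\mathcal{L}_\alpha H=H^{\delta-1}(H\mathcal{L}_\alpha H)\geq 0$ pointwise. This nonnegativity is what the entire argument exploits.

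First I would fix $p\in\Sigma$ and choose the standard Lipschitz cutoffs $\eta_j$ with $\eta_j\equiv 1$ on $B^{\Sigma}_j(p)$, $\eta_j\equiv 0$ off $B^{\Sigma}_{2j}(p)$, $0\leq\eta_j\leq 1$, and $|\nabla\eta_j|\leq C/j$ supported in the annulus $B^{\Sigma}_{2j}(p)\setminus B^{\Sigma}_j(p)$. Writing $e^{-f}=e^{-\alpha\frac{|x|^2}{4}}$ and applying the weighted divergence theorem to the compactly supported field $\eta_j^2H^{\delta}\nabla H$ gives $\int_\Sigma\eta_j^2H^{\delta}\mathcal{L}_\alpha H\,e^{-f}d\sigma=-\int_\Sigma\langle\nabla(\eta_j^2H^{\delta}),\nabla H\rangle e^{-f}d\sigma$. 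Expanding the right-hand side and estimating the cross term by Young's inequality $2ab\leq\frac{\delta}{2}a^2+\frac{2}{\delta}b^2$, with $a=\eta_j|H|^{(\delta-1)/2}|\nabla H|$ and $b=|\nabla\eta_j|\,|H|^{(\delta+1)/2}$, yields
\[
\int_\Sigma\eta_j^2H^{\delta}\mathcal{L}_\alpha H\,e^{-f}d\sigma+\frac{\delta}{2}\int_\Sigma\eta_j^2H^{\delta-1}|\nabla H|^2e^{-f}d\sigma\leq\frac{2}{\delta}\int_\Sigma|\nabla\eta_j|^2H^{\delta+1}e^{-f}d\sigma.
\]
Both terms on the left are nonnegative, while the right-hand side is bounded by $\frac{C}{j^2}\int_{B^{\Sigma}_{2j}(p)\setminus B^{\Sigma}_j(p)}H^{\delta+1}e^{-\alpha\frac{|x|^2}{4}}d\sigma$, which tends to $0$ by hypothesis (ii). Testing against any fixed geodesic ball and letting $j\to\infty$ then forces $\int_\Sigma H^{\delta-1}|\nabla H|^2e^{-f}d\sigma=0$ and $\int_\Sigma H^{\delta}\mathcal{L}_\alpha H\,e^{-f}d\sigma=0$; as both integrands are nonnegative, they vanish identically.

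It remains to upgrade these pointwise vanishings to $H\equiv 0$. On the open set $U=\{H\neq 0\}$ we have $H^{\delta-1}>0$, so $|\nabla H|^2\equiv 0$ there; by the computation in the Lemma ($2\nabla_{e_i}H=A(x^T,e_i)$), this gives $A(x^T,\cdot)\equiv 0$ and in particular $A(x^T,x^T)\equiv 0$ on $U$. Moreover $\nabla H\equiv 0$ makes $H$ locally constant on $U$, hence $\mathcal{L}_\alpha H\equiv 0$ there, and \eqref{spn8} collapses to $0=-\bigl(\tfrac12+|A|^2\bigr)H$, forcing $H=0$ on $U$ — a contradiction unless $U=\emptyset$. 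Thus $H\equiv 0$ on $\Sigma$. Finally, the self-expander relation $H=-\frac12\langle x,{\bf n}\rangle$ turns $H\equiv 0$ into $\langle x,{\bf n}\rangle\equiv 0$, so the position vector is everywhere tangent and $\Sigma$ is a minimal cone; being a complete smooth cone, it must be the hyperplane $\R^n$ through the origin.

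The main obstacle is the careful bookkeeping of the parity of $\delta$: the argument hinges on $\delta$ being odd, which is exactly what makes $H^{\delta-1}$ and $H^{\delta+1}$ nonnegative and renders $H^{\delta}\mathcal{L}_\alpha H$ a nonnegative quantity, so that the cutoff estimate can be closed. The second delicate point is justifying the weighted integration by parts and the limit $j\to\infty$ solely from the localized growth hypothesis (ii), without presupposing finiteness of any global weighted integral; this is handled by the compact support of $\eta_j$ together with the fact that a nonnegative integrand whose integral over every fixed ball is zero must vanish identically.
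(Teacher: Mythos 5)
Your proposal is correct and follows essentially the same route as the paper's proof: multiply \eqref{spn8} by $H^{\delta}$ (using oddness of $\delta$ and hypothesis (i) to get a sign), integrate by parts against annulus-supported cutoffs with gradient $O(1/j)$, apply Young's inequality with the same $\frac{\delta}{2}$, $\frac{2}{\delta}$ split, invoke hypothesis (ii) to force $H^{\delta-1}|\nabla H|^2\equiv 0$, and then rule out the set $\{H\neq 0\}$ by noting that $H$ locally constant together with the elliptic equation for $H$ forces $H=0$. The only cosmetic difference is that in the last step you use \eqref{spn8} with $A(x^T,x^T)=0$ while the paper cites \eqref{eH-1}; these are equivalent.
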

\begin{proof}
	
	Let $\varphi\in C^{\infty}_0(\Sigma)$. From \eqref{spn8}, hypothesis (i) and the value of $\delta$, we have
	\begin{align*}
	0&\leq\int_{\Sigma}\left( -\frac{1}{2}H-|A|^2H-\frac{\alpha+1}{4}A(x^T,x^T) \right) H^{\delta}\varphi^2e^{-\alpha\frac{|x|^2}{4}}\\
	&=\int_{\Sigma}H^{\delta}\varphi^2(\mathcal{L}_{\alpha} H)e^{-\alpha\frac{|x|^2}{4}}.
	\end{align*}
	Further, 
	\begin{align*}
	\int_{\Sigma}H^{\delta}\varphi^2(\mathcal{L}_{\alpha} H)e^{-\alpha\frac{|x|^2}{4}}
	 =&-2\int_{\Sigma}H^{\delta}\varphi\langle \nabla\varphi,\nabla H\rangle e^{-\alpha\frac{|x|^2}{4}}\\
	&-\delta\int_{\Sigma}\varphi^2H^{\delta-1}|\nabla H|^2e^{-\alpha\frac{|x|^2}{4}}\\
	\leq&-\frac{\delta}{2}\int_{\Sigma}\varphi^2H^{\delta -1}|\nabla H|^2e^{-\alpha\frac{|x|^2}{4}}\\
	&+\frac{2}{\delta}\int_{\Sigma}|\nabla\varphi|^2H^{\delta+1}e^{-\alpha\frac{|x|^2}{4}},
	\end{align*}
	where $\delta>0$. Therefore 
	\begin{align*}
	\frac{\delta}{2}\int_{\Sigma}\varphi^2H^{\delta -1}|\nabla H|^2e^{-\alpha\frac{|x|^2}{4}}\leq\frac{2}{\delta}\int_{\Sigma}|\nabla\varphi|^2H^{\delta+1}e^{-\alpha\frac{|x|^2}{4}}.
	\end{align*}
	
	Choose  $\varphi=\varphi_j$, where  $\varphi_j$ are the nonnegative cut-off functions satisfying that  $\varphi_j = 1$ on $B_j^{\Sigma}(p)$, $\varphi_j = 0$ on $\Sigma\setminus B_{2j}^{\Sigma}(p)$ and $|\nabla \varphi_j|\leq \frac{1}{j}$. By the monotone convergence theorem and hypothesis $(ii)$, it follows that, on $\Sigma$,
	$$H^{\delta-1}|\nabla H|^2=0,$$
	We claim that $H=0$ on $\Sigma$. In fact, if $H(p)\neq 0$ for some $p\in\Sigma$, then there exists a neighborhood $B_{\varepsilon}(p)$ such that $H\neq0$ on $B_{\varepsilon}(p)$. So $\nabla H=0$ on $B_{\varepsilon}(p)$ and hence $H=C$ on $B_{\varepsilon}(p)$.  By \eqref{eH-1} we conclude that $H\equiv 0$ on $B_{\varepsilon}(p)$ which contradicts with $H(p)=0$. The claim implies that   $\Sigma$ is the    hyperplane  $\R^{n}$ through the origin.
\end{proof}
We give the integrable property of the powers of the norm of mean curvature vector ${\bf H}$ which will be used later.
\begin{lemma}\label{spn10}
	Let $\Sigma$ be  a complete $n$-dimensional  properly immersed self-expander in $\R^m$. Assume that $|{\bf H}|\leq a|x|+b$, $x\in \Sigma,$  for some constants $0\leq a<\frac{1}{2}$ and $b>0$. Then for $\delta\geq 0$ and  $\alpha >\frac{4a^2}{1-4a^2}$, 
	\begin{align}
	\int_{\Sigma}|{\bf H}|^{\delta}e^{-\frac{\alpha}{4}|x|^2}d\sigma<\infty.
	\end{align}  
\end{lemma}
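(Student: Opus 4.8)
The plan is to reduce the weighted integral of the polynomial power $|{\bf H}|^{\delta}$ to the pure Gaussian weighted volume, which is already controlled by Theorem \ref{ineq5}. Since $\delta\geq 0$ and the hypothesis gives $|{\bf H}|(x)\leq a|x|+b$ on $\Sigma$, I would first replace the integrand by the monotone bound $|{\bf H}|^{\delta}\leq (a|x|+b)^{\delta}$, so that it suffices to show
\[
\int_{\Sigma}(a|x|+b)^{\delta}e^{-\frac{\alpha}{4}|x|^2}\,d\sigma<\infty.
\]

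The key observation is that the assumption $\alpha>\frac{4a^2}{1-4a^2}$ is a \emph{strict} inequality, so one can choose an intermediate constant $\alpha'$ with $\frac{4a^2}{1-4a^2}<\alpha'<\alpha$. Then I would split the Gaussian factor as $e^{-\frac{\alpha}{4}|x|^2}=e^{-\frac{\alpha-\alpha'}{4}|x|^2}\,e^{-\frac{\alpha'}{4}|x|^2}$. Because $\alpha-\alpha'>0$, the one–variable function $t\mapsto (at+b)^{\delta}e^{-\frac{\alpha-\alpha'}{4}t^2}$ is continuous and tends to $0$ as $t\to\infty$, hence is bounded on $[0,\infty)$ by some constant $C_1=C_1(\alpha,\alpha',a,b,\delta)<\infty$. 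Evaluating at $t=|x|$ gives the pointwise bound $(a|x|+b)^{\delta}e^{-\frac{\alpha-\alpha'}{4}|x|^2}\leq C_1$ on $\Sigma$, and therefore
\[
\int_{\Sigma}(a|x|+b)^{\delta}e^{-\frac{\alpha}{4}|x|^2}\,d\sigma\leq C_1\int_{\Sigma}e^{-\frac{\alpha'}{4}|x|^2}\,d\sigma.
\]

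Finally, since $\alpha'>\frac{4a^2}{1-4a^2}$, Theorem \ref{ineq5}(i) applies with $\alpha'$ in place of $\alpha$ and yields $\int_{\Sigma}e^{-\frac{\alpha'}{4}|x|^2}\,d\sigma<\infty$, which closes the estimate. There is essentially no analytic obstacle here; the only point that requires care is the strictness of the threshold inequality $\alpha>\frac{4a^2}{1-4a^2}$, which is exactly what provides room to insert $\alpha'$ and thereby trade the polynomial growth of the weight against a small amount of Gaussian decay. I note that an alternative route through the volume growth estimate of Theorem \ref{ineq5}(ii) together with a dyadic decomposition into annuli is also possible, but it is less efficient: matching the inner-radius value of the Gaussian against the outer-radius volume forces a stronger comparison between the exponents, whereas the splitting above uses only part (i) of Theorem \ref{ineq5}.
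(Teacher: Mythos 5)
Your proof is correct and follows essentially the same route as the paper: the paper bounds $|{\bf H}|$ by a linear function of $|x|$ (via the expander identity $|{\bf H}|=\frac12|x^\perp|\leq\frac12|x|$) and then invokes Theorem \ref{ineq5}(i), leaving implicit exactly the step you spell out, namely choosing $\alpha'$ with $\frac{4a^2}{1-4a^2}<\alpha'<\alpha$ and absorbing the polynomial factor into the excess Gaussian decay $e^{-\frac{\alpha-\alpha'}{4}|x|^2}$. Your version, using the hypothesis $|{\bf H}|\leq a|x|+b$ instead of the expander identity, is an immaterial variation of the same argument.
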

\begin{proof}
	Note that $|{\bf H}|=|\frac{1}{2}x^{\perp}|\leq\frac{1}{2}|x|.$ Hence it is easily to see that Theorem \ref{ineq5} implies the desired conclusion.
\end{proof}
A consequence of Theorem \ref{se3} is Theorem  \ref{se33}.

\begin{proof}[Proof of Theorem  \ref{se33}] 
The  hypothesis $|A|^2H^2 +\frac{1}{2}H^2+\beta A(x^T,x^T)H\leq0,$ implies that  $A(x^T,x^T)H\leq0$. Then for any  $\alpha>4\beta-1$, 
	\begin{align}\label{se333}
	|A|^2H^2 +\frac{1}{2}H^2+\frac{\alpha+1}{4} A(x^T,x^T)H\leq0,
	\end{align}
	which is  just the condition (i) of Theorem \ref{se3}. \\
	Choose   $\alpha >\max\{\frac{a^2}{\frac{1}{4}-a^2}, 4\beta-1\}$. By  Lemma \ref{spn10}, the  condition (ii) of Theorem \ref{se3} is also satisfied.
	\end{proof}

In order to prove Theorem \ref{se334}, we prove the following result.

 \begin{theorem}\label{se5}
	Let $\Sigma$ be a complete  immersed  self-expander hypersurface in $\R^{n+1}$. Assume that its mean curvature $H$ is bounded from below.  If there exists $\alpha>0$ such that  the following  conditions hold:
	
	\begin{itemize}
		\item [(i)] $|A|^2H+\frac{H}{2} +\frac{\alpha+1}{4}A(x^T,x^T)\geq0$,
		\item [(ii)] $\frac{1}{j^2}\int_{B^{\Sigma}_{2j}(p)\setminus B^{\Sigma}_j(p)}e^{-\alpha\frac{|x|^2}{4}}d\sigma\rightarrow 0$,  when $j\rightarrow\infty$, for a fixed point $p\in\Sigma$,
	\end{itemize}
	then  $\Sigma$ must be a  hyperplane  $\R^{n}$ through the origin,  where $x^T$ denotes the
	tangent component of the position vector $x$.
\end{theorem}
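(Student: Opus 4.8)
The plan is to recast hypothesis (i) as a superharmonicity statement and then run a Caccioppoli-type estimate, in close parallel to the proof of Theorem \ref{se3}, but adapted to the fact that here the sign condition is imposed on $\mathcal{L}_\alpha H$ itself rather than on $H\,\mathcal{L}_\alpha H$. By the identity \eqref{spn8}, hypothesis (i) is exactly
\[
-\mathcal{L}_\alpha H=|A|^2H+\tfrac12 H+\tfrac{\alpha+1}{4}A(x^T,x^T)\ge 0,
\]
so $\mathcal{L}_\alpha H\le 0$ on $\Sigma$, where $\mathcal{L}_\alpha=\Delta-\frac{\alpha}{2}\langle x,\nabla\cdot\rangle$. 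Since for functions the intrinsic gradient satisfies $\nabla\bigl(\tfrac{\alpha}{4}|x|^2\bigr)=\tfrac{\alpha}{2}x^T$ and $\langle x,\nabla\cdot\rangle=\langle x^T,\nabla\cdot\rangle$, the operator $\mathcal{L}_\alpha$ is the drifted Laplacian associated with the weight $e^{-\frac{\alpha}{4}|x|^2}$, hence self-adjoint in $L^2(\Sigma,e^{-\frac{\alpha}{4}|x|^2}d\sigma)$. As $H$ is bounded from below, I would set $m:=\inf_\Sigma H>-\infty$ and $u:=H-m\ge 0$; because $\mathcal{L}_\alpha$ annihilates constants, $u$ is a nonnegative $\mathcal{L}_\alpha$-superharmonic function, $\mathcal{L}_\alpha u\le 0$.

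Next I would test this inequality against $u\varphi^2$ for a nonnegative $\varphi\in C_0^\infty(\Sigma)$. Since $u\varphi^2\ge 0$ and $\mathcal{L}_\alpha u\le0$, self-adjointness and integration by parts give
\[
0\ge\int_\Sigma u\varphi^2\,(\mathcal{L}_\alpha u)\,e^{-\frac{\alpha}{4}|x|^2}d\sigma=-\int_\Sigma\langle\nabla u,\nabla(u\varphi^2)\rangle\, e^{-\frac{\alpha}{4}|x|^2}d\sigma,
\]
and expanding $\nabla(u\varphi^2)=\varphi^2\nabla u+2u\varphi\nabla\varphi$ followed by Young's inequality yields the Caccioppoli estimate
\[
\frac12\int_\Sigma\varphi^2|\nabla H|^2\, e^{-\frac{\alpha}{4}|x|^2}d\sigma\le 2\int_\Sigma (H-m)^2|\nabla\varphi|^2\, e^{-\frac{\alpha}{4}|x|^2}d\sigma .
\]
I would then insert the standard cut-offs $\varphi_j$ with $\varphi_j\equiv1$ on $B_j^\Sigma(p)$, $\varphi_j\equiv0$ off $B_{2j}^\Sigma(p)$, and $|\nabla\varphi_j|\le 1/j$, so that the right-hand side is bounded by $\tfrac{2}{j^2}\int_{B_{2j}^\Sigma(p)\setminus B_j^\Sigma(p)}(H-m)^2 e^{-\frac{\alpha}{4}|x|^2}d\sigma$.

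The crucial step, and the one I expect to be the real obstacle, is to show that this annular quantity tends to zero as $j\to\infty$ by means of hypothesis (ii). Here the self-expander structure is essential: from $H=-\tfrac12\langle x,{\bf n}\rangle$ one has the a priori bound $|H|\le\tfrac12|x|$, and on the intrinsic annulus $B_{2j}^\Sigma(p)\setminus B_j^\Sigma(p)$ the extrinsic radius obeys $|x|\le|p|+2j$, so that $(H-m)^2\le 2H^2+2m^2\le C(1+j^2)$ there. Feeding this into the Caccioppoli bound reduces the convergence to hypothesis (ii). The delicate point is that the quadratic growth of $(H-m)^2$ must be absorbed exactly by the $j^{-2}$ weight appearing in (ii); matching these two growth rates is what makes the estimate work with only a control on $\mathcal{L}_\alpha H$ and an annular weighted-volume decay, rather than a pointwise bound on $H$. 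Granting this, the estimate forces $|\nabla H|\equiv 0$, so $H$ is constant on $\Sigma$.

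Finally I would upgrade ``$H$ constant'' to ``$\Sigma$ is a hyperplane through the origin''. Inserting the constant $H$ into \eqref{eH-1} gives $0=\mathcal{L}H=-(|A|^2+\tfrac12)H$, and since $|A|^2+\tfrac12\ge\tfrac12>0$ this forces $H\equiv0$. For a self-expander, $H\equiv0$ means ${\bf H}=0$, hence $x^\perp=2{\bf H}=0$, i.e. the position vector is everywhere tangent to $\Sigma$; thus $\Sigma$ is a smooth complete minimal cone, which can only be the hyperplane $\R^{n}$ through the origin. This is the same concluding reduction used at the end of the proof of Theorem \ref{se3}.
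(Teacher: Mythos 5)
Your reduction of hypothesis (i) to $\mathcal{L}_\alpha H\le 0$ via \eqref{spn8}, and your final step (constant $H$ forces $H\equiv 0$ by \eqref{eH-1}, hence a hyperplane through the origin) agree with the paper, but the core of your argument has a genuine gap, located exactly at the step you flag and then ``grant.'' Testing $\mathcal{L}_\alpha u\le 0$ with $u\varphi_j^2$, where $u=H-m$, your Caccioppoli estimate has right-hand side
\[
\frac{2}{j^2}\int_{B^{\Sigma}_{2j}(p)\setminus B^{\Sigma}_{j}(p)}(H-m)^2 e^{-\frac{\alpha}{4}|x|^2}d\sigma .
\]
Since on the intrinsic annulus $(H-m)^2$ may be as large as $C(1+j^2)$, the factor $j^{-2}$ is exactly cancelled, and what your argument actually requires is $\int_{B^{\Sigma}_{2j}(p)\setminus B^{\Sigma}_{j}(p)}e^{-\frac{\alpha}{4}|x|^2}d\sigma\rightarrow 0$, with no $j^{-2}$ weight. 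Hypothesis (ii) is strictly weaker: it permits this annular weighted volume to grow (say, like $j$), in which case your right-hand side need not tend to zero. So the convergence does not ``reduce to hypothesis (ii)''; the two growth rates cancel rather than combine, leaving nothing to produce decay.

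The paper closes this gap with a logarithmic substitution, which is the idea missing from your proposal. Set $C=\inf_{\Sigma}H$; since $\mathcal{L}_\alpha(H-C)\le 0$ and $H-C\ge 0$, the strong maximum principle gives either $H\equiv C$ or $H>C$ everywhere. In the second case one puts $u=\log(H-C)$, which satisfies
\[
\mathcal{L}_\alpha u\le -|\nabla u|^2 ,
\]
and then tests with $\varphi_j^2$ (not $u\varphi_j^2$); absorbing the cross term yields
\[
\int_\Sigma \varphi_j^2|\nabla u|^2 e^{-\frac{\alpha}{4}|x|^2}d\sigma\le 4\int_\Sigma|\nabla\varphi_j|^2 e^{-\frac{\alpha}{4}|x|^2}d\sigma\le \frac{4}{j^2}\int_{B^{\Sigma}_{2j}(p)\setminus B^{\Sigma}_{j}(p)}e^{-\frac{\alpha}{4}|x|^2}d\sigma ,
\]
whose right-hand side tends to zero exactly under (ii), because the logarithm builds the normalization $(H-C)^{-2}$ into $|\nabla u|^2$ and leaves a right-hand side with no dependence on $H$ at all. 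Then $\nabla u\equiv 0$ makes $H$ constant, contradicting $H>C$; in the remaining case $H\equiv C$ is constant and \eqref{eH-1} gives $H\equiv 0$, as in your conclusion. If you insist on the linear test function $u\varphi_j^2$, you would need to strengthen (ii) by deleting the $j^{-2}$ factor, or assume $H$ bounded, which changes the theorem.
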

\begin{proof} 
	Let us fix $C=\inf_{x\in\Sigma}H$. From hypothesis (i) and \eqref{spn8} it follows that
	\begin{align}\label{si1}
	\mathcal{L}_{\alpha}(H-C)\leq0.
	\end{align}
	By  the maximum principle, either $H\equiv C$ or $H> C.$ 
	If $H\equiv C$, then $\Sigma$ is the  hyperplane through the origin. 
	If $H>C$, let us consider $u:=\log(H-C)$.  A computing yields
	\begin{align}\label{e14}
	\Delta u=-|\nabla u|^2+\frac{\Delta H}{H-C}.
	\end{align}     
	Combining   \eqref{si1} and \eqref{e14}, we get
	\begin{align}\label{e16}
	\mathcal{L}_{\alpha} u\leq-|\nabla u|^2.
	\end{align}

	Let us consider the sequence $\varphi_j$ of nonnegative cut-off function satisfying that  $\varphi_j=1$ on $B^{\Sigma}_j(p)$, $\varphi_j=0$ on $\Sigma\setminus B^{\Sigma}_{2j}(p)$ and $|\nabla \varphi_j|\leq \frac{1}{j}.$\\
	Multiplying \eqref{e16} by $\varphi^2_j$ and integrating by parts we obtain 
	\begin{align*}
	\int_{\Sigma}\varphi^2_j|\nabla u|^2e^{-\alpha\frac{|x|^2}{4}}&\leq -\int_{\Sigma}\varphi^2_j(\mathcal{L}_{\alpha} u)e^{-\alpha\frac{|x|^2}{4}}\\
	&=\int_{\Sigma}2\varphi_j\langle \nabla \varphi_j,\nabla u\rangle e^{-\alpha\frac{|x|^2}{4}}\\
	&\leq\frac{1}{2}\int_{\Sigma}\varphi^2_j|\nabla u|^2e^{-\alpha\frac{|x|^2}{4}}+2\int_{\Sigma}|\nabla\varphi_j|^2e^{-\alpha\frac{|x|^2}{4}}.
	\end{align*} 
	Therefore
	\begin{align}\label{e17}
	\int_{\Sigma}\varphi^2_j|\nabla u|^2e^{-\alpha\frac{|x|^2}{4}}\leq 4\int_{\Sigma}|\nabla\varphi_j|^2e^{-\alpha\frac{|x|^2}{4}}.
	\end{align}
	By hypothesis (ii) and the dominated convergence theorem, we obtain 
	\begin{align*}
	\int_{\Sigma}|\nabla u|^2e^{-\alpha\frac{|x|^2}{4}}=0.
	\end{align*}
	In particular $H$ must be a constant, but this contradicts  the assumption that $H>\inf_{x\in\Sigma}H$.
\end{proof}

 As a consequence, Theorem \ref{se5} implies Theorem \ref{se334}. 
\begin{proof}[Proof of Theorem \ref{se334}]  From the assumption on the  mean curvature of $\Sigma$, it follows that $|H|(x)\leq a|x|+b_1$, for  some constants $0\leq a<\frac{1}{2}$ and $b_1>0$. Therefore,
	by Lemma \ref{spn10}, the condition (ii) of Theorem \ref{se5} is satisfied.
\end{proof}

 Theorem  \ref{se334} also has the following consequence:
\begin{cor}\label{se334-1}
	Let $\Sigma$ be a complete properly immersed  self-expander hypersurface in $\R^{n+1}$.  Assume that its mean curvature $H$ satisfies  $H(x)\leq a|x|+b$, $x\in \Sigma,$  for  some constants $0\leq a<\frac{1}{2}$ and $b>0$. If  $A(x^T,x^T)$ is bounded  from above and there exists  $\alpha>\frac{4a^2}{1-4a^2}$ such that
	\begin{align}\label{ineq-hyo}|A|^2H+\frac{H}{2} +\frac{\alpha+1}{4}A(x^T,x^T)\geq0,
	\end{align}
	then  $\Sigma$ must be a  hyperplane  $\R^{n}$ through the origin, where $x^T$ denotes the
	tangent component of the position vector $x$.
\end{cor}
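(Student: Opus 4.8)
The plan is to deduce Corollary~\ref{se334-1} from Theorem~\ref{se334}. The two statements share the same pointwise inequality \eqref{ineq-hyo}, the same growth control $H(x)\le a|x|+b$ and the same range $\alpha>\frac{4a^2}{1-4a^2}$; the only hypothesis of Theorem~\ref{se334} not assumed verbatim in the corollary is that $H$ be bounded from below. Thus the whole task reduces to the following claim: \emph{if $A(x^T,x^T)$ is bounded from above and \eqref{ineq-hyo} holds, then $H$ is bounded from below by a constant on $\Sigma$.}

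To prove the claim I would first rewrite \eqref{ineq-hyo} as $\left(|A|^2+\tfrac12\right)H\ge -\tfrac{\alpha+1}{4}A(x^T,x^T)$. Since $|A|^2+\tfrac12\ge\tfrac12>0$ everywhere, this yields the pointwise bound $H\ge-\tfrac{\alpha+1}{4}\,\dfrac{A(x^T,x^T)}{|A|^2+\frac12}$. Let $K\ge 0$ be a constant with $A(x^T,x^T)\le K$ on $\Sigma$. At points where $A(x^T,x^T)\le 0$ the right-hand side is nonnegative, so $H\ge 0$ there. At points where $0<A(x^T,x^T)\le K$, the denominator satisfies $|A|^2+\tfrac12\ge\tfrac12$, hence $\dfrac{A(x^T,x^T)}{|A|^2+\frac12}\le 2K$ and so $H\ge-\tfrac{\alpha+1}{2}K$. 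Combining the two cases gives $H\ge-\tfrac{\alpha+1}{2}K$ on all of $\Sigma$, which proves the claim.

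With $H$ now bounded from below, every hypothesis of Theorem~\ref{se334} is in force, and that theorem forces $\Sigma$ to be the hyperplane $\R^n$ through the origin, completing the argument. The only genuine point is the observation that the quotient $A(x^T,x^T)/(|A|^2+\tfrac12)$ is controlled from above: its denominator stays uniformly away from $0$, so an upper bound on the numerator $A(x^T,x^T)$ transfers to an upper bound on the quotient, and thereby to a lower bound on $H$. I expect this elementary reduction to be the main (and essentially only) obstacle, since all the analytic work---the integration-by-parts/Liouville argument against the weight $e^{-\alpha|x|^2/4}$, whose integrability is guaranteed through Lemma~\ref{spn10}---is already packaged inside Theorem~\ref{se334}.
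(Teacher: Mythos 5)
Your proposal is correct and follows essentially the same route as the paper: both reduce the corollary to Theorem \ref{se334} by showing that the hypotheses (inequality \eqref{ineq-hyo} plus the upper bound on $A(x^T,x^T)$, together with $|A|^2+\tfrac12\ge\tfrac12$) force $H$ to be bounded from below. The only difference is cosmetic: you extract an explicit pointwise bound $H\ge-\tfrac{\alpha+1}{2}K$ by dividing by $|A|^2+\tfrac12$, whereas the paper argues by contradiction along a sequence $H(p_k)\to-\infty$, dividing by $H(p_k)$ to get $|A|^2(p_k)<0$ for large $k$.
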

\begin{proof} We claim that $\inf_{x\in\Sigma}H>-\infty.$ In fact, if $\inf_{x\in\Sigma}H=-\infty$, then there exists a sequence $\{p_k\}$ in $\Sigma$ such that $H(p_k)\rightarrow -\infty$ when $k\rightarrow \infty$.
 By \eqref{ineq-hyo},  we have
	\begin{align}\label{me}
	|A|^2(p_k)\leq-\frac{1}{2}-\frac{(\alpha+1)A(p_k^T,p^T_k)}{4H(p_k)}.
	\end{align}
	 By the  hypothesis  that $A(x^T,x^T)$ is bounded from above,  \eqref{me} implies that $|A|^2(p_k)<0$ for $k$  large enough, that is a contradiction. Therefore  $\inf_{x\in\Sigma}H>-\infty.$\\
	
Now applying Theorem  \ref{se334}, we complete the proof.
\end{proof}

\bigskip

\section{Upper bound of $\lambda_1$}\label{results}

 In this section, we prove the upper bound estimate of the first eigenvalue $\lambda_1$ for the drifted Laplacian $\mathscr{L}.$

Let $\Sigma$ be a complete  $n$-dimensional immersed self-expander in $\R^m$ (not necessarily hypersurface). Then, the   functions 
$v=e^{-\frac{\alpha+1}{8}|x|^2}$, $\alpha\in \mathbb{R}$, satisfy
\begin{align}\label{espn1}
\mathscr{L}v+\left( \frac{(\alpha+1) n}{4}+\frac{\alpha+1}{2} |{\bf H}|^2-\frac{(\alpha+1) (\alpha-1)}{16}|x^T|^2\right)v=0\hspace{0.2cm}\mbox{on}\hspace{0.2cm}\Sigma. 
\end{align}

In fact,  it was proved in \cite[Lemma 3.1]{cheng2018} that for any smooth functions $u$, $f$ and $h$, it holds that
$$\Delta_f(ue^h)=e^h\{\Delta_{f-2h}u+[\Delta h+\langle \nabla(h-f), \nabla h\rangle]u\}.$$
Substituting  $u=1$, $f=-\frac{|x|^2}{4}$ and $h=-\frac{\alpha+1}{8}|x|^2$ into the above equality yields \eqref{espn1}.\\

  The following integrability properties on $v$ hold.

\begin{lemma}\label{spn9}
	Let $\Sigma$ be  a complete properly $n$-dimensional immersed self-expander  in $\mathbb{R}^{m}$.  Assume  that its mean curvature vector ${\bf H}$  satisfies $|{\bf H}|(x)\leq a|x|+b$, $x\in\Sigma$, for some constants $0\leq a<\frac{1}{2}$ and $b>0$. Then for   $\alpha>\frac{4a^2}{1-4a^2}$, $v=e^{-\frac{\alpha+1}{8}|x|^2}$ satisfies 
	\begin{align}
	\int_{\Sigma}v^2e^{\frac{|x|^2}{4}}d\sigma<\infty\hspace{0.5cm}\mbox{and}\hspace{0.5cm}\int_{\Sigma}|\nabla v|^2e^{\frac{|x|^2}{4}}d\sigma<\infty.
	\end{align}
\end{lemma}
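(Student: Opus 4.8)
The plan is to reduce both integrals to $\int_{\Sigma}e^{-\frac{\alpha}{4}|x|^2}\,d\sigma$, whose finiteness is exactly part (i) of Theorem \ref{ineq5} under the standing hypothesis $\alpha>\frac{4a^2}{1-4a^2}$. The whole lemma is then essentially a bookkeeping exercise plus one small absorption trick.

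First I would dispose of the weighted $L^2$ norm of $v$ by a direct algebraic simplification. Since $v=e^{-\frac{\alpha+1}{8}|x|^2}$, one has $v^2 e^{\frac{|x|^2}{4}}=e^{-\frac{\alpha+1}{4}|x|^2+\frac14|x|^2}=e^{-\frac{\alpha}{4}|x|^2}$, so that $\int_{\Sigma}v^2 e^{\frac{|x|^2}{4}}\,d\sigma=\int_{\Sigma}e^{-\frac{\alpha}{4}|x|^2}\,d\sigma<\infty$ by Theorem \ref{ineq5}(i).

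For the Dirichlet term I would first compute the intrinsic gradient. Using $\nabla|x|^2=2x^T$ on $\Sigma$, the chain rule gives $\nabla v=-\frac{\alpha+1}{4}\,v\,x^T$, hence $|\nabla v|^2=\left(\frac{\alpha+1}{4}\right)^2 v^2|x^T|^2$. Combining this with the simplification above and the trivial bound $|x^T|^2\le|x|^2$ yields
\[
|\nabla v|^2 e^{\frac{|x|^2}{4}}\le\left(\frac{\alpha+1}{4}\right)^2|x|^2\,e^{-\frac{\alpha}{4}|x|^2},
\]
so it remains to verify $\int_{\Sigma}|x|^2 e^{-\frac{\alpha}{4}|x|^2}\,d\sigma<\infty$.

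The only genuine point — and the step I expect to require a small trick rather than a routine estimate — is absorbing the polynomial factor $|x|^2$, since Theorem \ref{ineq5}(i) controls only a pure Gaussian. Here I would exploit that $\alpha>\frac{4a^2}{1-4a^2}$ is a \emph{strict} inequality: fix an auxiliary exponent $\alpha'$ with $\frac{4a^2}{1-4a^2}<\alpha'<\alpha$. The one-variable function $t\mapsto t\,e^{-\frac{\alpha-\alpha'}{4}t}$ is bounded on $[0,\infty)$, so there is a constant $C_1$ with $|x|^2 e^{-\frac{\alpha-\alpha'}{4}|x|^2}\le C_1$, whence $|x|^2 e^{-\frac{\alpha}{4}|x|^2}\le C_1\,e^{-\frac{\alpha'}{4}|x|^2}$. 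Applying Theorem \ref{ineq5}(i) once more, this time with the admissible exponent $\alpha'$, gives $\int_{\Sigma}|x|^2 e^{-\frac{\alpha}{4}|x|^2}\,d\sigma\le C_1\int_{\Sigma}e^{-\frac{\alpha'}{4}|x|^2}\,d\sigma<\infty$, which completes the argument.
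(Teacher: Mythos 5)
Your proposal is correct and follows essentially the same route as the paper: the paper's proof consists precisely of the two identities $v^2e^{\frac{|x|^2}{4}}=e^{-\frac{\alpha}{4}|x|^2}$ and $|\nabla v|^2e^{\frac{|x|^2}{4}}=\frac{(\alpha+1)^2}{16}|x^T|^2e^{-\frac{\alpha}{4}|x|^2}$, followed by an appeal to Theorem \ref{ineq5}. The only difference is that you make explicit the step the paper dismisses as easy --- absorbing the factor $|x^T|^2\le|x|^2$ by passing to an intermediate admissible exponent $\alpha'\in\bigl(\frac{4a^2}{1-4a^2},\alpha\bigr)$ --- which is exactly the intended completion of the argument.
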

\begin{proof}
	Note that 
	\begin{align*}
	v^2e^{\frac{|x|^2}{4}}= e^{-\frac{\alpha}{4}|x|^2}
	\end{align*}
	and 
	\begin{align*}
	|\nabla v|^2e^{\frac{|x|^2}{4}}=\frac{(\alpha+1)^2}{16}|x^T|^2 e^{-\frac{\alpha}{4}|x|^2}.
	\end{align*}
	It is easily to see that Theorem \eqref{ineq5} implies the desired conclusion.
\end{proof}
Now we give an inequality on the first eigenvalue $\lambda_1$.

 \begin{theorem} \label{thm-3-hyper-1} Let $\Sigma$ be  a complete properly immersed self-expander hypersurface in $\mathbb{R}^{n+1}$. Assume that its mean curvature $H$   satisfies $|H|(x)\leq a|x|+b$,  $x\in\Sigma$, for some constants $0\leq a<\frac{1}{2}$ and $b>0$. Then the bottom $\lambda_1$ of the spectrum of the drifted Laplacian $\mathscr{L}=\Delta+\frac{1}{2}\left<x, \nabla\cdot\right>$ on $\Sigma$, i.e. the first weighted $L^2$ eigenvalue of $\mathscr{L}$ satisfies
 	
 	\begin{align}\label{3-eq-thm2-1-c}
 	\lambda_1
 	&\leq \frac{(\alpha+1) n}{4}+\frac{(\alpha+1)\int_{\Sigma}\left( \frac12H^2-\frac{(\alpha-1)}{16}|x^T|^2\right) e^{-\frac{\alpha}{4}|x|^2}d\sigma}{\int_{\Sigma}e^{-\frac{\alpha}{4}|x|^2}d\sigma},
 	\end{align}
 	for all $\alpha>\frac{a^2}{\frac{1}{4}-a^2}$.\\
 	\end{theorem}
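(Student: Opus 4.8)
The plan is to feed the explicit function $v=e^{-\frac{\alpha+1}{8}|x|^2}$ into the variational characterization \eqref{bottom-1} of $\lambda_1$. The motivation is that $v$ already solves the Euler--Lagrange equation \eqref{espn1}: writing $q:=\frac{(\alpha+1)n}{4}+\frac{\alpha+1}{2}H^2-\frac{(\alpha+1)(\alpha-1)}{16}|x^T|^2$ (in codimension one $|{\bf H}|^2=H^2$), equation \eqref{espn1} reads $\mathscr{L}v+qv=0$, so formally $v$ is an eigenfunction of $\mathscr{L}$ whose ``eigenvalue'' is the weighted average of $q$. Since $v$ is not compactly supported, the whole task is to promote this formal statement to a genuine upper bound for $\lambda_1$, and the integrability supplied by Lemma \ref{spn9} is exactly what makes this possible.

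First I would record the two pointwise identities $v^2 e^{\frac{|x|^2}{4}}=e^{-\frac{\alpha}{4}|x|^2}$ and $|\nabla v|^2 e^{\frac{|x|^2}{4}}=\frac{(\alpha+1)^2}{16}|x^T|^2 e^{-\frac{\alpha}{4}|x|^2}$, so that by Lemma \ref{spn9} (which is applicable precisely because $\alpha>\frac{a^2}{1/4-a^2}=\frac{4a^2}{1-4a^2}$) both $v$ and $\nabla v$ lie in $L^2(\Sigma,e^{\frac{|x|^2}{4}}d\sigma)$. I would also note that the whole potential is integrable against the weight, i.e. $\int_{\Sigma}|q|\,e^{-\frac{\alpha}{4}|x|^2}d\sigma<\infty$, since $\int_{\Sigma}e^{-\frac{\alpha}{4}|x|^2}d\sigma$ is finite by Theorem \ref{ineq5}, $\int_{\Sigma}H^2 e^{-\frac{\alpha}{4}|x|^2}d\sigma$ is finite by Lemma \ref{spn10} with $\delta=2$, and $\int_{\Sigma}|x^T|^2 e^{-\frac{\alpha}{4}|x|^2}d\sigma$ is finite by the gradient estimate in Lemma \ref{spn9}.

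Next I would introduce cutoff functions $\varphi_j$ with $\varphi_j=1$ on $B_j^{\Sigma}(p)$, $\varphi_j=0$ off $B_{2j}^{\Sigma}(p)$ and $|\nabla\varphi_j|\le C/j$, and use the compactly supported Lipschitz functions $v\varphi_j$ as admissible test functions in \eqref{bottom-1} (the infimum over $C_0^{\infty}$ equals the infimum over compactly supported Lipschitz functions by a standard mollification argument). Expanding $|\nabla(v\varphi_j)|^2=\varphi_j^2|\nabla v|^2+2\varphi_j v\langle\nabla v,\nabla\varphi_j\rangle+v^2|\nabla\varphi_j|^2$ and letting $j\to\infty$, the first term tends to $\int_{\Sigma}|\nabla v|^2 e^{\frac{|x|^2}{4}}d\sigma$ by monotone convergence, the last term is bounded by $\frac{C^2}{j^2}\int_{B_{2j}\setminus B_j}e^{-\frac{\alpha}{4}|x|^2}d\sigma\to 0$ because $e^{-\frac{\alpha}{4}|x|^2}\in L^1(\Sigma)$, and the cross term vanishes by Cauchy--Schwarz against these two; the denominators converge by dominated convergence. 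This yields $\lambda_1\le \frac{\int_{\Sigma}|\nabla v|^2 e^{\frac{|x|^2}{4}}d\sigma}{\int_{\Sigma}v^2 e^{\frac{|x|^2}{4}}d\sigma}$.

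Finally I would evaluate this Rayleigh quotient by multiplying $\mathscr{L}v=-qv$ by $v\varphi_j^2 e^{\frac{|x|^2}{4}}$ and integrating by parts (legitimate since the integrand is compactly supported), which gives $\int_{\Sigma}\varphi_j^2|\nabla v|^2 e^{\frac{|x|^2}{4}}d\sigma+2\int_{\Sigma}\varphi_j v\langle\nabla v,\nabla\varphi_j\rangle e^{\frac{|x|^2}{4}}d\sigma=\int_{\Sigma}q\,v^2\varphi_j^2 e^{\frac{|x|^2}{4}}d\sigma$; passing to the limit exactly as above produces $\int_{\Sigma}|\nabla v|^2 e^{\frac{|x|^2}{4}}d\sigma=\int_{\Sigma}q\,e^{-\frac{\alpha}{4}|x|^2}d\sigma$. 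Dividing by $\int_{\Sigma}v^2 e^{\frac{|x|^2}{4}}d\sigma=\int_{\Sigma}e^{-\frac{\alpha}{4}|x|^2}d\sigma$ and inserting the explicit $q$ gives exactly the right-hand side of \eqref{3-eq-thm2-1-c}. The main obstacle throughout is controlling the boundary/tail contributions for the non-compactly-supported $v$: every limit hinges on the finiteness of $\int_{\Sigma}e^{-\frac{\alpha}{4}|x|^2}d\sigma$ and of the weighted integrals of $H^2$ and $|x^T|^2$, which is precisely why the hypotheses $0\le a<\frac12$ and $\alpha>\frac{a^2}{1/4-a^2}$ are needed.
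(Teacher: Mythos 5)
Your proposal is correct and takes essentially the same route as the paper's proof: both insert $v=e^{-\frac{\alpha+1}{8}|x|^2}$, cut off by functions $\varphi_j$, into the variational characterization of $\lambda_1$, use Lemma \ref{spn9} to pass to the limit and to justify the integration by parts $\int_{\Sigma}|\nabla v|^2 e^{\frac{|x|^2}{4}}d\sigma=\int_{\Sigma}v(-\mathscr{L}v)e^{\frac{|x|^2}{4}}d\sigma$, and then evaluate the right-hand side via \eqref{espn1}. The only differences are cosmetic: you use intrinsic balls with gradient bound $C/j$ where the paper uses extrinsic balls, and you spell out the weighted integrability of the potential $q$ (via Theorem \ref{ineq5} and Lemmas \ref{spn9}, \ref{spn10}) that the paper leaves implicit.
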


\begin{proof}
	By Theorem 1.1 in \cite{cheng2018}, the spectrum of the operator $\mathscr{L}$  is discrete and $\lambda_1$ is the first weighted $L^2$ eigenvalue of $\mathscr{L}$. Then for any   $\phi\in C^{\infty}_0(\Sigma)$
	
	\begin{align}\label{eq-bottom-2-1}
	\lambda_1\int_{\Sigma}\phi^2e^{\frac{|x|^2}{4}}
	&\leq\int_{\Sigma}|\nabla \phi|^2e^{\frac{|x|^2}{4}}.
	\end{align}
	Take $v=e^{-\frac{\alpha+1}{8}|x|^2}$, where $\alpha$ is any  constant satisfying $\alpha>\frac{a^2}{\frac{1}{4}-a^2}$.

	Choose $\phi=\varphi_j v$, where $\varphi_j$ are the nonnegative cut-off functions   satisfying  that  $\varphi_j$ is $1$ on $B_j(0)$,  $|\nabla\varphi_j|\leq 1$ on $B_{j+1}(0)\setminus B_j(0)$, and $\varphi_j=0$ on $\Sigma\setminus B_{j+1}(0)$.  Substitute $\phi$ in (\ref{eq-bottom-2-1}):
	\begin{align}\label{eq-bottom-3-a}
	\lambda_1\int_{\Sigma}\varphi_j^2v^2e^{\frac{|x|^2}{4}}
	&\leq\int_{\Sigma}|\nabla (\varphi_jv)|^2e^{\frac{|x|^2}{4}}.
	\end{align}
	Note
	\begin{align}\label{eq-bottom-4-a}
	\int_{\Sigma}|\nabla (\varphi_jv)|^2e^{\frac{|x|^2}{4}}&\leq 2\int_{\Sigma}\varphi_j^2|\nabla v|^2e^{\frac{|x|^2}{4}}+2\int_{\Sigma}|\nabla\varphi_j|^2v^2e^{\frac{|x|^2}{4}},
	\end{align}
	and $v\in W^{1,2}(\Sigma,e^{\frac{|x|^2}{4}}d\sigma)$.  Letting $j\rightarrow \infty$ in  (\ref{eq-bottom-4-a}) and using the monotone convergence theorem, 
	\begin{align*}
	\int_{\Sigma}|\nabla (\varphi_jv)|^2e^{\frac{|x|^2}{4}}&\rightarrow \int_{\Sigma}|\nabla v|^2e^{\frac{|x|^2}{4}}.
	\end{align*}
	 Besides, since
	\begin{align}\label{nabla-v}
\int_{\Sigma}\varphi_jv(-\mathcal{L}v)e^{\frac{|x|^2}{4}}
 &=\int_{\Sigma}\langle \nabla v, \nabla(\varphi_jv)\rangle e^{\frac{|x|^2}{4}} \\
&\leq\frac12\int_{\Sigma}|\nabla v|^2 e^{\frac{|x|^2}{4}}+\frac12\int_{\Sigma}  |\nabla (\varphi_jv|^2  e^{\frac{|x|^2}{4}},\nonumber
\end{align}
letting $j\rightarrow \infty$ in  \eqref{nabla-v}, Lemma \ref{spn9} and the monotone convergence theorem yield
\begin{align*}
\int_{\Sigma}v(-\mathcal{L}v)e^{\frac{|x|^2}{4}}
 &=\int_{\Sigma}|\nabla v| ^2e^{\frac{|x|^2}{4}}.
 \end{align*}

	Then let $j\rightarrow \infty$ in  (\ref{eq-bottom-3-a}) and use the monotone convergence theorem again. We have
	\begin{align}\nonumber
	\lambda_1\int_{\Sigma}v^2e^{\frac{|x|^2}{4}}
	&\leq \int_{\Sigma}|\nabla v|^2e^{\frac{|x|^2}{4}}\\ \nonumber
	&=\int_{\Sigma}v(-\mathcal{L}v)e^{\frac{|x|^2}{4}}\\
	&=\int_{\Sigma}\left( \frac{(\alpha+1) n}{4}+\frac{\alpha+1}{2} H^2-\frac{(\alpha+1)(\alpha-1)}{16}|x^T|^2\right) v^2e^{\frac{|x|^2}{4}}.\label{eq-bottom-6-a}
	\end{align}
	Therefore
	\begin{align}\label{3-eq-thm2-1-c-1}
 	\lambda_1
 	&\leq \frac{(\alpha+1) n}{4}+\frac{(\alpha+1)\int_{\Sigma}\left( \frac12H^2-\frac{(\alpha-1)}{16}|x^T|^2\right) e^{-\frac{\alpha}{4}|x|^2}}{\int_{\Sigma}e^{-\frac{\alpha}{4}|x|^2}}.
 	\end{align}
 	
 	\end{proof}

	Now we prove Theorem  \ref{thm-3-hyper}, which is a corollary of Theorem  \ref{thm-3-hyper-1}.

	\begin{proof}[Proof of Theorem \ref{thm-3-hyper}]
	 Noting the assumption  $a<\frac{1}{2\sqrt{2}}$, we can  take $\alpha=1$  in Theorem \ref{thm-3-hyper-1}. Then 
	
	\begin{align}\label{spn}
	\lambda_1
	&\leq \frac{n}{2}+\frac{\int_{\Sigma}H^2e^{-\frac{|x|^2}{4}}}{\int_{\Sigma}e^{-\frac{|x|^2}{4}}}.
	\end{align}
	
	If the equality  in \eqref{spn} holds, \eqref{eq-bottom-6-a}  with $\alpha=1$ becomes the equality and hence  $v=e^{-\frac{1}{4}|x|^2}\in W^{1,2}(\Sigma,e^{\frac{|x|^2}{4}}d\sigma)$ is the first eigenfunction of the operator $\mathscr{L}$ satisfying
	\begin{align}\label{eq-v}
	\mathscr{L}v+\lambda_1v=0. 
	\end{align}
	Taking $\alpha=1$ in \eqref {espn1}, we have 
	\begin{align}\label{espn1-1}
\mathscr{L}v+\left( \frac{n}{2}+ H^2\right)v=0\hspace{0.2cm}\mbox{on}\hspace{0.2cm}\Sigma. 
\end{align}
	 \eqref{eq-v} and \eqref{espn1-1} imply $H=constant$.\\
	By
	\begin{align*}
	\mathscr{L}H+(|A|^2+\frac{1}{2})H=0,
	\end{align*}
	we conclude that $H\equiv 0$. Thus $\Sigma$ is a hyperplane passing through the origin.
\end{proof}

\section{Self-expander hypersurfaces with constant scalar curvature}\label{scalar}

 In this section we prove  Theorem \ref{scc6},   which characterizes the complete  self-expander surfaces  immersed in $\mathbb{R}^3$ with constant scalar curvature. We also prove Theorem \ref{sfc} which characterizes the complete  self-expander surfaces properly immersed  in $\mathbb{R}^3$ with the second fundamental form constant in norm and nonpositive scalar curvature. \\
 	
 	 In order to  prove Theorem \ref{scc6} we need the following result on the complete self-expander hypersurfaces immersed in $\mathbb{R}^{n+1}$ with nonnegative scalar curvature.

 	\begin{prop}\label{pmf}
 		Let $\Sigma$ be a complete  immersed self-expander hypersurface  in $\mathbb{R}^{n+1}$. Assume that $\Sigma$ is different from a  hyperplane and   has nonnegative scalar curvature. Then $\Sigma=\Gamma\times\mathbb{R}^{n-1}$, where $\Gamma$ is a complete non-trivial self-expander  curve  immersed in $\mathbb{R}^2$, if and only if the scalar curvature attains a local minimum  on the open set $\{x\in\Sigma; H(x)\neq0\}$.
 	\end{prop}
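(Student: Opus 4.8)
The plan is to prove the two implications separately, the engine in both being the evolution equation \eqref{esc} for the scalar curvature together with the strong maximum principle and Smoczyk's characterization of cylindrical self-expanders (Theorem 5.1 in \cite{smoczyk2020self}).

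For the forward implication, suppose $\Sigma=\Gamma\times\mathbb{R}^{n-1}$ with $\Gamma\subset\mathbb{R}^2$ a non-trivial complete self-expander curve. Then the second fundamental form has exactly one nonzero principal curvature, namely the curvature of $\Gamma$, so that $|A|^2=H^2$ and hence $Scal_\Sigma=H^2-|A|^2\equiv 0$. Since $\Sigma$ has nonnegative scalar curvature by hypothesis, $0$ is the global minimum value of $Scal_\Sigma$, and it is attained at every point; in particular it is attained on the open set $\{H\neq 0\}$, which is nonempty because $\Gamma$, being non-trivial, is not a straight line and so has curvature not identically zero.

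For the converse I would first record two facts: a self-expander solves an elliptic equation with analytic coefficients and is therefore real-analytic, and $\Sigma$ not being a hyperplane forces $H\not\equiv 0$, so that $U:=\{H\neq 0\}$ is a nonempty open set. Let $p\in U$ be a point at which $Scal_\Sigma$ attains a local minimum. Then $\nabla Scal_\Sigma(p)=0$ and $\Delta Scal_\Sigma(p)\geq 0$, so $\mathscr{L}(Scal_\Sigma)(p)=\Delta Scal_\Sigma(p)\geq 0$; inserting this into \eqref{esc} gives $0\leq -Scal_\Sigma(p)(2|A|^2(p)+1)+2\bigl(|\nabla H|^2(p)-|\nabla A|^2(p)\bigr)$. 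The target is to upgrade this to $Scal_\Sigma\equiv 0$ on $\Sigma$, equivalently $|A|^2\equiv H^2$, i.e. that $A$ has rank one everywhere. Once this is in hand, Smoczyk's Theorem 5.1 identifies $\Sigma$ as a product $\Gamma\times\mathbb{R}^{n-1}$, and since $\Sigma$ is not a hyperplane the factor $\Gamma$ is a non-trivial complete self-expander curve in $\mathbb{R}^2$.

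The main obstacle is the gradient term $|\nabla H|^2-|\nabla A|^2$ in \eqref{esc}, which has no definite sign. Indeed $\nabla A$ is a totally symmetric $3$-tensor by Codazzi, and splitting it into its pure-trace part and its trace-free part $T_0$ yields the sharp bound $|\nabla A|^2\geq\frac{3}{n+2}|\nabla H|^2$, so that $|\nabla H|^2-|\nabla A|^2=\frac{n-1}{n+2}|\nabla H|^2-|T_0|^2$ may well be positive. Thus $Scal_\Sigma$ need not be $\mathscr{L}$-superharmonic and the strong maximum principle cannot be applied to it naively. The delicate step, and where the real work lies, is to rule out a strictly positive interior minimum of $Scal_\Sigma$ on $U$: here I would combine the inequality above with the first-order condition $\nabla Scal_\Sigma(p)=0$ (which after contraction reads $(Hg-A)\!\cdot\!\nabla A=0$) and with the equation \eqref{eH-1}, $\mathscr{L}H=-(|A|^2+\tfrac12)H$, governing $H$ on $U$, aiming to force $Scal_\Sigma(p)=0$ and the vanishing of $T_0$ at $p$. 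Real-analyticity should then propagate $Scal_\Sigma\equiv 0$ from a neighborhood of $p$ to all of $\Sigma$, yielding the rank-one condition required to invoke Smoczyk's theorem and finish the proof.
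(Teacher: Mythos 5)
Your forward implication is correct, and your setup of the converse (the inequality at the local minimum $p$ coming from \eqref{esc} and the second-derivative test) is also correct; but the converse is not actually proved. The entire crux --- ruling out a strictly positive local minimum of $Scal_\Sigma$ on $\{x\in\Sigma;\,H(x)\neq0\}$ --- is left as an intention (``I would combine \dots aiming to force $Scal_\Sigma(p)=0$''), and your concluding step rests on a misreading: real-analyticity cannot propagate $Scal_\Sigma\equiv0$ from the single point $p$, and Theorem 5.1 in \cite{smoczyk2020self} does not require $A$ to have rank one everywhere, only that $|A|^2/H^2$ attain a \emph{local maximum} on $\{H\neq0\}$. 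What you are missing is that the first-order condition at $p$ already closes the gap. Since $Scal_\Sigma=H^2-|A|^2$, the condition $\nabla Scal_\Sigma(p)=0$ reads $2H\nabla H=\nabla|A|^2=2|A|\nabla|A|$ at $p$; note $|A|(p)\neq0$ because $H^2\leq n|A|^2$ and $H(p)\neq0$. Hence
\[
|\nabla H|^2(p)=\frac{|A|^2}{H^2}(p)\,\bigl|\nabla|A|\bigr|^2(p)\leq\bigl|\nabla|A|\bigr|^2(p)\leq|\nabla A|^2(p),
\]
where the first inequality uses $|A|^2\leq H^2$ (i.e. $Scal_\Sigma\geq0$) and the second is Kato's inequality. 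Substituting this into your inequality $0\leq-Scal_\Sigma(p)\bigl(2|A|^2(p)+1\bigr)+2\bigl(|\nabla H|^2-|\nabla A|^2\bigr)(p)$ forces $Scal_\Sigma(p)=0$; then $Scal_\Sigma\geq0$ gives $|A|^2/H^2\leq1$ on $\{H\neq0\}$ with equality at $p$, so $|A|^2/H^2$ attains a local maximum at $p$ and Smoczyk's theorem yields $\Sigma=\Gamma\times\mathbb{R}^{n-1}$, with $\Gamma$ nontrivial because $\Sigma$ is not a hyperplane.

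For comparison, the paper resolves the same difficulty by a global rather than pointwise device: on $\{H\neq0\}$ it uses $2H\nabla H=\nabla Scal_\Sigma+\nabla|A|^2$ to write $4H^2|\nabla H|^2=\langle\nabla Scal_\Sigma,\nabla Scal_\Sigma+2\nabla|A|^2\rangle+4|A|^2\bigl|\nabla|A|\bigr|^2$, and absorbs the $\nabla Scal_\Sigma$-terms into the drift of a modified elliptic operator, obtaining
\[
\Delta Scal_{\Sigma}+\Bigl\langle\nabla Scal_{\Sigma},\,\frac{x}{2}-\frac{\nabla Scal_\Sigma+2\nabla|A|^2}{2H^2}\Bigr\rangle\leq-Scal_{\Sigma}(2|A|^2+1)+2\bigl|\nabla|A|\bigr|^2-2|\nabla A|^2\leq0.
\]
The strong maximum principle then makes $Scal_\Sigma$ constant on an open set $U\subset\{H\neq0\}$, the displayed inequality forces $Scal_\Sigma=0$ on $U$, and Smoczyk's theorem is invoked exactly as above. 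Either route works; your text, however, supplies neither the pointwise computation nor this rewriting, so as it stands the proof of the converse direction is incomplete.
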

 	\begin{proof}
 		Since $Scal_{\Sigma}=H^2-|A|^2$ it follows that 
 		\begin{align}
 			\nabla Scal_{\Sigma}=2H \nabla H-\nabla|A|^2.
 		\end{align}
 		Therefore
 		\begin{align}
 			 4H^2|\nabla H|^2=\left\langle\nabla Scal_{\Sigma},\nabla Scal_{\Sigma}+2\nabla |A|^2 \right\rangle +4|A|^2|\nabla |A||^2.
 		\end{align}
 		This together with \eqref{esc} imply that on the set  $\{x\in\Sigma; H(x)\neq0\}$  the following holds 
 		
 		\begin{align}\nonumber
 			&\Delta Scal_{\Sigma}+\left\langle\nabla Scal_{\Sigma}, \frac{x}{2}-\frac{\nabla Scal_\Sigma+2\nabla |A|^2}{2H^2} \right\rangle\\\label{smp}
 			&=-Scal_{\Sigma}(2|A|^2+1)+2\frac{|A|^2}{H^2}|\nabla |A||^2-2|\nabla A|^2\\\nonumber
 			&\leq -Scal_{\Sigma}(2|A|^2+1)+2|\nabla |A||^2-2|\nabla A|^2\\\nonumber
 			&\leq 0.
 		\end{align} 
 	In the above we also use the  hypothesis $Scal_{\Sigma}\geq 0$ and the inequality $|\nabla A|^2-|\nabla |A||^2\geq0$.
 	
 		If $Scal_{\Sigma}$ attains a local minimum  on the set  $\{x\in\Sigma; H(x)\neq0\}$, the  maximum principle implies that there exists  an open set $U\subset\{x\in\Sigma; H(x)\neq0\}$ such that  $Scal_{\Sigma}$ is constant on $U$.  Further \eqref{smp} implies that $Scal_\Sigma=0$ on $U$. This implies that $\frac{|A|^2}{H^2}$ attains a local maximum on the open set $\{x\in\Sigma; H(x)\neq0\}$.  Smoczyk (\cite[Theorem 5.1]{smoczyk2020self}) proved that for a self-expander hypersurface immersed in $\mathbb{R}^{n+1}$ different from a linear subspace, it is of the
 		form $\Gamma\times\mathbb{R}^{n-1}$, where $\Gamma$ is a nontrivial self-expander curve in $\mathbb{R}^2$, if and only if, the function $\frac{|A|^2}{H^2}$ attains a local maximum on the open set $\{x\in\Sigma; H(x)\neq0\}$. Hence we conclude that $\Sigma=\Gamma\times\mathbb{R}^{n-1}$ where  $\Gamma$ is a nontrivial self-expander
 		curve in $\mathbb{R}^2$. This  completes the proof.
 	\end{proof}
 
 Proposition \ref{pmf} have the following consequence.
	\begin{theorem}\label{scc}
			Let $\Sigma$ be a complete immersed self-expander hypersurface in $\mathbb{R}^{n+1}$ with  nonnegative constant scalar curvature. Then $\Sigma=\Gamma\times\mathbb{R}^{n-1}$ with the product metric, where $\Gamma$ is a complete self-expander curve immersed in $\mathbb{R}^2$.
	\end{theorem}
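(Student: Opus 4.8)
The plan is to deduce Theorem \ref{scc} directly from Proposition \ref{pmf}, splitting into two cases according to whether the mean curvature vanishes identically; the only real work is to dispose of the degenerate case and to observe that the local-minimum hypothesis of that proposition is automatic under the constancy assumption.

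First I would treat the case $H\equiv 0$. Here the scalar curvature is $Scal_\Sigma = H^2 - |A|^2 = -|A|^2 \le 0$, and combined with the hypothesis $Scal_\Sigma \ge 0$ this forces $|A|^2\equiv 0$, so $\Sigma$ is totally geodesic. A totally geodesic self-expander is a hyperplane, and the self-expander equation $H=-\tfrac12\langle x,\mathbf{n}\rangle=0$ forces $\langle x,\mathbf{n}\rangle\equiv 0$, i.e. the hyperplane passes through the origin. Writing $\mathbb{R}^n=\ell\times\mathbb{R}^{n-1}$ with $\ell$ a line through the origin in $\mathbb{R}^2$ (a trivial complete self-expander curve) gives the conclusion in this case.

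Next, assume $H\not\equiv 0$, so the open set $\Omega=\{x\in\Sigma:H(x)\neq 0\}$ is nonempty; in particular $\Sigma$ is not a hyperplane, since on a hyperplane $H\equiv 0$. This is exactly the setting of Proposition \ref{pmf}, whose hypotheses (complete immersed self-expander hypersurface, not a hyperplane, nonnegative scalar curvature) are all met. The key observation is that a constant function attains a local minimum at every point of its domain, so $Scal_\Sigma$ trivially attains a local minimum on $\Omega$. Applying Proposition \ref{pmf} then yields $\Sigma=\Gamma\times\mathbb{R}^{n-1}$ with $\Gamma$ a complete nontrivial self-expander curve immersed in $\mathbb{R}^2$, completing the proof.

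I do not expect a serious analytic obstacle here: the substance lives entirely in Proposition \ref{pmf}, and the role of the constancy of $Scal_\Sigma$ is only to render the local-minimum condition vacuous. The one point requiring care is the degenerate case $H\equiv 0$, where I must invoke the nonnegativity of the scalar curvature (rather than Proposition \ref{pmf}) to force total geodesy and land on a hyperplane through the origin, which is itself of the product form $\Gamma\times\mathbb{R}^{n-1}$ with $\Gamma$ trivial.
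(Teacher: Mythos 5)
Your proposal is correct and follows exactly the paper's proof: split into the cases $H\equiv 0$ and $H\not\equiv 0$, handle the first directly (the paper simply asserts $\Sigma$ is a hyperplane through the origin, while you derive $|A|\equiv 0$ from $Scal_\Sigma\geq 0$, which is a fine and slightly more explicit route), and in the second case apply Proposition \ref{pmf}, noting that a constant scalar curvature trivially attains a local minimum on the nonempty set $\{H\neq 0\}$. No gaps; this is the paper's argument with the implicit steps spelled out.
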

\begin{proof}
	We consider two cases:\\
	(i) Case of $H\equiv0$. Obviously, $\Sigma$ is a hyperplane through the origin.\\
	(ii) Case of $H\neq0$.  By Proposition \ref{pmf} we conclude that $\Sigma=\Gamma\times\mathbb{R}^{n-1}$ where  $\Gamma$ is a nontrivial self-expander
	curve in $\mathbb{R}^2$. Combining these two cases completes the proof.
\end{proof}

 Now we prove Theorem \ref{scc6}. 
\begin{proof}[Proof of Theorem \ref{scc6}]
The classical Hilbert's theorem says that there is no complete surface immersed in $\mathbb{R}^3$ with negative constant scalar curvature. Hence  Theorem \ref{scc} implies Theorem \ref{scc6}.

In the case that $\Sigma$ is properly immersed, we may give an alternative direct proof without using Hilbert's theorem as follows.

First, we prove  that $Scal_\Sigma\geq0$.  Since $Scal_\Sigma=H^2-|A|^2$ is constant, we have
	\begin{align}\label{scc2}
		0=\nabla Scal_\Sigma=2H\nabla H-\nabla |A|^2
	\end{align}
	and, from  \eqref{esc},
	\begin{align}\label{scc3}
		Scal_\Sigma(2|A|^2+1)=2|\nabla H|^2-2|\nabla A|^2.
	\end{align}
	
	 Since $\Sigma$ is  properly immersed, there exists $p\in\Sigma$ which minimizes $|x|$. 
	At  the point $p$, we have 
	\begin{align}\label{scc1}
		\nabla H(p)=0.
	\end{align}  
	Choose a local orthonormal frame $\{e_1,e_2\}$ such that the coefficients of the second fundamental form are $h_{ij}(p)=\lambda_i\delta_{ij}$, for $i,j=1,2$.   By the definition,
	
	\begin{align*}
		|\nabla H|^2=(h_{111}+h_{221})^2+(h_{112}+h_{222})^2.
	\end{align*}
	This together with \eqref{scc1} implies
	\begin{align}\label{scc4}
		h_{111}=-h_{221}\hspace{1cm}\mbox{and }\hspace{1cm}	h_{222}=-h_{112}.
	\end{align}
	By \eqref{scc2} and \eqref{scc1},  we have $\nabla |A|^2=0$,  that is
	\begin{align*}
		h_{11}h_{111}+h_{22}h_{221}=h_{11}h_{112}+h_{22}h_{222}=0.
	\end{align*}
	Combining this with \eqref{scc4} yields
	\begin{align*}
		(h_{11}-h_{22})h_{111}=(h_{11}-h_{22})h_{222}=0.
	\end{align*}
	If $h_{11}=h_{22}$, we conclude that $Scal_\Sigma=H^2-|A|^2=|A|^2\geq0$.\\
	If $h_{111}=h_{222}=0$, then 
	\begin{align*}
		|\nabla A|^2=h^2_{111}+h^2_{222}+3h^2_{112}+3h^2_{221}=0. 
	\end{align*} 
 By \eqref{scc3}, we conclude that $Scal_{\Sigma}=0$  at $p$. \\
	Therefore,  we have that the constant  $Scal_\Sigma\geq0$.	 By Theorem \ref{scc}, we conclude that   $\Sigma=\Gamma\times\mathbb{R}$  with the product metric, where $\Gamma$ is a complete self-expander curve   immersed in $\mathbb{R}^2$.
\end{proof}
	Using an idea similar to the proof of the Theorem \ref{scc6}, we prove  Theorem \ref{sfc}.

	\begin{proof}[Proof of Theorem \ref{sfc}]
		Since $\Sigma$ is properly immersed, there exists $p\in\Sigma$ which minimizes $|x|$. At the point $p$, we have 
		\begin{align}\label{psa}
			\nabla H(p)=0.
		\end{align}
		Choose a local orthonormal frame $\{e_1,e_2\}$ such that the coefficients of the second fundamental form are $h_{ij}(p)=\lambda_i\delta_{ij}$, for $i,j=1,2$. By the  definition

		\begin{align*}
			|\nabla H|^2=(h_{111}+h_{221})^2+(h_{112}+h_{222})^2.
		\end{align*}
		This together with \eqref{psa} implies 
		\begin{align}\label{psa1}
			h_{111}=-h_{221}\hspace{1cm}\mbox{and }\hspace{1cm}	h_{222}=-h_{112}.
		\end{align}
		On the other hand, since $|A|$ is constant, we have 
		\begin{align}\label{psa3}
			h_{11}h_{111}+h_{22}h_{221}=h_{11}h_{112}+h_{22}h_{222}=0
		\end{align}
		and, by \eqref{Acon},
		\begin{align}\label{psa2}
			|\nabla A|^2=|A|^2(|A|^2+\frac12).
		\end{align}
		Combining \eqref{psa3} with \eqref{psa1} yields
		\begin{align*}
			(h_{11}-h_{22})h_{111}=(h_{11}-h_{22})h_{222}=0.
		\end{align*}
		If $h_{11}=h_{22}$, then $Scal_\Sigma=H^2-|A|^2=|A|^2\geq0$. This together with  hypothesis  $Scal_\Sigma\leq0$ implies that  $|A|=0$  at $p$. Since $|A|$ is constant, $|A|= 0$ on $\Sigma$.\\
		If $h_{111}=h_{222}=0$, then   
		\begin{align*}
			|\nabla A|^2=h^2_{111}+h^2_{222}+3h^2_{112}+3h^2_{221}=0.
		\end{align*} 
		By \eqref{psa2}, we conclude that $|A|=0$  at $p$. Therefore,  $|A|=0$ on $\Sigma$. Thus $\Sigma$ is a plane passing through the origin. 
\end{proof}

\section{Properties of stability operator $L$}\label{section-L}

 In this section, we discuss the  $L$-stability of self-expanders and estimate the bottom  spectrum of the $L$-stability operator  $ L=\mathcal{L}+|A|^2-\frac{1}{2}$.

\begin{proof}[Proof of Theorem \ref{staab}]
	Let $v=e^{-\frac{|x|^2}{4}}$. From \eqref{espn1} it follows that
	\begin{align}\label{spn2}
	\mathscr{L}v+(\frac{n}{2}+H^2)v=0.
	\end{align}
	Denote $w=\log(v)$. From \eqref{spn2}, we have 
	\begin{align*}
	\mathscr{L}w+|\nabla w|^2=-\frac{n}{2}-H^2.
	\end{align*}
	For any $\psi \in C^{\infty}_0(\Sigma)$, 
	\begin{align*}
	\frac{n}{2}\int_{\Sigma}\psi^2e^{\frac{|x|^2}{4}}+\int_{\Sigma} H^2 \psi^2e^{\frac{|x|^2}{4}}&=-\int_{\Sigma}|\nabla w|^2\psi^2e^{\frac{|x|^2}{4}}-\int_{\Sigma}\psi^2(\mathscr{L}w)e^{\frac{|x|^2}{4}}\\
	&=-\int_{\Sigma}|\nabla w|^2\psi^2 e^{\frac{|x|^2}{4}}+2\int_{\Sigma}\psi\langle \nabla \psi,\nabla w\rangle e^{\frac{|x|^2}{4}}\\
	&\leq\int_{\Sigma}|\nabla \psi|^2e^{\frac{|x|^2}{4}}.
	\end{align*}
	Therefore
	
	\begin{align}\nonumber
	\int_{\Sigma}\left( |\nabla \psi|^2-(|A|^2-\frac{1}{2})\psi^2\right) e^{\frac{|x|^2}{4}}&\geq\int_{\Sigma}\left( H^2-|A|^2+\frac{n+1}{2}\right)\psi^2e^{\frac{|x|^2}{4}}\\\nonumber
	&=\int_{\Sigma}\left(Scal_{\Sigma}+\frac{n+1}{2}\right)\psi^2e^{\frac{|x|^2}{4}}\\\label{staab3}
	&\geq \left( \frac{n+1}{2}+\inf_{x\in \Sigma}Scal_{\Sigma}\right)\int_{\Sigma}\psi^2e^{\frac{|x|^2}{4}}. 
	\end{align}
	Hence 
	\begin{align}\label{spn3}
	\mu_1\geq \frac{n+1}{2}+\inf_{x\in \Sigma}Scal_{\Sigma}.
	\end{align}
	Now we  have assumption that 	 $\Sigma$ is proper, satisfies  $|H|(x)\leq a|x|+b$, $x\in\Sigma$, for some constants $0\leq a<\frac{1}{2\sqrt{2}}$ and $b>0$, and has constant scalar curvature
	 $Scal_{\Sigma}=\inf_{x\in \Sigma}Scal_{\Sigma}$. 
	 
	By Inequality \eqref{bee1} in Theorem \ref{staab2}, which will be proved later in this paper, we have
\begin{align}\label{spn5-1}
	\mu_1\leq\frac{n+1}{2}+\frac{\int_{\Sigma}Scal_{\Sigma} e^{-\frac{|x|^2}{4}}}{\int_{\Sigma}e^{-\frac{|x|^2}{4}}}=\frac{n+1}{2}+\inf_{x\in \Sigma}Scal_{\Sigma}.
	\end{align}

	Inequalities  \eqref{spn3} and  \eqref{spn5-1} induce $\mu_1=\frac{n+1}{2}+Scal_{\Sigma}$ and hence the equality in \eqref{spn3} holds

\end{proof}

 Theorem \ref{staab} induces Corollary \ref{cor-stab} directly. Now we prove  the following Theorem \ref{staab2-1}, whose proof is similar to  that of Theorem \ref{thm-3-hyper-1}.
\begin{theorem} \label{staab2-1} Let $\Sigma$ be  a complete properly immersed self-expander hypersurface in $\mathbb{R}^{n+1}$. Assume that its mean curvature $H$ satisfies  $|H|(x)\leq a|x|+b$, $x\in\Sigma$,  for some constants  $0\leq a<\frac{1}{2}$ and $b>0$.   Then the bottom $\mu_1$ of the spectrum of the $L$-stability operator  $L$ satisfies
	\begin{align}\label{spn4}
	\mu_1
	\leq& \frac{(\alpha+1) n+2}{4}+\frac{\int_{\Sigma}\left[Scal_{\Sigma}+(\alpha-1)\big( \frac12H^2-\frac{(\alpha+1)}{16}|x^T|^2\big)\right]  e^{-\frac{\alpha}{4}|x|^2}d\sigma}{\int_{\Sigma}e^{-\frac{\alpha}{4}|x|^2}d\sigma}
	\end{align}
	for all $\alpha>\frac{a^2}{\frac{1}{4}-a^2}$.
\end{theorem}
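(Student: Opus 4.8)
The plan is to run the argument of Theorem~\ref{thm-3-hyper-1} essentially verbatim, but with the variational characterization of the bottom $\mu_1$ of the $L$-stability operator in place of that of $\lambda_1$. Recall from \eqref{f-stable-ine} that for every $\phi\in C_0^\infty(\Sigma)$,
\[
\mu_1\int_\Sigma\phi^2 e^{\frac{|x|^2}{4}}d\sigma\leq\int_\Sigma\Bigl(|\nabla\phi|^2-(|A|^2-\tfrac12)\phi^2\Bigr)e^{\frac{|x|^2}{4}}d\sigma.
\]
First I would fix $\alpha>\frac{a^2}{\frac14-a^2}=\frac{4a^2}{1-4a^2}$ and take the same ground-state candidate $v=e^{-\frac{\alpha+1}{8}|x|^2}$ together with the cut-off functions $\varphi_j$ from Theorem~\ref{thm-3-hyper-1}, and substitute $\phi=\varphi_j v$. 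Because $\alpha$ obeys this restriction, Lemma~\ref{spn9} gives $v\in W^{1,2}(\Sigma,e^{\frac{|x|^2}{4}}d\sigma)$, so the gradient part can be treated exactly as before.

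The core computation is then identical. Letting $j\to\infty$, the convergence $\varphi_j v\to v$ in $W^{1,2}$ yields $\int_\Sigma|\nabla(\varphi_j v)|^2 e^{\frac{|x|^2}{4}}\to\int_\Sigma|\nabla v|^2 e^{\frac{|x|^2}{4}}$, and the integration-by-parts identity \eqref{nabla-v} rewrites the limit as $\int_\Sigma v(-\mathcal{L}v)e^{\frac{|x|^2}{4}}$. In codimension one $|{\bf H}|^2=H^2$, so \eqref{espn1} gives
\[
-\mathcal{L}v=\Bigl(\frac{(\alpha+1)n}{4}+\frac{\alpha+1}{2}H^2-\frac{(\alpha+1)(\alpha-1)}{16}|x^T|^2\Bigr)v,
\]
while $v^2 e^{\frac{|x|^2}{4}}=e^{-\frac{\alpha}{4}|x|^2}$. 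Feeding these into the variational inequality and using $Scal_\Sigma=H^2-|A|^2$, the constant collapses to $\frac{(\alpha+1)n+2}{4}$ and the remaining integrand reorganizes as $Scal_\Sigma+(\alpha-1)\bigl(\tfrac12H^2-\tfrac{\alpha+1}{16}|x^T|^2\bigr)$, which is precisely \eqref{spn4}. The finite pieces $H^2 e^{-\frac{\alpha}{4}|x|^2}$ and $|x^T|^2 e^{-\frac{\alpha}{4}|x|^2}$ are integrable by Lemma~\ref{spn9} and Theorem~\ref{ineq5}, since $|H|\le\frac12|x|$ and $|x^T|\le|x|$.

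The one genuinely new point, and the step I expect to be the main obstacle, is the potential term $\int_\Sigma(|A|^2-\tfrac12)\varphi_j^2 v^2 e^{\frac{|x|^2}{4}}$: unlike the $H^2$ and $|x^T|^2$ terms there is no a~priori reason for $|A|^2 e^{-\frac{\alpha}{4}|x|^2}$ to be integrable. I would avoid dominated convergence and argue by monotonicity instead. For each $j$ the inequality $\mu_1\le\mathrm{RHS}(j)$ holds with all integrals finite; since $|A|^2\ge0$ and $\varphi_j^2\nearrow1$, the monotone convergence theorem gives $\int_\Sigma|A|^2\varphi_j^2 v^2 e^{\frac{|x|^2}{4}}\nearrow\int_\Sigma|A|^2 v^2 e^{\frac{|x|^2}{4}}\in[0,+\infty]$. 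If this limit is finite, every term passes to the limit and \eqref{spn4} follows. If it is $+\infty$, then $\mathrm{RHS}(j)\to-\infty$ (the finite $H^2$ and $|x^T|^2$ contributions cannot compensate), whence $\mu_1\le\liminf_j\mathrm{RHS}(j)=-\infty$; thus \eqref{spn4} holds in the extended reals in either case, consistent with the possibility $\mu_1=-\infty$ flagged in Theorem~\ref{staab2}.
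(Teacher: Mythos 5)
Your proposal is correct and takes essentially the same approach as the paper: the same test functions $\varphi_j v$ with $v=e^{-\frac{\alpha+1}{8}|x|^2}$, the identity \eqref{espn1}, the integrability Lemmas \ref{spn9} and \ref{spn10}, and the monotone convergence theorem to control the $|A|^2$ term. The only difference is organizational: the paper dismisses the case $\mu_1=-\infty$ at the outset and, assuming $\mu_1>-\infty$, deduces $\int_\Sigma|A|^2e^{-\frac{\alpha}{4}|x|^2}d\sigma<\infty$ from the boundedness of the truncated inequalities, whereas you split on the finiteness of $\int_\Sigma|A|^2e^{-\frac{\alpha}{4}|x|^2}d\sigma$ and recover $\mu_1=-\infty$ in the divergent case --- the same dichotomy read in the contrapositive direction.
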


\begin{proof}
	$\mu_1$ may be finite or $-\infty$. If $\mu_1$ is $-\infty$, the inequality \eqref{spn4} holds. Suppose $\mu_1>-\infty$. From the variational characterization of $\mu_1$, we have that for any  $\psi\in C^{\infty}_0(\Sigma)$,
	\begin{align}\label{staa2}
	\mu_1\int_{\Sigma}\psi^2e^{\frac{|x|^2}{4}}\leq\int_{\Sigma}|\nabla\psi|^2e^{\frac{|x|^2}{4}}-\int_{\Sigma}(|A|^2-\frac{1}{2})\psi^2e^{\frac{|x|^2}{4}},
	\end{align}
	Let $v=e^{-\frac{\alpha+1}{8}|x|^2}$, where $\alpha$ is any  constant satisfying $\alpha>\frac{a^2}{\frac{1}{4}-a^2}$. Recall \eqref{espn1},  i.e.,
	\begin{align}\label{se7}
	\mathscr{L}v+\left( \frac{(\alpha+1) n}{4}+\frac{\alpha+1}{2} H^2-\frac{(\alpha+1) (\alpha-1)}{16}|x^T|^2\right)v=0\hspace{0.2cm}\mbox{on}\hspace{0.2cm}\Sigma. 
	\end{align}
	 Lemmas \ref{spn9} and \ref{spn10} state that $v\in W^{1,2}(\Sigma,e^{\frac{|x|^2}{4}}d\sigma)$ and $\int_{\Sigma}H^2e^{-\frac{\alpha}{4}|x|^2}<\infty$. \\
	Choose $\psi=\varphi_j v$, where $\varphi_j$ are the nonnegative cut-off functions   satisfying  that  $\varphi_j$ is $1$ on $B_j(0)$,  $|\nabla\varphi_j|\leq 1$ on $B_{j+1}(0)\setminus B_j(0)$, and $\varphi=0$ on $\Sigma\setminus B_{j+1}(0)$.  Substitute $\psi$ in \eqref{staa2}:
	\begin{align}\label{staa3}
	\mu_1\int_{\Sigma}\varphi_j^2v^2e^{\frac{|x|^2}{4}}\leq\int_{\Sigma}|\nabla(\varphi_j v)|^2e^{\frac{|x|^2}{4}}-\int_{\Sigma}(|A|^2-\frac{1}{2})\varphi_j^2v^2e^{\frac{|x|^2}{4}}.
	\end{align}
	Note  $\mu_1>-\infty$. Letting $j\rightarrow \infty$ in  \eqref{staa3} and using  the monotone convergence theorem, \eqref{staa3}  implies that  $\int_{\Sigma}|A|^2e^{-\frac{\alpha}{4}|x|^2}<\infty$ and  
	\begin{align}\label{staa4}
	\mu_1\int_{\Sigma}v^2e^{\frac{|x|^2}{4}}&\leq\int_{\Sigma}|\nabla v|^2e^{\frac{|x|^2}{4}}-\int_{\Sigma}(|A|^2-\frac{1}{2})v^2e^{\frac{|x|^2}{4}}\nonumber \\\nonumber
		=\int_{\Sigma}&v
		(-\mathcal{L}v)e^{\frac{|x|^2}{4}}-\int_{\Sigma}(|A|^2-\frac{1}{2})v^2e^{\frac{|x|^2}{4}}\\\nonumber
	=\int_{\Sigma}&\left( \frac{(\alpha+1) n}{4}+\frac{\alpha+1}{2} H^2-\frac{(\alpha+1)(\alpha-1)}{16}|x^T|^2\right) v^2e^{\frac{|x|^2}{4}}\\ \nonumber
	-&\int_{\Sigma}(|A|^2-\frac{1}{2})v^2e^{\frac{|x|^2}{4}}\\
	=\int_{\Sigma}&\left[\frac{(\alpha+1) n+2}{4}+Scal_{\Sigma}+(\alpha-1)\bigg(\frac{H^2}2-\frac{\alpha+1 }{16}|x^T|^2\bigg)\right] v^2e^{\frac{|x|^2}{4}}.
	\end{align}
	Therefore 
	\begin{align}
	\mu_1
	\leq& \frac{(\alpha+1) n+2}{4}+\frac{\int_{\Sigma}\left[Scal_{\Sigma}+(\alpha-1)\big( \frac12H^2-\frac{(\alpha+1)}{16}|x^T|^2\big)\right]  e^{-\frac{\alpha}{4}|x|^2}}{\int_{\Sigma}e^{-\frac{\alpha}{4}|x|^2}}.
	\end{align}
	
\end{proof}

Theorem \ref{staab2-1} implies Theorem \ref{staab2} as follows:
\begin{proof}[Proof of Theorem \ref{staab2}]  The proof is similar to that of Theorem \ref{thm-3-hyper}.
Since $a<\frac{1}{2\sqrt{2}}$, we can   take $\alpha=1$ in Theorem \ref{staab2-1}.  Then 
	\begin{align}\label{spn5}
	\mu_1\leq\frac{n+1}{2}+\frac{\int_{\Sigma}Scal_{\Sigma} e^{-\frac{|x|^2}{4}}}{\int_{\Sigma}e^{-\frac{|x|^2}{4}}}.
	\end{align}
	If the equality in \eqref{spn5} holds, \eqref{staa4} with $\alpha=1$ becomes the equality. Namely,
	\begin{align}\label{bottom}
	\mu_1\int_{\Sigma}v^2e^{\frac{|x|^2}{4}}=\int_{\Sigma}|\nabla v|^2e^{\frac{|x|^2}{4}}-\int_{\Sigma}(|A|^2-\frac{1}{2})v^2e^{\frac{|x|^2}{4}}.
	\end{align}
	
	Thus  $\mu_1$ is attained on the function $v=e^{-\frac14|x|^2}\in W^{1,2}(\Sigma,e^{\frac{|x|^2}{4}}d\sigma)$.   One property of the bottom of spectrum (that can be proved similar to \cite[Theorem 10.10]{G})  induces that $v=e^{-\frac14|x|^2}$ must satisfies the equation
	\begin{align}\label{eq-v-2}
	\mathscr{L}v+(|A|^2-\frac{1}{2})v+\mu_1v=0.
	\end{align}
	Taking $\alpha=1$ in \eqref{se7}, we have 
	\begin{align}\label{se7-1}
\mathscr{L}v+\left( \frac{n}{2}+ H^2\right)v=0\hspace{0.2cm}\mbox{on}\hspace{0.2cm}\Sigma. 
\end{align}
By\eqref{eq-v-2} and \eqref{se7-1}, it follows  that  
$$Scal_{\Sigma}=H^2-|A|^2=\mu_1-\frac{n+1}{2}=constant.$$
Conversely, if  $Scal_{\Sigma}=constant$, by \eqref{spn5} and Theorem \ref{staab} it follows that 
	\begin{align*}
		\mu_1=Scal_{\Sigma}+\frac{n+1}{2}.
\end{align*}
	\end{proof}
	
In order to prove Corollary \ref{corr1},  we need  the following known  fact (see Theorem 3.20 in \cite{GSSZ}). For the convenience of  the readers, we include its proof here.
\begin{lemma}\label{cur3}
	 Let  $\gamma$ be a complete  immersed self-expander curve in $\R^2$ whose geodesic curvature $H$ satisfies $H=-\frac{1}{2}\langle x, {\bf n}\rangle$. Then either $\gamma$ is a straight line through the origin or $H^2$ is positive and bounded.
\end{lemma}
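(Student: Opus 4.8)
The plan is to reduce the geometry to a planar autonomous ODE system for the curvature and then extract a conserved quantity. Parametrize $\gamma$ by arc length $s$, let $T$ be the unit tangent and ${\bf n}$ the unit normal, so that the structure equations read $T'=-H{\bf n}$ and ${\bf n}'=HT$ (consistent with ${\bf H}=A(T,T)=-H{\bf n}$). Introduce the tangential support function $p:=\langle x,T\rangle$. Differentiating the self-expander relation $H=-\tfrac12\langle x,{\bf n}\rangle$ (equivalently $\langle x,{\bf n}\rangle=-2H$) along $\gamma$ and using the structure equations, I would obtain the closed first-order system
\begin{align*}
H'=-\tfrac12\,Hp,\qquad p'=1+2H^2.
\end{align*}
As a consistency check, differentiating the first equation and substituting the second recovers $\mathcal{L}H=-(H^2+\tfrac12)H$, which is \eqref{eH-1} in the curve case where $|A|^2=H^2$.

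Next I would establish the dichotomy. The system above is autonomous in the phase plane $(H,p)$ with smooth (polynomial) right-hand side, so its solutions are unique. The line $\{H=0\}$ is invariant: if $H(s_0)=0$ then $H'(s_0)=0$, and hence $H\equiv 0$ by uniqueness. In that case $\gamma$ is a geodesic of $\R^2$, i.e.\ a straight line, and $\langle x,{\bf n}\rangle=-2H\equiv 0$ forces it to pass through the origin. Otherwise $H$ never vanishes, so $H^2>0$ everywhere on $\gamma$, which gives the first alternative of the claim; since $\gamma$ is complete, the system holds for all $s$ along the connected curve.

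Finally, for the upper bound I would produce a first integral. Setting
\begin{align*}
E:=\ln|H|+H^2+\tfrac14\,p^2,
\end{align*}
a direct differentiation using $H'=-\tfrac12Hp$ and $p'=1+2H^2$ gives $E'=0$, so $E$ is a finite constant along $\gamma$. Since $\tfrac14 p^2\ge 0$, this yields $\ln|H|+H^2\le E$; because the function $t\mapsto \ln t+t^2$ tends to $+\infty$ as $t\to\infty$, the inequality confines $|H|$ to a bounded interval, whence $H^2$ is bounded. I expect the only real subtlety to be the discovery and verification of the conserved quantity $E$ (the logarithmic term is precisely what makes $H'/H$ integrate cleanly against $p\,p'$); once $E$ is in hand, both the positivity and the boundedness of $H^2$ follow immediately from the phase-plane picture, so that step is the crux of the argument.
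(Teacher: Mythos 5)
Your proof is correct, and its engine is the same conservation law the paper uses, reached by a mildly different route. The paper's proof is a two-line direct computation: from $2H'=-H\langle x,\gamma'\rangle$ and $(|x|^2)'=2\langle x,\gamma'\rangle$ it gets $\bigl(H^2e^{|x|^2/2}\bigr)'=0$, hence $H^2=Ce^{-|x|^2/2}$, and both alternatives drop out at once ($C=0$ gives $H\equiv 0$ and a line through the origin; $C>0$ gives $0<H^2\le C$). Your first integral is that same quantity in disguise: since $|x|^2=\langle x,T\rangle^2+\langle x,{\bf n}\rangle^2=p^2+4H^2$, one has $E=\ln|H|+H^2+\tfrac14 p^2=\tfrac12\ln\bigl(H^2e^{|x|^2/2}\bigr)$. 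What you do differently is close the system by differentiating $p$ (obtaining $p'=1+2H^2$) instead of differentiating $|x|^2$, which makes the system autonomous in the $(H,p)$-plane; that buys you a clean ODE-uniqueness argument for the dichotomy (invariance of the line $\{H=0\}$), a point the paper gets for free from the explicit formula but never spells out. The cost is that your bound on $|H|$ is implicit ($|H|\le f^{-1}(E)$ with $f(t)=\ln t+t^2$) where the paper's is explicit ($H^2\le C$), and your $E$ is only defined where $H\ne 0$, so the order of your steps (dichotomy first, then the first integral) is essential, whereas the paper's formula $H^2=Ce^{-|x|^2/2}$ delivers positivity and boundedness simultaneously. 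Both arguments are complete; yours is a legitimate, slightly longer phase-plane repackaging of the same identity.
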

\begin{proof} Without lost of generality, we assume that $\gamma$ is parametrized by its arc-length. 
	 Differentiating   $H=-\frac{1}{2}\langle x, {\bf n}\rangle$ and noting that $\nabla_{\gamma'}{\bf n}=H\gamma'$  yield
	\begin{align}\label{cur1}
	2H'=-H\langle x,\gamma'\rangle.
	\end{align}
	On the other  hand,  differentiating $|x|^2$, we get 
	\begin{align}\label{cur2}
	(|x|^2)'=2\langle \gamma',x\rangle.
	\end{align}
	Combining \eqref{cur1} and \eqref{cur2} it follows that 
	\begin{align*}
	e^{\frac{-|x|^2}{2}}\left( H^2e^{\frac{|x|^2}{2}}\right)' =2HH'+\frac{1}{2}H^2\left(|x|^2 \right)'=-H^2\langle x,\gamma'\rangle+H^2\langle x,\gamma'\rangle=0.
	\end{align*}
	Therefore $H^2=Ce^{-\frac{|x|^2}{2}}$, for some constant $C$. This completes the proof. 
\end{proof}

With Lemma \ref{cur3} we prove Corollary \ref{corr1}.

\begin{proof}[Proof of Corollary \ref{corr1}]  Since $\Gamma$ is a complete immersed self-expander in $\mathbb{R}^2$, it is properly embedded by Theorem 6.1 in \cite{Hall}.
	Since  $\Sigma=\Gamma\times\R^{n-1}$,  $Scal_{\Sigma}\equiv 0$.  By Lemma \ref{cur3}, the mean curvature of $\Sigma$ is bounded. Hence, Theorems \ref{staab} and \ref{staab2}  imply that
	$\mu_1=\frac{n+1}{2}$. 
\end{proof}

Theorems  \ref{staab2} and   \ref{scc}   imply
\begin{cor}\label{corr2}
	Let $\Sigma$ be a complete properly immersed self-expander in $\R^{n+1}$. Assume that $Scal_{\Sigma}\geq0$ and $|H|(x)\leq a|x|+b$, $x\in\Sigma$, for some constants $0\leq a<\frac{1}{2\sqrt{2}}$ and $b>0$. Then 
	\begin{align}\label{spn6}
		\mu_1\leq\frac{n+1}{2}+\frac{\int_{\Sigma}Scal_{\Sigma} e^{-\frac{|x|^2}{4}}d\sigma}{\int_{\Sigma}e^{-\frac{|x|^2}{4}}d\sigma}.
	\end{align}
	The equality holds if and only if $\Sigma=\Gamma\times\R^{n-1}$  with the product metric,  where $\Gamma$ is a  complete self-expander curve (properly) immersed in $\R^2$.
\end{cor}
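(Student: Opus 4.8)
The plan is to derive Corollary \ref{corr2} directly from Theorems \ref{staab2} and \ref{scc}, treating the inequality and the equality case separately. The inequality \eqref{spn6} is nothing but \eqref{bee1}: since $\Sigma$ is a complete properly immersed self-expander hypersurface in $\R^{n+1}$ with $|H|(x)\le a|x|+b$ for constants $0\le a<\frac{1}{2\sqrt 2}$ and $b>0$, the hypotheses of Theorem \ref{staab2} hold verbatim, so $\mu_1\le\frac{n+1}{2}+\frac{\int_\Sigma Scal_\Sigma e^{-|x|^2/4}d\sigma}{\int_\Sigma e^{-|x|^2/4}d\sigma}$. First I would simply quote this.

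Next I would handle the forward direction of the equality. The extra hypothesis $Scal_\Sigma\ge 0$ is exactly what links the two theorems: it forces the right-hand side of \eqref{spn6} to be at least $\frac{n+1}{2}$, so if equality holds then $\mu_1\ge\frac{n+1}{2}>-\infty$. With $\mu_1>-\infty$ secured, the rigidity half of Theorem \ref{staab2} applies and yields that $Scal_\Sigma$ is constant. Combined with $Scal_\Sigma\ge 0$, this says $\Sigma$ has nonnegative constant scalar curvature, so Theorem \ref{scc} gives $\Sigma=\Gamma\times\R^{n-1}$ with the product metric, where $\Gamma$ is a complete self-expander curve (properly) immersed in $\R^2$.

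For the converse I would start from $\Sigma=\Gamma\times\R^{n-1}$ and observe that its only possibly nonzero principal curvature is the geodesic curvature of $\Gamma$, whence $|A|^2=H^2$ and $Scal_\Sigma=H^2-|A|^2\equiv 0$; in particular $Scal_\Sigma$ is constant. Then either the converse direction of the equality statement in Theorem \ref{staab2}, or more directly Corollary \ref{corr1} (which gives $\mu_1=\frac{n+1}{2}$ with no hypothesis on the mean curvature), shows that $\mu_1=\frac{n+1}{2}$ coincides with the right-hand side of \eqref{spn6}, whose integral term vanishes since $Scal_\Sigma=0$. This closes the equivalence.

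I expect no serious obstacle here, as the corollary is essentially a bookkeeping combination of results already established. The one point requiring care is ensuring that the finiteness condition $\mu_1>-\infty$ needed to invoke the rigidity part of Theorem \ref{staab2} is actually available; this is precisely where the sign condition $Scal_\Sigma\ge 0$ enters, guaranteeing that equality can only occur with $\mu_1\ge\frac{n+1}{2}$. I would also double-check the trace and sign conventions when verifying $Scal_\Sigma\equiv 0$ for the product $\Gamma\times\R^{n-1}$.
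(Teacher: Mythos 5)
Your proposal is correct and follows essentially the same route as the paper, which derives the corollary precisely by combining Theorem \ref{staab2} (for the inequality and the rigidity statement that equality forces $Scal_{\Sigma}$ constant) with Theorem \ref{scc} (to pass from nonnegative constant scalar curvature to the product structure), and conversely noting $Scal_{\Sigma}\equiv 0$ for $\Gamma\times\R^{n-1}$. Your explicit observation that $Scal_{\Sigma}\geq 0$ guarantees $\mu_1>-\infty$ (needed to invoke the rigidity half of Theorem \ref{staab2}) is a detail the paper leaves implicit, and it is handled correctly.
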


\begin{bibdiv}
\begin{biblist}

\bib{AIC}{article}{
   author={Angenent, S.},
   author={Ilmanen, T.},
   author={Chopp, D. L.},
   title={A computed example of nonuniqueness of mean curvature flow in $\mathbb{R}^3$},
   journal={Comm. Partial Differential Equations},
   volume={20},
   date={1995},
   number={},
   pages={1937-1958},
   issn={},
   review={},
   doi={},
}

\bib{AM}{article}{
	author={Ancari, Saul},
	author={Miranda, Igor},
	title={Volume estimates and classification theorem for constant weighted mean curvature hypersurfaces},
	journal={The Journal of Geometric Analysis },
	pages={ https://doi.org/10.1007/s12220-020-00413-2},
	year={2020}
	
}

\bib{BW1}{article}{
	author={Bernstein, Jacob },
	author={Wang, Lu},
	title={The space of asymptotically conical self-expanders of mean curvature flow},
	journal={arXiv:1712.04366v2 [math.DG]},
	 volume={},
   date={},
   number={},
   pages={},
	
}		

\bib{BW2}{article}{
	author={Bernstein, Jacob },
	author={Wang, Lu},
	title={Smooth compactness for spaces of asymptotically conical self-expanders of mean
curvature flow},
    journal={International Mathematics Research Notices},
	 volume={},
   date={},
   year={2019},
   number={},
   pages={doi:10.1093/imrn/rnz087},
	
}

\bib{CH}{article}{
	author={Cao, Huai-Dong },
	author={Li, Haizhong},
	title={A gap theorem for self-shrinkers of the mean curvature flow in arbitrary codimension},
	journal={Calculus of Variations and Partial Differential Equations },
	 volume={46},
   date={2013},
   number={3-4},
   pages={879-889},
	
}

\bib{CMZ3}{article}{
   author={Cheng, Xu},
   author={Mejia, Tito},
   author={Zhou, Detang},
   title={Stability and compactness for complete $f$-minimal surfaces},
   journal={Trans. Amer. Math. Soc.},
   volume={367},
   date={2015},
   number={6},
   pages={4041--4059},
   issn={0002-9947},
   review={\MR{3324919}},
   doi={10.1090/S0002-9947-2015-06207-2},
}
		
\bib{CMZ}{article}{
   author={Cheng, Xu},
   author={Mejia, Tito},
   author={Zhou, Detang},
   title={Eigenvalue estimate and compactness for closed $f$-minimal
   surfaces},
   journal={Pacific J. Math.},
   volume={271},
   date={2014},
   number={2},
   pages={347--367},
   issn={0030-8730},
   review={\MR{3267533}},
   doi={10.2140/pjm.2014.271.347},
}

\bib{CVZ}{article}{
	title={Volume growth of complete submanifolds in gradient Ricci Solitons with bounded weighted mean curvature},
	author={Cheng, Xu},
	author={Vieira, Matheus},
	author={Zhou, Detang},
	journal={International Mathematics Research Notices},
	year={2019}
}

\bib{CZ}{article}{
   author={Cheng, Xu},
   author={Zhou, Detang},
   title={Volume estimate about shrinkers},
   journal={Proc. Amer. Math. Soc.},
   volume={141},
   date={2013},
   number={2},
   pages={687--696},
   issn={0002-9939},
   review={\MR{2996973}},
   doi={10.1090/S0002-9939-2012-11922-7},
}

\bib{cheng2018}{article}{
	title={Spectral properties and rigidity for self-expanding solutions of the mean curvature
	flows},
	author={Cheng, Xu},
	author={Zhou, Detang},
	journal={Mathematische Annalen},
	volume={371},
	number={1-2},
	pages={371--389},
	year={2018},
	publisher={Springer}
}

\bib{D}{article}{
   author={Ding, Qi},
   title={Minimal cones and self-expanding solutions for mean curvature
   flows},
   journal={Math. Ann.},
   volume={376},
   date={2020},
   number={1-2},
   pages={359--405},
   issn={0025-5831},
   review={\MR{4055164}},
   doi={10.1007/s00208-019-01941-1},
}

\bib{EH}{article}{
   author={Ecker, Klaus},
   author={Huisken, Gerhard},
   title={Mean curvature evolution of entire graphs},
   journal={Ann. of Math.   2nd Ser.},
   volume={130},
   date={1989},
   number={3},
   pages={453-471},
   issn={},
   review={},
   doi={},
}

\bib{FM}{article}{
   author={Fong, Frederick Tsz-Ho},
   author={McGrath, Peter},
   title={Rotational symmetry of asymptotically conical mean curvature flow
   self-expanders},
   journal={Comm. Anal. Geom.},
   volume={27},
   date={2019},
   number={3},
   pages={599--618},
   issn={1019-8385},
   review={\MR{4003004}},
   doi={10.4310/CAG.2019.v27.n3.a3},
}

\bib{G}{book}{
   author={Alexander Grigoryan },
   title={Heat Kernel and Analysis on Manifolds	},
   language={English},
   publisher={American Mathematical Soc.},
   date={2009},
   pages={},
   review={},
}

\bib{GSSZ}{article}{
   author={Groh, Konrad },
   author={Schwarz, Matthias},
   author={Smoczyk, Knut},
   author={Zehmisch, Kai},
   title={ Mean curvature flow of monotone Lagrangian submanifolds,},
   journal={Math. Z.},
   volume={257},
   date={2007},
   number={2},
   pages={295-327},
   issn={},
   review={},
   doi={},
}

\bib{Hall}{article}{
   author={Halldorsson, Hoeskuldur P.},
   title={Self-similar solutions to the curve shortening
flow},
   journal={Trans. Amer. Math. Soc.},
   volume={364},
   date={2012},
   number={10},
   pages={5285–5309},
   issn={},
   review={},
   doi={},
}

\bib{I}{article}{
   author={Ilmanen,T},
   title={Lectures on Mean Curvature Flow and Related Equations (Trieste Notes),},
   journal={},
   volume={},
   date={1995},
   number={},
   pages={},
   issn={},
   review={},
   doi={},
}

\bib{Is}{article}{
   author={Ishimura, Naoyuki},
   title={Curvature evolution of plane curves with prescribed
opening angle},
   journal={Bull. Austral. Math. Soc.},
   volume={52},
   date={1995},
   number={2},
   pages={287–296},
   issn={},
   review={},
   doi={},
}

\bib{RS}{book}{
   author={Reed, Michael},
   author={Simon, Barry},
   title={Methods of modern mathematical physics. IV. Analysis of operators},
   publisher={Academic Press [Harcourt Brace Jovanovich, Publishers], New
   York-London},
   date={1978},
   pages={xv+396},
   isbn={0-12-585004-2},
   review={\MR{0493421}},
}

\bib{S}{article}{
   author={Stavrou, Nikolaos},
   title={Selfsimilar solutions to the mean curvature flow},
   journal={J. reine angew. Math.},
   volume={499},
   date={1998},
   number={},
   pages={189--198},
   issn={},
   review={},
   doi={},
}

\bib{smoczyk2020self}{article}{
	title={Self-expanders of the mean curvature flow},
	author={Smoczyk, Knut},
	journal={arXiv preprint arXiv:2005.05803},
	year={2020}
}

\end{biblist}
\end{bibdiv}

\end{document}